\documentclass[10pt]{article}

\usepackage{enumerate}
\usepackage{enumitem}
\usepackage{amsmath}
\usepackage{float}
\usepackage{bbm}
\usepackage{comment}
\newcommand{\ifims}[2]{#1} 
\newcommand{\ifAMS}[3]{#3}   
\newcommand{\ifau}[3]{#2}  
\newcommand{\ifbook}[2]{#1}   

\ifbook{

}
{

}

\def\thetitle{Uniform Hanson-Wright type concentration inequalities for unbounded entries via the entropy method}
\def\theruntitle{Uniform Hanson-Wright}

\def\theabstract{
	This paper is devoted to uniform versions of the Hanson-Wright inequality for a random vector $X \in \mathbb{R}^n$ with independent subgaussian components. The core technique of the paper is based on the entropy method combined with truncations of both gradients of {functions} of interest and of the components of $X$ itself. 
	Our results recover, in particular, the classic uniform bound of \cite{Talagrand96} for Rademacher chaoses and the more recent uniform result of \cite{Adam15} which holds under certain rather strong assumptions on the distribution of $X$. 
	We provide several applications of our techniques: we establish a version of the standard Hanson-Wright inequality, which is tighter in some regimes.
	Extending our results we show a version of {the} dimension-free matrix Bernstein inequality that holds for random matrices with a subexponential spectral norm.
	We apply the derived inequality to the problem of covariance estimation with missing observations and prove an almost optimal high probability version of the recent result of \cite{Lounici14}.
	Finally, we show a uniform Hanson-Wright-type inequality in the Ising model under Dobrushin's condition.
	A closely related question was posed by \cite{Marton03}.
}
\def\kwdp{60E15}
\def\kwds{65F05}

\def\thekeywords{concentration inequalities, modified logarithmic Sobolev inequalities, uniform Hanson-Wright inequalities, Rademacher chaos, matrix Bernstein inequality}

\def\thanksa{Financial support from the German Research Foundation (DFG) via the International Research Training Group 1792 ``High Dimensional Nonstationary Time Series'' in Humboldt-Universit\"at zu Berlin is gratefully acknowledged.}
\def\thanksb{Parts of this work were done while the author was a postdoctoral fellow at Technion. Nikita Zhivotovskiy was supported by RSF grant No. 18-11-00132.}
\def\authora{Yegor Klochkov}
\def\authorb{Nikita Zhivotovskiy}
\def\runauthora{Y. Klochkov }
\def\runauthorb{N. Zhivotovskiy }
\def\addressa{
	Humboldt Universit\"at zu Berlin
}
\def\emaila{klochkoy@hu-berlin.de}
\def\addressb{
	Higher School of Economics, Now at Google Research, Brain Team.
}
\def\emailb{nikita.zhivotovskiy@phystech.edu}

\usepackage{amsmath,amssymb,amsthm}
\usepackage{natbib}
\usepackage{epsfig,graphicx}
\usepackage{comment}
\usepackage{color}
\usepackage{srcltx}
\usepackage[mathscr]{eucal}
\usepackage[math]{easyeqn}
\usepackage{etoolbox}
\usepackage{hyperref}
\PassOptionsToPackage{linktocpage}{hyperref}


\definecolor{blue(pigment)}{rgb}{0.2, 0.2, 0.6}
\definecolor{ultramarine}{rgb}{0.07, 0.04, 0.56}
\definecolor{darkspringgreen}{rgb}{0.09, 0.45, 0.27}
\definecolor{hookersgreen}{rgb}{0.0, 0.44, 0.0}
\definecolor{plum(traditional)}{rgb}{0.56, 0.27, 0.52}
\definecolor{purple(html/css)}{rgb}{0.5, 0.0, 0.5}
\definecolor{magenta(dye)}{rgb}{0.79, 0.08, 0.48}

\hypersetup{
	colorlinks,
	linkcolor=hookersgreen,
	linktoc=blue,
	citecolor=ultramarine,
	urlcolor=black,
	filecolor=black
}
\usepackage{nameref}

\ifims{
	\textheight=23cm
	\textwidth=14.8cm
	\topmargin=0pt
	\oddsidemargin=0.5cm
	\evensidemargin=0.5cm
	\linespread{1.0}
	\renewenvironment{abstract}
	{\centerline{\textbf{Abstract}}\bigskip
		\begin{center}
			\begin{minipage}{11cm}
				\begin{small}
					
				}
				{   \end{small}
			\end{minipage}
		\end{center}
		\bigskip
	}

}{ 
}

\numberwithin{equation}{section}
\numberwithin{figure}{section}
\newcounter{example}[section]
\numberwithin{example}{section}
\newcounter{remark}[section]
\numberwithin{remark}{section}
\newtheorem{theorem}{Theorem}[section]

\newtheorem{proposition}[theorem]{Proposition}
\newtheorem{lemma}[theorem]{Lemma}
\newtheorem{corollary}[theorem]{Corollary}

\newtheorem{exmp}[example]{Example}
\newtheorem{rmrk}[remark]{Remark}
\newenvironment{example}{\begin{exmp}\rm}{\end{exmp}}
\newenvironment{remark}{\begin{rmrk}\rm}{\end{rmrk}}
\newtheorem{assumption}{Assumption}

\bibliographystyle{apalike}

\def\rr{\tilde{\mathbf{r}}}

\begin{document}
	\thispagestyle{empty}
	\ifims{
		\title{\thetitle}
		\ifau{ 
			\author{
				\authora
				\ifdef{\thanksa}{\thanks{\thanksa}}{}
				\\[5.pt]
				\addressa \\
				\texttt{ \emaila}
			}
		}
		{  
			\author{
				\authora
				\ifdef{\thanksa}{\thanks{\thanksa}}{}
				\\[5.pt]
				\addressa \\
				\texttt{ \emaila}
				\and
				\authorb
				\ifdef{\thanksb}{\thanks{\thanksb}}{}
				\\[5.pt]
				\addressb \\
				\texttt{ \emailb}
			}
		}
		{   
			\author{
				\authora
				\ifdef{\thanksa}{\thanks{\thanksa}}{}
				\\[5.pt]
				\addressa \\
				\texttt{ \emaila}
				\and
				\authorb
				\ifdef{\thanksb}{\thanks{\thanksb}}{}
				\\[5.pt]
				\addressb \\
				\texttt{ \emailb}
				\and
				\authorc
				\ifdef{\thanksc}{\thanks{\thanksc}}{}
				\\[5.pt]
				\addressc \\
				\texttt{ \emailc}
			}
		}
		
		\maketitle
		\pagestyle{myheadings}
		\markboth
		{\hfill \textsc{ \small \theruntitle} \hfill}
		{\hfill
			\textsc{ \small
				\ifau{\runauthora}
				{\runauthora and \runauthorb}
				{\runauthora, \runauthorb, and \runauthorc}
			}
			\hfill}
		\begin{abstract}
			\theabstract
		\end{abstract}
		
		\ifAMS
		{\par\noindent\emph{AMS 2010 Subject Classification:} Primary \kwdp. Secondary \kwds}
		{\par\noindent\emph{JEL codes}: \kwdp}
		{}
		
		\par\noindent\emph{Keywords}: \thekeywords
	} 
	{ 
		\begin{frontmatter}
			\title{\thetitle}

			
			\runtitle{\theruntitle}
			
			\ifau{ 
				\begin{aug}
					\author{\authora\ead[label=e1]{\emaila}}
					\address{\addressa \\
						\printead{e1}}
				\end{aug}
				
				\runauthor{\runauthora}
				\affiliation{\affiliationa} }
			{ 
				\begin{aug}
					\author{\authora\ead[label=e1]{\emaila}\thanksref{t21}}
					\and
					\author{\authorb\ead[label=e2]{\emailb}\thanksref{t22}}
					
					\address{\addressa \\
						\printead{e1}}
					\address{\addressb \\
						\printead{e2}}
					\thankstext{t21}{\thanksa}
					\thankstext{t22}{\thanksb}
					\affiliation{\affiliationa, \affiliationb} 
					\runauthor{\runauthora and \runauthorb}
				\end{aug}
			} 
			{ 
				\begin{aug}
					\author{\authora\ead[label=e1]{\emaila}\thanksref{t21}}
					\and
					\author{\authorb\ead[label=e2]{\emailb}\thanksref{t22}}
					\and
					\author{\authorc\ead[label=e3]{\emailc}\thanksref{t23}}
					
					\address{\addressa \\
						\printead{e1}}
					\address{\addressb \\
						\printead{e2}}
					\address{\addressc \\
						\printead{e3}}
					\thankstext{t21}{\thanksa}
					\thankstext{t22}{\thanksb}
					\thankstext{t23}{\thanksc}
					\affiliation{\affiliationa, \affiliationb, \affiliationc} 
					\runauthor{\runauthora, \runauthorb, and \runauthorc}
			\end{aug}}

			\begin{abstract}
				\theabstract
			\end{abstract}

			
			
		\end{frontmatter}
	} 
	
	\newenvironment{myexample}[2][]{\refstepcounter{example}\par\medskip
		\noindent \textbf{Example~\theexample} (#2)\textbf{.}\rmfamily}{\medskip}
	
	\newcounter{exercise}[section]
	\numberwithin{exercise}{section}
	\newtheorem{exrc}[exercise]{Exercise}
	\newenvironment{exercise}{\begin{exrc}\rm}{\end{exrc}}
	
	
	
	\def\betav{\bb{\beta}}
	\def\gammav{{\boldsymbol{\gamma}}}
	\def\deltav{\boldsymbol{\delta}}
	\def\xv{\mathbf{x}}
	\def\av{\mathbf{a}}
	\def\ev{\mathbf{e}}
	\def\uv{\mathbf{u}}
	\def\gv{G}
	\def\tv{\mathbf{t}}
	\def\yv{\mathbf{y}}
	
	\def\entrlq{\entrl_{1}}
	\def\entrlg{\entrl_{2}}

	\def\R{\mathbb{R}}
	\def\E{\mathbb{E}}
	\def\P{\mathbb{P}}
	\def\C{\mathbb{C}}
	\def\X{\mathbb{X}}
	\def\Rplus{\R_{+}}
	\def\S{\mathbb{S}}
	\def\H{\mathbb{H}}
	\def\Splus{\S_{+}}
	\def\N{\mathbb{N}}
	\def\BigO{\mathcal{O}}
	\def\T{\top}
	
	%
	%
	\def\Normal{\mathcal{N}}
	\def\FF{\mathcal{F}}
	\def\SS{\mathcal{S}}
	\def\WW{\mathcal{W}}
	\def\XX{\mathcal{X}}
	\def\AA{\mathcal{A}}
	\def\BB{\mathcal{B}}
	\def\II{\mathcal{I}}
	\def\HH{\mathcal{H}}
	
	\def\dist{\mathsf{d}}
	\def\Dist{\mathsf{D}}
	\def\sign{\text{sign}}
	\def\Arg{\text{Arg}}
	\def\cone{\text{Cone}}
	\def\Cone{\cone}
	\def\conv{\text{Conv}}
	\def\tr{\mathrm{Tr}}
	\def\Tr{\tr}
	\def\Diag{\mathrm{Diag}}
	\def\diag{\mathrm{diag}}
	\def\Off{\mathrm{Off}}
	\def\floor#1{\lfloor #1 \rfloor}
	\def\ceil#1{\lceil #1 \rceil}
	\def\Ind{\boldsymbol{1}}
	\def\Ent{\mathrm{Ent}}
	
	\def\dmin{d_{\min}}
	\def\dmax{d_{\max}}
	\def\do{d^{\circ}}
	
	\def\eps{\varepsilon}
	\def\epsv{{\boldsymbol \eps}}

	\def\gh{{\widehat{g}}}

	%
	%
	\definecolor{light-green}{rgb}{0.9, 1, 0.8}
	\definecolor{dark-green}{rgb}{0.1, 0.5, 0.1}
	\definecolor{light-red}{rgb}{1, 0.9, 0.8}
	%
	%
	\def\ornstart{|\mspace{-8mu}-\mspace{-8mu}}
	\def\orncontinue{\langle\mspace{-3mu}\rangle\mspace{-8mu}-\mspace{-8mu}}
	\def\ornend{|}
	\def\ornlineX{\ornstart\orncontinue\orncontinue\orncontinue\orncontinue\orncontinue\orncontinue\orncontinue\orncontinue\orncontinue\orncontinue\orncontinue\orncontinue\orncontinue\orncontinue\orncontinue\orncontinue\orncontinue\orncontinue\orncontinue\orncontinue\orncontinue\orncontinue\orncontinue\orncontinue\orncontinue\orncontinue\orncontinue\orncontinue\orncontinue\orncontinue\orncontinue\orncontinue\orncontinue\orncontinue\orncontinue\orncontinue\orncontinue\orncontinue\orncontinue\orncontinue\ornend}
	\def\ornline{\[{\ornlineX}\]}
	
	\def\lambdao{\overline{\lambda}}
	
	\newcommand{\vertiii}[1]{{\left\vert\kern-0.25ex\left\vert\kern-0.25ex\left\vert #1 
			\right\vert\kern-0.25ex\right\vert\kern-0.25ex\right\vert}}
	
	\section{Introduction}
	The concentration properties of quadratic forms of random variables is a classic topic in probability. A well-known result is due to Hanson and Wright (we refer to the form of this inequality presented in \cite{Rudelson13}), which claims that if $A$ is an $n \times n$ real matrix and $X = (X_1, \ldots, X_n)$ is a random vector in $\mathbb{R}^n$ with independent centered components satisfying $\max_{i}\|X_i\|_{\psi_2} \le K$ (we will recall the definition of $\| \cdot \|_{\psi_2}$ below), then for all $t \ge 0$
	\begin{equation}
	\label{HansonWright}
	\P(|X^\T AX - \E X^\T AX| \ge t) \le 2\exp\left(-c \min\left\{\frac{t^2}{K^4\|A\|^2_{\text{HS}}}, \frac{t}{K^2\|A\|}\right\}\right),
	\end{equation}
	for some absolute $c > 0$, where $\|A\|_{\text{HS}} = \sqrt{\sum_{i, j} A_{i, j}^2}$ defines the Hilbert-Schmidt norm and $\|A\|$ is {the} operator norm of $A$. An important extension of these results is when instead of just one matrix $A$ we have a family of matrices $\mathcal{A}$ and want to understand the behaviour of random quadratic forms simultaneously for all matrices in the family. As a concrete example we consider an order-2 Rademacher chaos: given a family $\mathcal{A} \subset \R^{n \times n} $ of $n \times n$ real symmetric matrices with zero diagonal, that is for all $A \in \mathcal{A}$ we have $A_{ii} = 0$ for all $i = 1, \ldots, n$, one wants to study the following random variable
	\begin{equation*}
	{Z_{\AA}(\eps)} = \sup\limits_{A \in \mathcal{A}}\sum\limits_{i, j = 1}^n A_{ij}\varepsilon_i\varepsilon_j = \sup\limits_{A \in \mathcal{A}}\varepsilon^{\T}A\varepsilon,
	\end{equation*}
	where $\varepsilon = (\varepsilon_{1}, \ldots, \varepsilon_n)^{\T}$ is a sequence of independent Rademacher signs taking values $\pm 1$ with equal probabilities. In the celebrated paper \cite{Talagrand96} it was shown, in particular, that there is an absolute constant $c > 0$, such that for any $t \ge 0$
	\begin{equation}
	\label{Talagrand}
	\P(|Z_{\AA}(\eps) - \E Z_{\AA}(\eps)| \ge t) \le 2 \exp\left(-c\min\left(\frac{t^2}{(\E\sup\limits_{A \in \mathcal{A}}\|AX\|)^2}, \frac{t}{\sup\limits_{A \in \mathcal{A}}\|A\|}\right)\right).
	\end{equation}
	{Similar inequalities in the Gaussian case follow from the results in \cite{Borell84} and \cite{Arcones93}. Apart from the new techniques that were used to prove \eqref{Talagrand}, the significance of this result is that previously (see, for example, \cite{ledoux2013probability}) similar bounds were one-sided and had a multiplicative constant greater than $1$ before $\E Z_{\AA}(\eps)$. Results with a multiplicative factor not equal to $1$ are usually called \emph{deviation inequalities} in contrast to \emph{concentration bounds} of the form \eqref{Talagrand} that are studied below. A simplified proof of the {upper tail} of \eqref{Talagrand}, that is the upper bound on $\P(Z_{\AA}(\eps) - \E Z_{\AA}(\eps) \ge t)$, appeared later in \cite{Boucheron03}. We will refer to inequalities of this form as ({one-sided}) \emph{concentration inequalities}.
	
	It is worth mentioning in advance that our main results are one-sided concentration inequalities. This is because the entropy method, used extensively in our proofs, is known to have some limitations when applied to prove lower tail inequalities (see the discussions in \cite{Ledoux2001, boucheron2013concentration}). It would be interesting for future work to consider similar bounds for the lower tails. 
	}
	
	Observe that when for every $A \in \mathcal{A}$ the diagonal elements are zero, the corresponding quadratic forms are centered, that is $\E \varepsilon^TA\varepsilon = 0$. In the general situation we will be interested in the analysis of 
	\begin{equation}
	\label{zdef}
	{Z_{\AA}(X)} = \sup\limits_{A \in \mathcal{A}}(X^{\T}AX - \E X^{\T}AX),
	\end{equation}
	for a random vector $X$ taking its values in $\mathbb{R}^n$. The analysis of both the expectation and the concentration/deviation properties of this random variable has appeared recently in many papers. To name several deviation inequalities: \cite{Mendelson14} study $\E Z_{\AA}(X)$ and deviations of $Z_{\AA}(X)$ for classes of positive semidefinite matrices with applications to compressive {sensing}, \cite{Dicker17} prove deviation inequalities for $\sup_{A \in \mathcal{A}}(X^{\T}AX - \E X^{\T}AX)$ and subgaussian vectors $X$ under some extra assumptions.
	Additionally, a recent paper \cite{adamczak2018hanson} studies deviation bounds for \( Z = \| X^{\T} A X - \E X^{\T} A X \| \) with Banach space-valued matrices \( A \) and Gaussian variables, providing upper and lower bounds for the moments. {The deviation inequality for general subgaussian vectors and a single positive semi-definite matrix was obtained in \cite{Hsu12}. Returning to concentration inequalities}, it was shown in \cite{Adam15} that if $X$ satisfies the so-called \emph{concentration property} with constant $K$, that is for every $1$-Lipschitz function $\varphi:\mathbb{R}^n \to \mathbb{R}$  and any $t \ge 0$ we have $\E|\varphi(X)| < \infty$ and
	\begin{equation}
	\label{concproperty}
	\P\left(|\varphi(X) - \E\varphi(X)| \ge t \right) \le 2\exp\left(-t^2/2K^2 \right),
	\end{equation}
	then the following bound, similar to \eqref{Talagrand}, holds for every $t \ge 0$,
	\begin{equation}
	\label{adam}
	\P(|Z_{\AA}(X) - \E Z_{\AA}(X)| \ge t) \le 2 \exp\left(-c\min\left(\frac{t^2}{K^2(\E\sup\limits_{A \in \mathcal{A}}\|AX\|)^2}, \frac{t}{K^2\sup\limits_{A \in \mathcal{A}}\|A\|}\right)\right).
	\end{equation}
	This result has application to covariance estimation and recovers another recent concentration result of \cite{Koltch17}; this is discussed further in Section \ref{application}. The drawback of \eqref{adam} is that the required concentration property places strong restrictions on the distribution of $X$: while it is satisfied by the standard Gaussian distribution as well as by some log-concave distributions (see \cite{Ledoux2001}), it is not known whether the concentration property holds for general subgaussian entries and even in the simplest case of Rademacher random vectors.
	
	In this paper we extend the aforementioned results in two directions. We extend the result of \cite{Boucheron03} for bounded variables by allowing non-zero diagonal values of the matrices and unbounded subgaussian variables \( X_i \). 
	First, let us recall the following definition. For \( \alpha > 0 \) denote the \( \psi_{\alpha} \)-norm of a random variable $Y$ by
	\[
	\|Y\|_{\psi_\alpha} = \inf\left\{t \ge 0: \E \exp\left(\frac{|Y|^\alpha}{t^\alpha}\right) \le 2\right\},
	\]
	which is a proper norm whenever \( \alpha \geq 1 \).
	A random variable $Y$ with $\|Y\|_{\psi_1} < \infty$ is referred to as subexponential and $\|Y\|_{\psi_2} < \infty$ is referred to as subgaussian and the corresponding norm is usually named a subgaussian norm. We also use the $L_p(P)$ norm. For $p \ge 1$ we set $\|Y\|_{L_p} = (\E|Y|^p)^{\frac{1}{p}}$. One of our main contributions is the following upper-tail bound. 
	\begin{theorem}
		\label{mainthm}
		Suppose that the components of \(X =  (X_1, \dots, X_n) \) are independent centered random variables and $\mathcal{A}$ is a finite family of $n \times n$ real symmetric matrices. Denote $M = \bigl\| \max_{i} |X_i| \bigr\|_{\psi_2}$. Then, for any \( t \geq \max\{M \E \sup_{A \in \AA} \|A X \|, M^{2} \sup_{A \in \AA} \| A \|\} \) {we have}
		\begin{equation}\label{hason_wright_unbounded}
		    \P({Z_{\AA}(X) - \E Z_{\AA}(X)} \ge t) \le \exp\left(-c\min\left(\frac{t^2}{M^2(\E\sup\limits_{A \in \mathcal{A}}\|AX\|)^2}, \frac{t}{M^2\sup\limits_{A \in \mathcal{A}}\|A\|}\right)\right),
		\end{equation}
		where \( c > 0 \) is an absolute constant and $Z_{\AA}(X)$ is defined by \eqref{zdef}.
	\end{theorem}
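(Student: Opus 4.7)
The plan is to combine a truncation of the subgaussian entries with a one-sided entropy argument in the spirit of \cite{Boucheron03}, handling both the diagonal contributions and the unbounded increments. I would fix a level $T \asymp M\sqrt{\log(n|\mathcal{A}|)}$ and set $\widetilde X_i = X_i\Ind\{|X_i|\le T\} - \E X_i\Ind\{|X_i|\le T\}$, so that $\widetilde X \in \R^n$ has independent, centered, bounded coordinates. Since $\|X_i\|_{\psi_2}\le M$, a union bound gives $\widetilde X = X - c_T$ on an event of probability $\ge 1 - (n|\mathcal{A}|)^{-c'}$ with a small deterministic $c_T$, and both $|\E Z_{\AA}(X) - \E Z_{\AA}(\widetilde X)|$ and the contribution of the bad event to the tail can be absorbed into $t$ using the hypothesis $t \ge M\,\E\sup_{A}\|AX\|$. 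It therefore suffices to prove the tail bound for $\widetilde Z := Z_{\AA}(\widetilde X)$.

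For $\widetilde Z$, sub-additivity of entropy combined with the one-sided modified log-Sobolev inequality yields $\Ent(e^{\lambda \widetilde Z}) \le \lambda^{2}\sum_i \E[(\widetilde Z - \widetilde Z'_i)_+^{2} e^{\lambda \widetilde Z}]$, where $\widetilde Z'_i$ replaces $\widetilde X_i$ by an independent copy. Letting $A^{\ast} = A^{\ast}(\widetilde X)$ be the maximizer in $\widetilde Z$, the positive increment $(\widetilde Z - \widetilde Z'_i)_+$ is bounded by the corresponding change in $\widetilde X^{\T}A^{\ast}\widetilde X$, which splits into a linear part $2(\widetilde X_i - \widetilde X'_i)\sum_{j\ne i}A^{\ast}_{ij}\widetilde X_j$ and a diagonal part $A^{\ast}_{ii}(\widetilde X_i^{2} - (\widetilde X'_i)^{2})$. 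Squaring, summing over $i$, and using $\|\widetilde X\|_\infty \le T$ (the ``truncation of gradients''), one arrives at an estimate of the form
\[
\Ent(e^{\lambda \widetilde Z}) \le C\lambda^{2} T^{2}\, \E\!\left[\sup_{A\in\mathcal{A}}\|A\widetilde X\|^{2} e^{\lambda \widetilde Z}\right] + C\lambda^{2} T^{2} \sup_{A}\|A\|\,\E[\widetilde Z_+ e^{\lambda \widetilde Z}],
\]
the second summand coming from the diagonal via a self-bounding estimate.

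To close the loop I would decouple the cross term via $\sup_A\|A\widetilde X\|^{2} \le 2(\E\sup_A\|AX\|)^{2} + 2(\sup_A\|A\widetilde X\| - \E\sup_A\|A\widetilde X\|)_+^{2}$ and control the deviation by a second pass of the entropy method applied to the $\sup_{A}\|A\|$-Lipschitz functional $\widetilde X \mapsto \sup_A\|A\widetilde X\|$. This produces $\E[\sup_A\|A\widetilde X\|^{2}\, e^{\lambda \widetilde Z}] \le 2(\E\sup_A\|AX\|)^{2}\,\E e^{\lambda \widetilde Z} + (\textrm{Bernstein remainder})$ for $|\lambda|\sup_A\|A\|$ small. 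Substituting back yields a Herbst-type differential inequality for $F(\lambda) = \log \E e^{\lambda(\widetilde Z - \E \widetilde Z)}$ of mixed sub-Gaussian/sub-exponential form, and Markov's inequality converts it into the stated tail. The factor $M = \|\max_i |X_i|\|_{\psi_2}$, rather than the per-coordinate subgaussian norm, is exactly what absorbs the $\sqrt{\log(n|\mathcal{A}|)}$ coming from $T$.

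The main obstacle, in my view, is the mixed expectation $\E[\sup_{A}\|A\widetilde X\|^{2}\, e^{\lambda \widetilde Z}]$: a naive Cauchy--Schwarz costs a factor $\E e^{2\lambda \widetilde Z}$ and destroys the iteration, so one must genuinely exploit that $\|A\widetilde X\|^{2} = \widetilde X^{\T}(A^{\T}A)\widetilde X$ is itself a supremum of quadratic forms and run the same machinery one level deeper. The assumption $t \ge M^{2} \sup_A\|A\|$ is precisely what is needed to swallow the Bernstein-type remainder generated by this inner iteration, while $t \ge M\,\E\sup_A\|AX\|$ absorbs the truncation bias in the sub-Gaussian regime.
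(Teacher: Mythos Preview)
Your overall architecture---truncate the entries, run the modified log--Sobolev inequality on the bounded vector, then decouple the mixed expectation $\E[\sup_A\|A\widetilde X\|^2 e^{\lambda \widetilde Z}]$---is the same as the paper's, but two of your steps will not close as written.

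The truncation level $T \asymp M\sqrt{\log(n|\mathcal{A}|)}$ is the main problem. With this choice the bounded-case entropy argument produces $V_+ \lesssim T^2\sup_A\|A\widetilde X\|^2$, so Herbst delivers a tail with $T$ in place of $M$, i.e.\ an extra factor $\log(n|\mathcal{A}|)$ that the theorem does not have (and a dependence on $|\mathcal{A}|$ that would prevent passing to infinite families). Your sentence ``$M$ absorbs the $\sqrt{\log(n|\mathcal{A}|)}$ coming from $T$'' is backwards: $T$ is strictly larger than $M$. The paper instead truncates at the \emph{fixed} level $8\,\E\max_i|X_i|\lesssim M$ and does \emph{not} try to make $\widetilde X=X$ with high probability. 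The remainder $W_i=X_i\Ind\{|X_i|>8\,\E\max_i|X_i|\}$ is controlled by the Hoffman--J\o{}rgensen inequality, which yields $\bigl\|\,\|W\|\,\bigr\|_{\psi_2}\lesssim M$ directly; this is precisely what keeps the bound at $M$ with no logarithmic loss and no reference to $|\mathcal{A}|$.

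For the mixed term $\E[\sup_A\|A\widetilde X\|^2 e^{\lambda \widetilde Z}]$ the paper has a clean device you do not invoke: the entropy variational formula $\E W e^{\lambda Z}\le \E e^{\lambda Z}\log\E e^{W}+\Ent(e^{\lambda Z})$. Since convex concentration for bounded variables gives $\sup_A\|A\widetilde X\|^2$ a subexponential tail with scale $L\sim(\E\sup_A\|AX\|)^2$ and slope $\theta\sim M^4\sup_A\|A\|^2$, the variational formula feeds $\Ent(e^{\lambda Z})$ back into itself and yields $(1-C\theta\lambda^2)\Ent(e^{\lambda Z})\le \lambda^2(L+C\theta)\E e^{\lambda Z}$, after which Herbst closes immediately (this is the content of Lemma~\ref{grad_trunc_lem}). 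Your ``second pass of the entropy method'' would have to reproduce exactly this recursion, but as described it is not clear how you avoid the Cauchy--Schwarz loss you yourself flag. Finally, the diagonal ``self-bounding estimate'' producing $\sup_A\|A\|\,\E[\widetilde Z_+ e^{\lambda\widetilde Z}]$ is not a recognizable inequality: the diagonal increments contribute a term of size $\sup_A\|\Diag(A)\widetilde X\|^2$, not $\widetilde Z_+$. The paper handles this by a separate symmetrization argument (Lemma~\ref{diag_compare:lem}) showing $\E\sup_A\|\Diag(A)X\|\lesssim\E\sup_A\|AX\|$ for centered $X$, which then lets the diagonal be absorbed into the off-diagonal estimate.
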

	
	\begin{remark}
		In Theorem \ref{mainthm} and below we assume that all $A \in \mathcal{A}$ {are} symmetric. This was done only for convenience of presentation and in fact, the analysis may be performed for general square matrices. The only difference will be that in many places $A$ should be replaced by $\frac{1}{2}(A + A^T)$.
	\end{remark}
	{\begin{remark}
	    Notice that even though the above result is stated for finite sets \( \AA \), it also holds for arbitrary bounded sets. Indeed, for a bounded set of matrices \( \AA \), since these matrices are finite dimensional we can consider an increasing sequence \( \AA_{1} \subset \AA_{2} \subset \dots ... \subset \AA \) of finite epsilon-nets of \( \AA \) such that the pointwise convergence \( Z_{\AA_k}(X) \rightarrow Z_{\AA}(X) \) holds. This pointwise convergence implies convergence in probability, in particular,
	    \[
	        \lim_{k \rightarrow \infty} \P( Z_{\AA_k}(X) - \E Z_{\AA_k}(X) \geq t ) =  \P( Z_{\AA}(X) - \E Z_{\AA}(X) \geq t ).
	    \]
	    Since for a subset \( \mathcal{A}_k \subset \AA \) the values \( \E \sup_{A \in \AA_k} \| A X \|^{2} \) and \( \sup_{A \in \AA_k} \| A \| \) are not greater than those for the original set \( \AA \), we obtain the bound \eqref{hason_wright_unbounded} for arbitrary bounded sets. 
	    For the sake of simplicity, we only consider finite sets below.
	\end{remark}}
	
	In particular, Theorem \ref{mainthm} recovers the right-tail of the result of Talagrand \eqref{Talagrand} up to absolute constants, since in  this case we obviously have $\bigl\| \max_{i} |\varepsilon_i| \bigr\|_{\psi_2} \lesssim 1$. Furthermore, the result of Theorem \ref{mainthm} works without the assumption used in \cite{Talagrand96} and \cite{Boucheron03} that diagonals of all matrices in $\mathcal{A}$ are zero. Moreover, it is also applicable in some situations when the concentration property \eqref{concproperty} holds: indeed, if $X$ is a standard normal vector in $\mathbb{R}^n$ then it is well known (see \cite{ledoux2013probability}) that $M = \bigl\| \max_{i} |X_i| \bigr\|_{\psi_2} \sim \sqrt{\log n}$. If moreover the identity matrix $I_n \in \mathcal{A}$ then $\E\sup_{A \in \mathcal{A}} \| A X \| \ge \E\|X\| \gtrsim \sqrt{n}$. Therefore, in this case the factor $M$ is only of at most logarithmic order when compared to $\E\sup_{A \in \mathcal{A}} \| A X \|$. 
	
	In the special case that $\mathcal{A}$ consists of just one matrix, our bound recovers the bound that is similar to the original Hanson-Wright inequality. On the one hand, our bound may have an extra logarithmic factor that depends on the dimension $n$. On the other hand, the original term $\max_{i}\|X_i\|_{\psi_2}\|A\|_{\text{HS}}$ is replaced by the better term $\E\|AX\|$. We discuss this phenomenon below. The core of the proof of the Hanson-Wright inequality in \cite{Rudelson13} is based on the decoupling technique which may be used (at least in a straightforward way) to prove the deviation inequality---but not the concentration inequality---for $\sup_{A \in \mathcal{A}}(X^{\T}AX - \E X^{\T}AX)$ in the case that $\mathcal{A}$ consists of more than one matrix. 
	
	A natural question to ask is whether one may improve Theorem \ref{mainthm} and replace $M = \bigl\| \max_{i} |X_i| \bigr\|_{\psi_2}$ by $K = \max_{i}\bigl\| X_i \bigr\|_{\psi_2}$. In Section \ref{application} we discuss that in the deviation version of Theorem \ref{mainthm} this replacement is not possible in some cases. This is quite unexpected in light of the fact that $\bigl\| \max_{i} |X_i| \bigr\|_{\psi_2}$ does not appear in the original Hanson-Wright inequality. Therefore, we believe that the form of our result is close to optimal. We also provide the following extension of Theorem \ref{mainthm} which may be better in some cases. 
	
	\begin{proposition}
		\label{mainthm2}
		Suppose that the components of \(X =  (X_1, \dots, X_n) \) are independent centered random variables. Suppose also that the variables \( X_i \) are distributed symmetrically ($X_i$ has the same distribution as $-X_i$). Let $\mathcal{A}$ be a finite family of $n \times n$ real symmetric matrices. Denote $M = \bigl\| \max_{i} |X_i| \bigr\|_{\psi_2}$ and $K = \max_{i}\bigl\| X_i \bigr\|_{\psi_2}$ and let $\gv$ be a standard Gaussian vector in $\mathbb{R}^n$. Then, for any $ t \ge \max\{MK \E \sup_{A \in \AA} \|A G \|, MK \sup_{A \in \AA} \| A \|\} $ we have
		\[
		\P({Z_{\AA}(X) - \E Z_{\AA}(X)} \ge t) \le \exp\left(-c\min\left(\frac{t^2}{M^2K^2(\E\sup\limits_{A \in \mathcal{A}}\|AG\|)^2}, \frac{t}{MK\sup\limits_{A \in \mathcal{A}}\|A\|}\right)\right),
		\]
		where \( c > 0 \) is an absolute constant and ${Z_{\AA}(X)}$ is defined by \eqref{zdef}.
	\end{proposition}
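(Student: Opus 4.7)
The plan is to reduce Proposition~\ref{mainthm2} to Theorem~\ref{mainthm} via a symmetrization that turns $X^T A X$ into a Rademacher chaos with random weights. By the assumed symmetry of each coordinate, $X \stackrel{d}{=} \varepsilon \odot Y$, where $Y = (|X_1|, \ldots, |X_n|)$ and $\varepsilon = (\varepsilon_1, \ldots, \varepsilon_n)$ is an independent vector of Rademacher signs. Writing $D_Y = \diag(Y_1, \ldots, Y_n)$, the quadratic form becomes $X^T A X \stackrel{d}{=} \varepsilon^T (D_Y A D_Y) \varepsilon$, so that conditionally on $Y$, $Z_\AA(X)$ is the supremum of a Rademacher chaos over the weighted family $\{D_Y A D_Y : A \in \AA\}$. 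Since $\|\max_i |\varepsilon_i|\|_{\psi_2}$ is an absolute constant, Theorem~\ref{mainthm} applied conditionally on $Y$ yields
\[
\P\bigl( Z_\AA(X) - \E_\varepsilon Z_\AA(X) \ge t \mid Y \bigr) \le \exp\!\left( -c \min\!\left( \frac{t^2}{V(Y)^2}, \frac{t}{U(Y)} \right) \right),
\]
where $V(Y) := \E_\varepsilon \sup_A \|D_Y A D_Y \varepsilon\|$ and $U(Y) := \sup_A \|D_Y A D_Y\|$.

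The next step is to translate these random parameters into the deterministic quantities appearing in the statement. Two ingredients enter: (i) $\|D_Y\|_{\mathrm{op}} = \max_i |X_i|$ has $\psi_2$-norm exactly $M$; and (ii) since $X$ is symmetric subgaussian with $\max_i \|X_i\|_{\psi_2} \le K$, a standard contraction/Gaussian comparison gives $\E \sup_A \|A X\| \lesssim K\, \E \sup_A \|A G\|$. Using the pointwise bound $\|D_Y A D_Y \varepsilon\| \le \max_i Y_i \cdot \|A(D_Y \varepsilon)\|$, the distributional identity $D_Y \varepsilon \stackrel{d}{=} X$, and a Cauchy--Schwarz split over $Y$ and $\varepsilon$, one obtains $\E V(Y) \lesssim M K\, \E \sup_A \|A G\|$, which is exactly the variance factor announced in the statement.

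The principal obstacle is the operator-norm term: the crude estimate $U(Y) \le (\max_i Y_i)^2 \sup_A \|A\|$ produces an $M^2 \sup_A \|A\|$ factor, which only matches Theorem~\ref{mainthm} rather than improving it. To sharpen $M^2$ to $M K$, I would separate the extremal coordinate of $Y$ from the rest: on the high-probability event $\{\max_i Y_i \le c M\}$ one factor of $\|D_Y\|_{\mathrm{op}}$ contributes the announced $M$, while the second factor must be absorbed into $K$ through a more delicate argument that reuses the symmetric-subgaussian/Gaussian comparison coordinatewise (outside the maximum, the typical scale of $Y_i$ is $K$, not $M$). Once $V(Y)$ and $U(Y)$ are controlled, the fluctuation of $\E_\varepsilon Z_\AA(X)$ as a function of $Y$ is a sub-exponential-type functional that can be handled by a Bernstein-type bound (or a second application of Theorem~\ref{mainthm} on the magnitudes $Y$). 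Integrating out the conditioning then yields the desired unconditional inequality; the sharp $M K$ control of $U(Y)$ is what I expect to be the main obstacle in making this plan rigorous.
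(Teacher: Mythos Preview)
Your approach is genuinely different from the paper's, and the obstacle you identify at the end is real and, as far as I can see, fatal for the conditional-on-$Y$ strategy. Once you freeze $Y = (|X_1|,\dots,|X_n|)$ and apply Theorem~\ref{mainthm} as a black box to the Rademacher chaos with matrices $D_Y A D_Y$, the subexponential term is governed by $U(Y) = \sup_{A}\|D_Y A D_Y\|$, and this quantity genuinely can be of order $(\max_i Y_i)^2 \sup_A \|A\|$: take $A$ supported on the single coordinate where the maximum is attained. Your ``separate the extremal coordinate'' idea does not help here, because the offending matrix can live entirely on that coordinate. So the conditional route appears to be stuck at $M^2$ rather than $MK$ in the subexponential term.

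The paper avoids this by not reducing to Theorem~\ref{mainthm} at all. It reruns the entropy-method proof of Theorem~\ref{mainthm} and changes one ingredient. Recall that in the bounded case the modified log-Sobolev inequality gives
\[
V_+ \lesssim M^2 \sup_{A\in\AA}\|AY\|^2,
\]
where the factor $M^2$ comes from $|Y_i - Y_i'|^2 \le (2M)^2$ after the usual truncation $Y_i = X_i \Ind(|X_i|\le M)$. The paper then replaces the convex-concentration bound for $\sup_A\|AY\|$ by a \emph{Gaussian comparison} (their Lemma~\ref{norm_AX_by_Gauss}, proved via generic chaining and the majorizing measures theorem): for centered subgaussian $Y$,
\[
\sup_{A\in\AA}\|AY\| \lesssim K\bigl(\E\sup_{A\in\AA}\|AG\| + \sup_{A\in\AA}\|A\|\sqrt{t}\bigr)
\]
with probability $\ge 1 - Ce^{-t}$. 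Plugging this into $V_+$ yields $L \sim M^2K^2(\E\sup_A\|AG\|)^2$ and $\theta \sim M^2K^2\sup_A\|A\|^2$, so Lemma~\ref{grad_trunc_lem} produces the announced $MK\sup_A\|A\|$ in the subexponential term. The symmetry of the $X_i$ is used only to guarantee that the truncated variables $Y_i$ remain centered, which is needed to invoke the Gaussian comparison lemma. In short: the paper splits the two factors of $D_Y$ \emph{before} applying any tail bound---one factor becomes the $M$ from the increment bound in the log-Sobolev step, the other stays inside $\|AY\|$ and is controlled by $K$ via chaining---whereas your conditioning lumps both factors into $\|D_Y A D_Y\|$ and then it is too late to decouple them.
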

	\begin{remark}
		Proposition \ref{mainthm2} is closer to the standard Hanson-Wright inequality~\eqref{HansonWright}. Indeed, in the case that $\mathcal{A} = \{A\}$ we have $\E\|AG\| \sim \|A\|_{\text{HS}}$. The difference is that $K^4$ and $K^2$ are replaced by $M^2K^2$ and $MK$ respectively.
	\end{remark}
	We proceed with some {notation} that will be used below. For a non-negative random variable~$Y$, define its \emph{entropy} as
	\[
	\Ent(Y) = \E Y\log Y - \E Y\log \E Y.
	\]
	Instead of the concentration property \eqref{concproperty}, we also discuss the following closely related property:
	\begin{assumption}
		\label{logSobolev}
		We say that a random vector $X$ taking value in $\mathbb{R}^n$ satisfies the logarithmic Sobolev inequality with constant $K > 0$ if for any continuously differentiable function $f: \mathbb{R}^n \to \mathbb{R}$ we have
		\begin{equation}\label{log-sobol_assume}
		\Ent(f^2) \le 2K^2\E\|\nabla f(X)\|^2,
		\end{equation}
		whenever both sides of the inequality are not infinite.
	\end{assumption}
	
	One of the technical contributions of this paper is that we use a similar scheme to prove Theorem \ref{mainthm} and to recover \eqref{adam} under the logarithmic Sobolev Assumption \ref{logSobolev}. The application of logarithmic Sobolev inequalities requires computation of the gradient of the function of interest, that is, in our case, the gradient of $Z_{\AA}(X) = \sup\limits_{A \in \mathcal{A}}(X^TAX - \E X^T A X)$. In the analysis that we present, there is a need to control the behaviour of $\nabla Z_{\AA}(X)$ (or its analogs) and, as in \cite{Boucheron03} and \cite{Adam15}, we use a truncation argument to do so. However, in both cases our proofs make use of the \emph{entropy variational formula} of \cite{boucheron2013concentration}, that states that for random variables $Y, W$ with $\E\exp(W) < \infty$ we have
	\begin{equation}
	\label{varformula}
	\E(W\exp(\lambda Y)) \le \E\exp(\lambda Y)\log(\E\exp(W)) + \Ent(\exp(\lambda Y)).
	\end{equation}
	Doing so allows us to shorten the proofs and avoid some technicalities appearing in previous papers. Finally, to prove Theorem \ref{mainthm} we use a second truncation argument: this argument is based on {the} Hoffman-J\o{}rgensen inequality (see \cite{ledoux2013probability}). We also present two lemmas which are used several times in the text. Both results have short proofs and may be of independent interest.
	\begin{lemma}
		\label{grad_trunc_lem}
		Suppose that for random variables $Z, W$ and any \( \lambda > 0 \) we have
		\begin{equation}
		\label{mod_log-sob_1}
		\Ent(e^{\lambda Z}) \leq \lambda^{2} \E W e^{\lambda Z} \quad\quad \text{and} \quad\quad \P( W > L + \theta t) \leq e^{-t},
		\end{equation}
		where $\theta, L$ are positive constants. Then, the following concentration result holds
		\begin{equation}
		\label{decoupled_bound}
		\P( Z - \E Z > t ) \leq \exp\left( -c \min\left\{\frac{t^2}{L + \theta}, \frac{t}{\sqrt{\theta}}\right\}  \right),
		\end{equation}
		where \( c > 0 \) is an absolute constant. If, moreover, \eqref{mod_log-sob_1} holds for any \( \lambda \le 0\), we have 
		\[
		\P( |Z - \E Z| > t ) \leq {2}\exp\left( -c \min\left\{\frac{t^2}{L + \theta}, \frac{t}{\sqrt{\theta}}\right\} \right).
		\]
	\end{lemma}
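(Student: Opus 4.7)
The plan is to run the classical Herbst scheme, after first decoupling $W$ from $Z$ by means of the entropy variational formula~\eqref{varformula}. The hypothesis $\Ent(e^{\lambda Z}) \le \lambda^2 \E W e^{\lambda Z}$ is a ``mixed'' modified log-Sobolev inequality; once $\E W e^{\lambda Z}$ is replaced by a bound depending only on $\lambda, L, \theta$, a standard Herbst argument followed by Chernoff optimization yields the sub-gamma tail in~\eqref{decoupled_bound}.

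To decouple, I would apply~\eqref{varformula} with $Y = Z$ and $\mu W$ in place of $W$ to obtain, for every $\mu > 0$,
\[
\mu\,\E W e^{\lambda Z} \le \E e^{\lambda Z}\,\log \E e^{\mu W} + \Ent(e^{\lambda Z}),
\]
and then multiply by $\lambda^2/\mu$ and combine with the hypothesis to get, for $\mu > \lambda^2$,
\[
\Ent(e^{\lambda Z}) \le \frac{\lambda^2}{\mu - \lambda^2}\,\E e^{\lambda Z}\,\log \E e^{\mu W}.
\]
The moment generating function of $W$ is controlled by the tail hypothesis via a short layer-cake split between $\{W \le L\}$ and $\{W > L\}$: for $\mu \in (0, 1/\theta)$ this produces $\E e^{\mu W} \le e^{\mu L}/(1-\mu\theta)$, hence $\log \E e^{\mu W} \le \mu L - \log(1-\mu\theta) \le \mu L + 2\mu\theta$ provided $\mu\theta \le 1/2$.

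Next, I would choose $\mu = 2\lambda^2$, which makes the prefactor $\lambda^2/(\mu - \lambda^2)$ equal to $1$, and restrict to $\lambda \in [0, 1/(2\sqrt{\theta})]$ so that $\mu\theta \le 1/2$. Substituting yields the clean modified log-Sobolev inequality
\[
\Ent(e^{\lambda Z}) \le C_1 \lambda^2\, \E e^{\lambda Z}, \qquad C_1 := 2(L + 2\theta),
\]
on that range. Writing $\psi(\lambda) = \log \E e^{\lambda(Z - \E Z)}$, this is equivalent to $\lambda \psi'(\lambda) - \psi(\lambda) \le C_1 \lambda^2$, i.e.\ $\frac{d}{d\lambda}\bigl(\psi(\lambda)/\lambda\bigr) \le C_1$. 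Since $\psi(0) = \psi'(0) = 0$, integrating from $0$ gives $\psi(\lambda) \le C_1 \lambda^2$ on $[0, 1/(2\sqrt{\theta})]$, and Chernoff then produces $\P(Z - \E Z > t) \le \exp(-\lambda t + C_1 \lambda^2)$. The unconstrained optimum $\lambda = t/(2C_1)$ lies in the admissible range exactly when $t \le C_1/\sqrt{\theta}$, giving the subgaussian tail $\exp(-t^2/(4C_1))$; for larger $t$ I would set $\lambda = 1/(2\sqrt{\theta})$ to get the subexponential tail $\exp(-t/(4\sqrt{\theta}))$. The two regimes combine into~\eqref{decoupled_bound}. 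The two-sided claim follows by repeating the argument for $\lambda \le 0$ to control $-(Z - \E Z)$ and then taking a union bound, which accounts for the factor $2$.

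The main obstacle, and really the only delicate point, is the simultaneous choice of $\mu$ and the admissible range of $\lambda$ in the decoupling step: $\mu$ must be large enough to absorb the prefactor $\lambda^2/(\mu-\lambda^2)$ into an absolute constant, yet small enough to keep $\mu\theta$ bounded away from $1$ so that the MGF of $W$ remains finite. The coupling $\mu = 2\lambda^2$ together with the restriction $\lambda \lesssim 1/\sqrt{\theta}$ is the minimal pair that accomplishes both, and this restriction is what produces the transition between the subgaussian and subexponential regimes.
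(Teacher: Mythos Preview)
Your proof is correct and follows essentially the same route as the paper: decouple $W$ from $Z$ via the entropy variational formula~\eqref{varformula}, obtain a modified log-Sobolev inequality $\Ent(e^{\lambda Z}) \lesssim (L+\theta)\lambda^{2}\,\E e^{\lambda Z}$ valid for $\lambda \lesssim 1/\sqrt{\theta}$, and finish with the Herbst argument. The only cosmetic difference is that the paper first splits off $L$ and applies~\eqref{varformula} to $(W-L)_{+}/(C\theta)$ at a fixed scale, whereas you apply it to $\mu W$ with the $\lambda$-dependent choice $\mu = 2\lambda^{2}$; both choices produce the same differential inequality up to absolute constants.
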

	
	The second technical result is a version of the convex concentration inequality of \cite{Talagrand96} which does not require the boundedness of the components of $X$. 
	\begin{lemma}
		\label{convexunbound}
		\label{convex_subgauss:lemmamain}
		Let \( f: \mathbb{R}^n \to \mathbb{R} \) be a convex, \( L \)-Lipschitz function with respect to the Euclidean norm on \( \R^{n} \) and $X = (X_1, \ldots, X_n)$ be a random vector with independent components. Then, for any \( t \ge  CL\left\|\max_{i} |X_i| \right\|_{\psi_2}\) we have
		\[
		\P\left( |f(X) - \E f(X)| > t\right) \le \exp\left(-c\frac{t^2}{L^2 \left\| \max_{i} |X_i| \right\|^2_{\psi_2}}\right),
		\]
		where $c, C > 0$ are absolute constants.
	\end{lemma}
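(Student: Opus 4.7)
The plan is to reduce the unbounded case to the bounded one by truncation and invoke Talagrand's classical convex concentration inequality for bounded independent random variables, then remove the truncation by means of the $\psi_2$-hypothesis. Throughout I assume $L=1$ by rescaling and write $M = \bigl\|\max_i |X_i|\bigr\|_{\psi_2}$.

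\emph{Truncation.} For a threshold $R>0$ to be chosen, define $Y_i = X_i \mathbf{1}\{|X_i|\le R\}$. The $Y_i$ are independent with $|Y_i|\le R$. On the event $E_R = \{\max_i|X_i|\le R\}$ we have $Y=X$ and hence $f(X)=f(Y)$, while the $\psi_2$-hypothesis gives $\P(E_R^c)\le 2e^{-R^2/M^2}$. Since $f$ is convex and $1$-Lipschitz on $\mathbb{R}^n$, Talagrand's convex concentration inequality for bounded independent random variables yields
\[
\P\bigl(|f(Y)-\mathbb{M} f(Y)|>s\bigr) \;\le\; C_1\exp\!\left(-c_1 s^2/R^2\right)
\qquad \text{for every } s\ge 0.
\]

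\emph{Dyadic peeling.} A single choice $R=R(t)$ cannot balance the two error terms: the Talagrand bound scales as $e^{-cs^2/R^2}$ whereas the truncation loss scales as $e^{-R^2/M^2}$, and choosing $R$ linear in $t$ only yields a sub-exponential tail. To recover the stated sub-Gaussian rate, I would introduce a dyadic scale $R_k = 2^k M$ ($k\ge 0$, with $R_{-1}:=0$) and decompose
\[
\{|f(X)-\E f(X)|>t\} \;\subset\; \bigcup_{k\ge 0} \bigl\{|f(X)-\E f(X)|>t\bigr\}\cap\bigl\{R_{k-1}<\max_i |X_i|\le R_k\bigr\}.
\]
On the $k$-th layer $f(X)=f(Y^{(R_k)})$ and the truncated entries satisfy $|Y^{(R_k)}_i|\le R_k$, so Talagrand's inequality applies at scale $R_k$. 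Multiplying by the layer probability $\P(\max_i |X_i|>R_{k-1})\le 2 e^{-4^{k-1}}$ and summing the resulting geometric series in $k$ gives a bound of the form $\exp(-c t^2/M^2)$. The passage from median to expectation is routine: $|\E f(X)-\mathbb{M} f(X)|\lesssim \|f(X)-\mathbb{M} f(X)\|_{L_1}\lesssim M$ by integrating the tail bound, which is absorbed into the hypothesis $t\ge CM$.

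\emph{Main obstacle.} The delicate part is the dyadic balancing step: one needs to keep careful track of the shifts of the medians $\mathbb{M} f(Y^{(R_k)})$ between truncation levels (these shifts are at most of order $R_k$ by the Lipschitz property) and to verify that after paying these shifts together with the layer probabilities, the sum over $k$ still assembles into a genuinely sub-Gaussian tail at scale $M$, and not merely a sub-exponential one. Everything else --- the truncation, the Talagrand input, and the median-to-mean transfer --- is standard.
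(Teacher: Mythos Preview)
Your peeling scheme does not produce a sub-Gaussian tail. The step ``multiplying by the layer probability'' is unjustified: the Talagrand event for $f(Y^{(R_k)})$ and the layer event $\{R_{k-1}<\max_i|X_i|\le R_k\}$ are dependent, and conditioning on the layer destroys the independence of the coordinates (and shifts the centering), so Talagrand's bound cannot be applied conditionally either. The only legitimate estimate is
\[
\P\bigl(\text{layer }k\cap\{|f(X)-c|>t\}\bigr)\;\le\;\min\Bigl(C_1\, e^{-c_1 t^2/R_k^2},\;2\,e^{-4^{k-1}}\Bigr).
\]
With $u=t/M$ and $R_k=2^kM$ the two bounds read $e^{-c_1 u^2/4^k}$ and $2e^{-4^{k-1}}$; they cross at $4^k\sim u$, where both exponents are only of order $u$. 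Summing over $k$ therefore gives at best $\exp(-c'\,t/M)$ --- precisely the sub-exponential tail you flagged as the obstacle. (Even replacing the minimum by the Cauchy--Schwarz bound $\sqrt{\P(A)\P(B)}$ leaves the crossover exponent linear in $u$.) The median shifts can in fact be bounded by $O(M)$ uniformly in $k$, better than the $O(R_k)$ you suggest, but this does not rescue the argument.

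The paper avoids peeling entirely. It fixes a single truncation level $U=8\,\E\max_i|X_i|\lesssim M$, sets $W=X-Y$, and controls $|f(X)-f(Y)|\le L\|W\|$ by showing that $\|W\|$ itself is sub-Gaussian at scale $M$. This is where the Hoffman--J\o{}rgensen inequality enters: the choice of $U$ gives $\P(\max_i W_i^2>0)\le 1/8$ by Markov, so Proposition~6.8 of Ledoux--Talagrand yields $\E\|W\|^2\le 8\,\E\max_i W_i^2\lesssim M^2$, and Theorem~6.21 of the same reference then upgrades this to $\bigl\|\,\|W\|\,\bigr\|_{\psi_2}\lesssim M$. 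Combining the bounded-case Talagrand inequality for $f(Y)$ at the fixed scale $U$ with this $\psi_2$ control of $\|W\|$ and with $|\E f(X)-\E f(Y)|\le L\,\E\|W\|\lesssim LM$ gives the claim directly. This Hoffman--J\o{}rgensen step is the missing idea; without it, variable-level truncation cannot break the sub-exponential barrier.
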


	{Despite generalizing existing results on convex concentration, the result of Lemma \ref{convexunbound} follows easily from the truncation approach combined with the Hoffman-J\o{}rgensen inequality. As another application of this technique we provide a version of the matrix Bernstein inequality that holds for random matrices with subexponential spectral norm. For clarity of presentation, this inequality is first presented in Section \ref{matrixbern}. Finally, the same argument showing that it is not possible to replace $M = \bigl\| \max_{i} |X_i| \bigr\|_{\psi_2}$ by $K = \max_{i}\bigl\| X_i \bigr\|_{\psi_2}$ in Theorem \ref{mainthm} is used to show that the same is not possible in Lemma
	\ref{convexunbound}.}
	
	We sum up the structure of the paper: 
	\begin{itemize}
		\item Section~\ref{application} is devoted to applications and discussions and consists of several parts. At first, we give a simple proof of the uniform bound of \cite{Adam15} under the logarithmic Sobolev assumption. The second paragraph is devoted to improvements of the non-uniform Hanson-Wright inequality \eqref{HansonWright} in the subgaussian regime. Furthermore, we apply our techniques to obtain a uniform concentration result similar to Theorem~\ref{mainthm} in a particular case of non-independent components. We consider the Ising model under Dobrushin's condition, a setting that has been studied recently by \cite{Adam18} and \cite{Gotze18}. The question we study was raised by \cite{Marton03} in a closely related scenario. Finally, we show that it is not possible in general to replace \( \| \max_{i} |X_i| \|_{\psi_{2}} \) with \( \max_{i} \| X_{i} \|_{\psi_2} \) in Theorem~\ref{mainthm} by providing an appropriate counterexample.
		\item In Section \ref{expef} we present our proof of Theorem \ref{mainthm}. While doing so, we prove Lemma \ref{grad_trunc_lem} and Lemma \ref{convex_subgauss:lemmamain}. Finally, we give a proof of Proposition \ref{mainthm2}. 
		\item In Section \ref{matrixbern} we formulate and prove the dimension-free matrix Bernstein inequality that holds for random matrices with subexponential spectral norm. We demonstrate how our Bernstein inequality can be used in the context of covariance estimation for subgaussian observations improving the state-of-the-art result of \cite{Lounici14} for covariance estimation with missing observations.
	\end{itemize}

	\section{Some applications and discussions}\label{application}
	We begin with some {notation} that will be used throughout the paper. For a random vector $X$ taking its values in $\mathbb{R}^n$ let $X_1, \ldots, X_n$ denote its components. {When all components of $X$ are independent let $X_i'$ denote  {an} independent copy of the component $X_i$.} {Throughout the paper $C,c > 0$ are absolute constants that may change from line to line. We write $ a \lesssim b $ if \( a \leq C b \) for some absolute constant \( C > 0\). Moreover, if \( a \lesssim b \) and \( b \lesssim a \) we write \( a \sim b \).}
	
	{Furthermore, for a square matrix \( A \), denote by \( \Diag(A) \) the diagonal matrix that has the same elements on the diagonal as \(A \). The off-diagonal part of \( A\) is defined by \( \Off(A) = A - \Diag(A) \); we define \( \diag(\av) \) as a \( n \times n\) diagonal matrix with diagonal elements \( \av \in \R^{n} \). Finally, for two symmetric (Hermitian) matrices \( A, B \) we write \( A \prec B \) if \( B - A \)  is positive-definite and \( A \preceq B \) if \( B - A \)  is positive-semidefinite. }{In what follows we also use the following equivalent formulations of tail inequalities. Assume that for a random variable $Y$ and some $a, b > 0$ we have that for any $t \ge 1$,
	\[
	    \P(Y > \max(a\sqrt{t}, b t) ) \leq e^{-t} .
	\]
	The last inequality implies for any \( u \geq \max(a, b) \),
	\[
	    \P(Y > u) \leq \exp\left(-\min\left\{ \frac{u^2}{a^2}, \frac{u}{b} \right\} \right) \, ,
	\]
	and vice versa.
	}
	
	\subsubsection*{Uniform Hanson-Wright inequalities under the logarithmic Sobolev condition}
	\label{logsobolev}
	
	In this paragraph we recover {a} result of \cite{Adam15} under  Assumption~\ref{logSobolev}. Consider a random {variable \( Z_{\AA}(X) \)} defined by  \eqref{zdef}, a function of \( X \) that satisfies logarithmic Sobolev assumption \eqref{log-sobol_assume}.
	
	Following \cite{Adam15} we assume without loss of generality, that \( \mathcal{A} \) is a finite set of matrices. Then \( Z_{\AA} \) is Lebesgue-a.e. differentiable and
	\[
	\| \nabla Z_{\AA}(X) \| \leq 2 \sup_{A \in \AA} \| A X \|,
	\]
	bounded by a Lipschitz function of \( X \) with good concentration properties. 
	
	\begin{remark}
		Note that Assumption~\ref{logSobolev} applies only for smooth functions, so that a standard smoothing argument should be used (see e.g.  \cite{Ledoux2001}). For the sake of completeness we recall this argument in Section~\ref{smooth_approx:section}. In what follows in this section we assume that none of these potential technical problems appear.
	\end{remark}
	
	In particular, since \( X \) satisfies {the} logarithmic Sobolev condition with constant \( K \), we have by Theorem 5.3 in \cite{Ledoux2001} that
	\[
	\P\left( \sup_{A \in \AA} \| A X \| \ge \E\sup_{A \in \AA} \| A X \| + K \sqrt{t} \sup_{A \in \AA} \| A \|  \right) \leq e^{-t}.
	\]
	Taking square{s} and using \( (a + b)^{2} \leq 2a^{2} + 2b^{2} \) we get
	\[
	\P\left( \sup_{A \in \AA} \| A X \|^{2} \ge 2\left(\E\sup_{A\in\AA} \| A X \|\right)^{2} + 2 K^2 \sup_{A\in\AA} \| A \|^2 t  \right) \leq e^{-t}.
	\]
	Furthermore, the logarithmic Sobolev condition implies for any \( \lambda \in \R \)
	\[
	\Ent(e^{\lambda Z_{\AA}(X)}) \leq 4K^{2}\lambda^2 \E \sup_{A\in\AA} \| AX \|^{2} e^{\lambda Z_{\AA}(X)}.
	\]
	Therefore, by Lemma~\ref{grad_trunc_lem} it holds for any \( t \geq 0 \) that
	\[
	\P\left( | Z_{\AA}(X) - \E Z_{\AA}(X)| > C \left( K \E\sup_{A\in\AA} \| A X \| \sqrt{t} + K^{2} \sup_{A\in\AA} \| A \|t  \right)  \right) \leq 2e^{-t},
	\]
	which coincides with \eqref{adam} for \( K \)-concentrated vectors up to absolute constant factors.
	\begin{remark}
		This result may be used directly to prove the concentration for $\|\hat{\Sigma} - \Sigma\|$, where $\hat{\Sigma}$ is the sample covariance defined as $\hat{\Sigma} = \frac{1}{N}\sum_{i = 1}^{N}X_iX_i^\T$ and $X_1, \ldots, X_N$ are centered Gaussian vectors with the covariance matrix $\Sigma$ (see Theorem 4.1 in \cite{Adam15}). We return to the covariance estimation problem in {Section \ref{matrixbern}}.
	\end{remark}
	
	\begin{remark}
		{We note some additional connections between the convex concentration property \eqref{concproperty} and Assumption \ref{logSobolev}. It is known that \eqref{concproperty} follows from the logarithmic Sobolev inequality by taking $f(X)= \exp(\lambda (\varphi(X) - \E\varphi(X))/2)$ for $\lambda > 0$ which implies
	\[
	\Ent\left(\exp(\lambda (\varphi(X) - \E\varphi(X)))\right) \le \frac{K^2\lambda^2}{2}\E\exp(\lambda (\varphi(X) - \E\varphi(X))).
	\]
	This immediately implies \eqref{concproperty} via {the} standard Herbst argument, see \cite{boucheron2013concentration}. Moreover, the last inequality is equivalent to {the} concentration property. Indeed, from the concentration property we know that $\|\varphi(X) - \E\varphi(X)\|_{\psi_2} \lesssim K$ and this implies (see \cite{Vanhandel16}) that for all $\lambda \in \mathbb{R}$
	\[
	\Ent(\exp(\lambda (\varphi(X) - \E\varphi(X)))) \lesssim K^2\lambda^2\E\exp(\lambda (\varphi(X) - \E\varphi(X))).
	\]
	}
	\end{remark}
	
	\subsubsection*{Improving Hanson-Wright inequality in the subgaussian regime}
	\label{boundedcase}
	
	Our analysis implies, in particular, an improved version of Hanson-Wright inequality \eqref{HansonWright} in some cases. We consider a centered random vector $X = (X_1, \ldots, X_n)$ with independent subgaussian components and set $K = \max_{i}\|X_i\|_{\psi_2}$, $M = \|\max_{i}|X_i|\|_{\psi_2}$. In this case \eqref{HansonWright} implies that with probability at least $ 1 - 2e^{-t} $ we have
	\begin{equation}
	\label{firstineq}
	X^\T AX - \E X^\T AX \lesssim K^2\left(\|A\|_{\text{HS}}\sqrt{t} +  \| A \| t\right).
	\end{equation}
	At the same time, Theorem \ref{mainthm} for a single matrix \( \mathcal{A} = \{ A \} \) implies with the same probability
	\begin{equation}
	\label{secineq}
	X^\T AX - \E X^\T AX \lesssim  M\E\| A X \|  \sqrt{t} + M^2\| A \| t.
	\end{equation}
	Observe that when $|X_i| \le L$ almost surely for every \( i \leq n \), we have $M \lesssim \min\{K\sqrt{\log n}, L\}$.
	The following example illustrates the difference between these two bounds.
	\begin{example}
		Assume, {$\delta_{1}, \ldots, \delta_n$ are} independent Bernoulli random variables with the same mean $\delta$ and let $\delta \le \frac{1}{4}$. For $X = (\delta_1 - \delta, \ldots, \delta_n - \delta)$ we easily get 
		\[
		\E\| A X \| \le \sqrt{\E X^TA^2X} \leq  \sqrt{\delta}\|A\|_{\text{HS}}.
		\]
		On the other hand, for $\delta \le \frac{1}{4}$ we have
		\begin{align*}
		\|X_1\|_{\psi_2}^2 &= \|\delta_1 - \delta\|^2_{\psi_2} \sim \sup\limits_{\lambda \in \mathbb{R}}\frac{\log(\E\exp(\lambda(\delta_1 - \delta)))}{\lambda^2} 
		\\
		&= \sup\limits_{\lambda \in \mathbb{R}}\frac{\log(\delta\exp(\lambda(1 - \delta)) + (1 - \delta)\exp(-\lambda\delta))}{\lambda^2}
		=\frac{1 - 2\delta}{4\log((1 - \delta)/\delta)} \sim \frac{1}{|\log \delta|},
		\end{align*}
		where the last line follows directly from Theorem 1.1 in \cite{schlemm16} {(a result equivalent to Theorem 1.1 was also obtained in \cite{kontorovich13})}. Therefore, the standard Hanson-Wright inequality implies that with probability at least $1 - e^{-t}$ we have
		\[
		X^\T AX - \E X^\T AX \lesssim \frac{1}{|\log \delta|}\left(\|A\|_{\text{HS}}\sqrt{t} +   \| A \| t\right),
		\]
		while \eqref{secineq} and $M \lesssim \min\{K \sqrt{\log n}, 1\}$ imply that for $t \ge 1$ and $\delta \le \frac{1}{4}$ it holds with probability at least $1 - 2e^{-t}$ that
		\begin{equation}
		\label{improved}
		X^\T AX - \E X^\T AX \lesssim \min\left\{\sqrt{\frac{\delta\log n}{|\log \delta|}}, \sqrt{\delta}\right\}\|A\|_{\text{HS}}\sqrt{t} + \min\left\{\frac{\log n}{|\log \delta|}, 1\right\}\| A \|t.
		\end{equation}
		It is easy to verify that $\lim\limits_{\delta \to 0+}\sqrt{\delta}|\log \delta| = 0$, thus the inequality \eqref{improved} is better than Hanson-Wright inequality for this $X$ in the subgaussian regime (when the $t$-term is dominated by the $\sqrt{t}$-term).
	\end{example}
	
	\subsubsection*{Uniform concentration results in the Ising model}
	\label{ising}
	
	\def\hv{\mathbf{h}}
	\def\ddiff{\mathfrak{d}}
	\def\cond{\; | \;}
	
	Consider a random vector \( \sigma \in \{ -1, 1 \}^{n} \) with the distribution defined by
	\[
	\pi( \sigma ) = \frac{1}{Z^{\prime}} \exp\left( \sum_{i,j = 1}^{n} J_{ij} \sigma_{i} \sigma_{j} - \sum_{i = 1}^{n} h_{i} \sigma_{i}   \right),
	\]
	where \( Z^{\prime} \) is a normalizing factor. This distribution defines the \emph{Ising model} with parameters \( J = (J_{ij})_{i,j = 1}^{n} \)  and \( \hv = (h_i)_{i = 1}^{n} \). For an arbitrary function \( f \) on \( \{ -1, 1 \}^{n} \) denote a difference operator,
	\[
	| \ddiff f|^{2}(\sigma)  = \frac{1}{2} \sum_{i = 1}^{n} (f(\sigma) - f(T_i \sigma))^{2} \pi(-\sigma_i \cond \sigma_1, \dots, \sigma_{i-1}, \sigma_{i + 1}, \dots   ),
	\]
	where the operator \( T_{i} \sigma = (\sigma_1, \dots, \sigma_{i-1}, -\sigma_{i}, \sigma_{i + 1}, \dots  ) \) flips the sign of the \(i\)-th component, and \( \pi( \cdot \cond \sigma_1, \dots, \sigma_{i-1}, \sigma_{i + 1}, \dots   )  \) is conditional distribution of the \( i \)-th component given the rest of the elements. The following recent result provides the logarithmic Sobolev inequality for \( \sigma \) under Dobrushin-type conditions.
	
	\begin{theorem}[Proposition 1.1, \cite{Gotze18}]
		\label{gotze_logsob:thm}
		Suppose, \( \| \hv \|_{\infty} \leq \alpha \) and \( J \) satisfies \( J_{ii} = 0 \) and
		\begin{equation}
		\label{dobrushin}
		\| J \|_{1 \mapsto 1} = \max_{i =1,\dots, n} \sum_{j=1}^{n} |J_{ij}| \leq 1 - \rho 
		\end{equation}
		There is a constant \( C = C(\alpha, \rho) \), such that for an arbitrary function \( f \) on \( \{ -1,1\}^{n} \) we have
		\[
		\Ent( f^2 ) \leq 2C \E |\ddiff f|^2 .
		\]
	\end{theorem}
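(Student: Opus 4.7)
The plan is to combine a single-site log-Sobolev bound for each conditional distribution with an approximate tensorization of entropy that holds under Dobrushin's condition.

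First, I would analyze the one-dimensional conditional laws. For each site $i$, the conditional distribution $\pi(\sigma_i = \cdot \mid \sigma_{\sim i})$ is a two-point distribution driven by an effective field of the form $b_i = -h_i + \sum_j (J_{ij} + J_{ji})\sigma_j$. The hypotheses $\|\hv\|_\infty \le \alpha$ and $\|J\|_{1 \to 1} \le 1 - \rho$ imply the uniform bound $|b_i| \le 2(1-\rho) + \alpha$, so both conditional probabilities are bounded away from $\{0, 1\}$ by some $q = q(\alpha, \rho) > 0$. The classical log-Sobolev inequality for two-point measures (with constant of order $\log((1-q)/q)/(1-2q)$) then gives, for each fixed realization of $\sigma_{\sim i}$,
\[
\Ent_i(f^2) \le c_0(\alpha, \rho)\, \E_i\bigl[(f(\sigma) - f(T_i \sigma))^2 \bigr],
\]
where $\Ent_i$ and $\E_i$ refer to entropy and expectation with respect to $\pi(\cdot \mid \sigma_{\sim i})$.

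Second, I would invoke an approximate tensorization of entropy of the form
\[
\Ent_\pi(f^2) \le C(\rho)\, \sum_{i=1}^{n} \E_\pi\bigl[\Ent_i(f^2)\bigr],
\]
with a constant depending only on $\rho$. To obtain this, one estimates the Dobrushin interdependence matrix whose entries are total variation distances between conditional laws differing at a single coordinate; under \eqref{dobrushin} its operator norm is bounded strictly below $1$ (up to a $\tanh$ factor coming from the explicit form of the two-point law), which makes Marton's contractive entropy decomposition applicable. An alternative route is to run the single-site Glauber dynamics and show that a full sweep contracts relative entropy by a fixed factor via the block-dynamics machinery of Cesi and Lu--Yau.

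Combining the two steps and absorbing the conditional weights $\pi(-\sigma_i \mid \sigma_{\sim i}) \ge q$ that appear in the definition of $|\ddiff f|^2$ into the constants, one arrives at
\[
\Ent_\pi(f^2) \le 2 C(\alpha, \rho)\, \E_\pi |\ddiff f|^2,
\]
which is the claimed inequality. The main obstacle is the approximate tensorization: the one-variable step is a routine two-point computation, but producing a dimension-free constant $C(\rho)$ requires a genuinely quantitative use of Dobrushin's condition, either through a contractive coupling / transportation inequality in the spirit of \cite{Marton03} or through the spectral analysis of Glauber dynamics, and this is where the bulk of the work lies.
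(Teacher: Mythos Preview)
There is nothing in the paper to compare your proposal against: this theorem is quoted verbatim from \cite{Gotze18} (it is their Proposition~1.1) and the present paper does not supply a proof. It is used as a black box input to the Proposition that follows, which derives a uniform Hanson--Wright-type bound in the Ising model by combining Theorem~\ref{gotze_logsob:thm} with Lemma~\ref{grad_trunc_lem}.

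Your sketch is, in broad strokes, the strategy that \cite{Gotze18} actually follow: a uniform one-site log-Sobolev inequality (from the bound on the effective local field) combined with an approximate tensorization of entropy driven by Dobrushin's condition. So the proposal is sensible as a proof outline of the cited result, but it is not a comparison item for this paper, which simply imports the statement.
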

	\begin{remark}
		Following \cite{Gotze18} the condition \eqref{dobrushin} will be called \emph{Dobrushin's condition}.
	\end{remark}
	We may obtain the following uniform concentration result which is a simple outcome of our Lemma~\ref{grad_trunc_lem} and Theorem \ref{gotze_logsob:thm}.
	\begin{proposition}
		Let \( \mathcal{A} \) be a finite set of symmetric matrices with zero diagonal. It holds in the Ising model under Dobrushin's condition and \( \| \hv \|_{\infty} \leq \alpha \) that for any $t \ge 0$
		\begin{equation}
		\label{finalresult}
		\P\left( \sup_{A \in \mathcal{A}} \sigma^{\T} A \sigma  - \E \sup_{A \in \mathcal{A}} \sigma^{\T} A \sigma  \ge t \right) \le \exp\left(-C\min\left(\frac{t^2}{(\E\sup\limits_{A \in \mathcal{A}}\|A\sigma\| + \sup_{A \in \mathcal{A}} \| A \|)^2}, \frac{t}{\sup\limits_{A \in \mathcal{A}}\|A\|}\right)\right),
		\end{equation}
		where \( C \) depends only on \( \alpha, \rho \).
	\end{proposition}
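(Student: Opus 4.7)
The proof combines the discrete log-Sobolev inequality of Theorem~\ref{gotze_logsob:thm} with the decoupling Lemma~\ref{grad_trunc_lem}. Writing $Z = Z_{\mathcal{A}}(\sigma) = \sup_{A \in \mathcal{A}}\sigma^{\T}A\sigma$, the first target is the modified log-Sobolev estimate
\[
\Ent(e^{\lambda Z}) \lesssim \lambda^{2}\,\E\!\left[\sup_{A \in \mathcal{A}}\|A\sigma\|^{2}\,e^{\lambda Z}\right]\quad\text{for every }\lambda \ge 0,
\]
with the implicit constant depending only on $\alpha$ and $\rho$. This matches the hypothesis of Lemma~\ref{grad_trunc_lem} with the random variable $W \sim \sup_{A \in \mathcal{A}}\|A\sigma\|^{2}$, after which it remains only to establish a subexponential tail bound for $W$ and combine everything.

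To obtain the displayed estimate, I plug $f = e^{\lambda Z/2}$ into Theorem~\ref{gotze_logsob:thm} to get $\Ent(e^{\lambda Z}) \le 2C_{\alpha,\rho}\E|\ddiff e^{\lambda Z/2}|^{2}$, and then use the mean-value bound $|e^{u}-e^{v}|^{2} \le (u-v)^{2}\max(e^{2u},e^{2v})$ to control each coordinate term of $|\ddiff e^{\lambda Z/2}|^{2}$ by $\tfrac{\lambda^{2}}{4}(Z(\sigma)-Z(T_{i}\sigma))^{2}\max(e^{\lambda Z(\sigma)},e^{\lambda Z(T_{i}\sigma)})$. Since every $A \in \mathcal{A}$ has zero diagonal, a direct computation gives the two key identities $\sigma^{\T}A\sigma - (T_{i}\sigma)^{\T}A(T_{i}\sigma) = 4\sigma_{i}(A\sigma)_{i}$ and $(A(T_{i}\sigma))_{i} = (A\sigma)_{i}$. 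Picking the argmax matrix on the larger side of the difference therefore yields $(Z(\sigma)-Z(T_{i}\sigma))^{2} \le 16(A_{*}(\sigma)\sigma)_{i}^{2}$ when $Z(\sigma) \ge Z(T_{i}\sigma)$, and $(Z(\sigma)-Z(T_{i}\sigma))^{2} \le 16(A_{*}(T_{i}\sigma)\sigma)_{i}^{2}$ in the opposite case. The first case sums immediately over $i$ to at most $\|A_{*}(\sigma)\sigma\|^{2}e^{\lambda Z(\sigma)} \le \sup_{A}\|A\sigma\|^{2}e^{\lambda Z}$. For the second case I will invoke the Gibbs-sampler reversibility identity $\E[F(\sigma)\pi(-\sigma_{i}\mid\sigma_{-i})] = \E[F(T_{i}\sigma)\pi(-\sigma_{i}\mid\sigma_{-i})]$, which follows from $\pi(\sigma)\pi(-\sigma_{i}\mid\sigma_{-i}) = \pi(T_{i}\sigma)\pi(\sigma_{i}\mid\sigma_{-i})$, to swap $\sigma \leftrightarrow T_{i}\sigma$ inside the expectation; applying the zero-diagonal identity $(A(T_{i}\sigma))_{i} = (A\sigma)_{i}$ once more collapses the shifted term to $\E[(A_{*}(\sigma)\sigma)_{i}^{2}e^{\lambda Z(\sigma)}\pi(-\sigma_{i}\mid\sigma_{-i})]$, which again sums to at most $\E[\sup_{A}\|A\sigma\|^{2}e^{\lambda Z}]$.

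To furnish the required subexponential tail for $W$, I observe that $\sigma \mapsto \sup_{A \in \mathcal{A}}\|A\sigma\|$ is $\sup_{A}\|A\|$-Lipschitz in the Euclidean norm, so Theorem~\ref{gotze_logsob:thm} together with the standard Herbst argument produces $\P\bigl(\sup_{A}\|A\sigma\| \ge \E\sup_{A}\|A\sigma\| + c'\sup_{A}\|A\|\sqrt{t}\bigr) \le e^{-t}$. Squaring yields $\P(W > L + \theta t) \le e^{-t}$ with $L \sim (\E\sup_{A}\|A\sigma\|)^{2}$ and $\theta \sim \sup_{A}\|A\|^{2}$. Lemma~\ref{grad_trunc_lem} applied with these parameters delivers the announced inequality, because $\sqrt{\theta} \sim \sup_{A}\|A\|$ and $L + \theta \sim \bigl(\E\sup_{A}\|A\sigma\| + \sup_{A}\|A\|\bigr)^{2}$.

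The most delicate step is the bound on $|\ddiff e^{\lambda Z/2}|^{2}$ in the case $Z(T_{i}\sigma) > Z(\sigma)$: a naive estimate leaves the factor $e^{\lambda Z(T_{i}\sigma)}$ at a shifted configuration, and summing $(A_{*}(T_{i}\sigma)\sigma)_{i}^{2}$ coordinate-by-coordinate does not reassemble into $\sup_{A}\|A\sigma\|^{2}$. The combination of the reversibility identity for the Gibbs sampler and the zero-diagonal identity $(A(T_{i}\sigma))_{i} = (A\sigma)_{i}$ is the mechanism that lets the shifted term be re-expressed at $\sigma$; keeping those two ingredients simultaneously aligned is the main obstacle I anticipate.
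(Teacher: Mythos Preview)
Your approach is essentially the same as the paper's. The paper introduces an auxiliary variable $\sigma_i'$, conditionally independent of $\sigma_i$ given $\sigma^{-i}$, and uses the resulting symmetry to rewrite $\E|\ddiff f|^2 = \sum_i \E (f(\sigma)-f(\sigma'_{(i)}))_+^2$; this is exactly your reversibility identity $\pi(\sigma)\pi(-\sigma_i\mid\sigma^{-i}) = \pi(T_i\sigma)\pi(\sigma_i\mid\sigma^{-i})$ packaged differently. Once one has only the positive part, the paper uses the cleaner bound $(e^{\lambda x/2}-e^{\lambda y/2})^2 \le \tfrac{\lambda^2}{4}e^{\lambda x}(x-y)^2$ for $x\ge y$, avoiding your case split, but the content is identical. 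The rest (argmax trick, then Lemma~\ref{grad_trunc_lem}) matches.

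There is one gap in your justification for the concentration of $\sup_{A}\|A\sigma\|$. You argue that this function is $\sup_A\|A\|$-Lipschitz in the Euclidean norm and then invoke Theorem~\ref{gotze_logsob:thm} plus Herbst. But Theorem~\ref{gotze_logsob:thm} is stated for the discrete operator $|\ddiff f|^2 = \tfrac12\sum_i (f(\sigma)-f(T_i\sigma))^2\pi(-\sigma_i\mid\sigma^{-i})$, and for a generic $L$-Lipschitz function one only gets $(f(\sigma)-f(T_i\sigma))^2\le 4L^2$ coordinate-wise, hence $|\ddiff f|^2 \le 2L^2\sum_i \pi(-\sigma_i\mid\sigma^{-i})$, which can be of order $nL^2$ rather than $L^2$. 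Lipschitz alone is therefore not enough. What actually works---and what the paper does---is to write $\sup_A\|A\sigma\| = \sup_{A,\gamma\in S^{n-1}} \gamma^\top A\sigma$ and repeat the same argmax device you used for $Z$: with $\tilde w = A^{*\top}\gamma^*$ the maximizer at $\sigma$, one has $\sum_i(\sup_{A,\gamma}\gamma^\top A\sigma - \sup_{A,\gamma}\gamma^\top A\,T_i\sigma)_+^2 \le \sum_i (2\tilde w_i)^2 \le 4\sup_A\|A\|^2$, and then your reversibility trick handles the other case. With this correction, your argument goes through and coincides with the paper's.
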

	\begin{proof}
		Let \( \sigma_{(i)}' = ( \sigma_1, \dots, \sigma_{i-1}, \sigma_i', \sigma_{i + 1}, \dots ) \), where given all but the \(i\)-th element of \( \sigma \), the variables \( \sigma_{i} \) and \( \sigma_{i}' \) are independent and are distributed according to \( \pi( \cdot \cond \sigma_1, \dots, \sigma_{i-1}, \sigma_{i + 1}, \dots   ) \). Obviously, we may have all \( \sigma_{1}, \dots, \sigma_{i} \) and \( \sigma_{1}', \dots, \sigma_{n}' \) defined on the same discrete probability space, and thus we will use the notation \( \pi(\cdot) \) and \( \pi(\cdot \cond \cdot) \) for the distribution and the conditional distribution. Therefore, we have
		\begin{align*}
		\E | \ddiff f|^{2}(\sigma)   & =  \frac{1}{2} \sum_{i = 1}^{n} \E (f(\sigma) - f(T_i \sigma))^{2} \pi(-\sigma_i \cond \sigma_1, \dots, \sigma_{i-1}, \sigma_{i + 1}, \dots   ) \\
		& =  \sum_{i = 1}^{n} \sum_{\sigma \in \{-1, 1\}^n} \pi(\sigma) \sum_{\sigma_{i}' \in \{-1, 1\}} (f(\sigma) - f(\sigma_{(i)}'))_{+}^{2} \pi( \sigma_i' \cond \sigma_1, \dots, \sigma_{i-1}, \sigma_{i + 1}, \dots   )  
		\end{align*}
		where we switched from \( \frac{1}{2} (f(\sigma) - f(\sigma_{(i)}'))^{2} \) to \( (f(\sigma) - f(\sigma_{(i)}'))_{+}^{2} \) due to the symmetry between \( \sigma_i \) and \( \sigma_i' \). 
		
		Denoting \( \sigma^{-i} = (\sigma_1, \dots, \sigma_{i-1}, \sigma_{i + 1}, \dots,   \sigma_n) \) and using the independence of \( \sigma_{i} \) and \( \sigma_{i}' \) given \( \sigma^{-i} \) we observe that \( \pi(\sigma_{i}, \sigma_{i}' \cond \sigma^{-i}) = \pi(\sigma_{i} \cond \sigma^{-i}) \pi( \sigma_{i}' \cond \sigma^{-i} )  \). {Moreover, it follows from the definition of conditional probability that}
		\begin{align*}
		\pi(\sigma) \pi( \sigma_i' \cond \sigma_1, \dots, \sigma_{i-1}, \sigma_{i + 1}, \dots   ) &= \pi( \sigma^{-i} ) \pi(\sigma_{i} \cond \sigma^{-i}) \pi( \sigma_{i}' \cond \sigma^{-i} ) \\
		&= \pi( \sigma^{-i} ) \pi(\sigma_{i}, \sigma_{i}' \cond \sigma^{-i}) = \pi(\sigma_{i}', \sigma_{i}, \sigma^{-i} ).
		\end{align*}
		Finally, we get
		\[
		\E | \ddiff f|^{2}(\sigma) = \sum_{i = 1}^{n} \sum_{ (\sigma, \sigma_{i}') \in \{-1, 1\}^{n+1} } (f(\sigma) - f(\sigma_{(i)}'))^{2}_{+} \pi(\sigma, \sigma_i') = \sum_{i = 1}^{n} \E (f(\sigma) - f(\sigma_{(i)}'))^{2}_{+} \, .
		\]
		Now we want to consider the function
		\begin{equation}
		\label{Z_ising}
		{Z_{\AA}(\sigma)} = \sup_{A \in \mathcal{A}} \sigma^{\T} A \sigma,
		\end{equation}
		where \( \mathcal{A} \) is a given {finite} set of symmetric matrices with zero diagonal (the diagonal is not important here, since \( \sigma_i^2 = 1 \)).
		{Let us apply Theorem \ref{gotze_logsob:thm} to \( f(\sigma) = e^{\lambda Z_{\AA}(\sigma) / 2} \). Since for \( x \geq y \) and \( \lambda \geq 0\) we have \( (e^{\lambda x / 2} - e^{\lambda y/2})^2 = e^{\lambda x} (1 - e^{-\lambda (x - y)/2})^2 \leq \frac{\lambda^2}{4} e^{\lambda x} ( x  -  y)^2\), it holds that}
		\begin{align*}
		\E | \ddiff f|^{2}(\sigma) &= \E \sum_{i = 1}^{n} (f(\sigma) - f(\sigma_{(i)}'))_{+}^{2} = \E e^{\lambda Z_{\AA}(\sigma)} \sum_{i = 1}^{n} (1 - e^{-\lambda (Z_{\AA}(\sigma) - Z_{\AA}(\sigma_{(i)}'))/2}   )_{+}^{2} \\
		& \leq 
		\frac{\lambda^2}{4} \E e^{\lambda Z_{\AA}(\sigma)} \sum_{i = 1}^{n} (Z_{\AA}(\sigma)  - Z_{\AA}(\sigma_{(i)}'))_{+}^{2},
		\end{align*}
		where for \( \widetilde{A} \) (maximizer of \eqref{Z_ising}) we have,
		\begin{align*}
		\sum_{i = 1}^{n} (Z_{\AA}(\sigma)  - Z_{\AA}(\sigma_{(i)}'))_{+}^{2} & \leq  \sum_{i = 1}^{n} \left( \sigma^{\T} \widetilde{A} \sigma - [\sigma_{(i)}']^{\T} \widetilde{A} \sigma_{(i)}'  \right)_{+}^{2}
		=
		\sum_{i = 1}^{n} \left( 2 (\sigma_i - \sigma_i') \sum_{j = 1}^{n} \widetilde{A}_{ij} \sigma_{j}  \right)_{+}^{2} \\
		& \leq  16 \sup_{A \in \mathcal{A}} \| A \sigma \|^2.
		\end{align*}
		Note that concentration for \( \sup_{A \in \mathcal{A}} \| A \sigma \| \) is implied by the same result. Indeed, we have
		\begin{align*}
		\sum_{i = 1}^{n} \left(\sup_{A \in \mathcal{A}, \gamma \in S^{n - 1}} \gamma^{\T} A \sigma  - \sup_{A \in \mathcal{A}, \gamma \in S^{n - 1}} \gamma^{\T} A \sigma_{(i)}' \right)_{+}^{2}
		& \leq 
		\sum_{i = 1}^{n} (\tilde{w}^{\T}\sigma - \tilde{w}^{\T} \sigma_{(i)}')_{+}^{2} \\
		& =  \sum_{i = 1}^{n} (\tilde{w}_{i} (\sigma_i - \sigma_i'))_{+}^{2} \leq 4 \sup_{A\in\AA} \| A \|,
		\end{align*}
		where \( \widetilde{w}^{\T} = \gamma^{\T} A \) is such that \( \sup_{A\in \mathcal{A}} \| A \sigma \| = \widetilde{w}^{\T} \sigma \). Thus, the expectation of the corresponding difference operator is bounded by \( 4 \sup_{A\in \mathcal{A}} \| A \| \). Therefore, due to {the} standard Herbst argument {(Proposition~{6.1} in \cite{boucheron2013concentration})} Theorem~\ref{gotze_logsob:thm} implies
		\[
		\P\left( \sup_{A \in \mathcal{A}} \| A \sigma \| > \E \sup_{A \in \mathcal{A}} \| A \sigma \| + C \sup_{A\in \mathcal{A}} \| A \| \sqrt{t}  \right) \leq e^{-t}.
		\]
		To sum up, by Theorem~\ref{gotze_logsob:thm} we have
		\[
		\Ent( e^{\lambda Z_{\mathcal{A}}(\sigma)} ) \leq \lambda^{2} \E (4 \sup_{A \in \mathcal{A}} \| A \sigma \| ) e^{\lambda Z_{\mathcal{A}}(\sigma)}.
		\]
		It is left to apply Lemma~\ref{grad_trunc_lem} which finishes the proof of the following inequality
		\begin{equation}
		\label{finalresult}
		\P\left( \sup_{A\in\AA} \sigma^{\T} A \sigma  - \E \sup_{A\in\AA} \sigma^{\T} A \sigma  > C (\sqrt{t} \E \sup_{A\in\AA} \| A \sigma\| + (\sqrt{t} + t) \sup_{A\in\AA} \| A \| )    \right) \geq 1 - e^{-t},
		\end{equation}
		where \( C \) only depends on \( \alpha, \rho \) from Theorem~\ref{gotze_logsob:thm}. The claim follows.
	\end{proof}
	\begin{remark}
		In the case that $\mathcal{A} = \{A\}$ our result implies the upper tail of the recent concentration inequality proved in \cite{Adam18} (see Theorem 2.2 and Example 2.5). To show this fact (denoting $\overline{\sigma} = \sigma - \E\sigma$) we observe that 
		\[
		\E\| A \sigma\| \le \E \| A \overline{\sigma}\| + \|A\E\sigma\| = \E \| A \overline{\sigma}\|  + \bigl(\sum_{i = 1}^n(\sum_{j = 1}^nA_{i, j}\E\sigma_j)^2\bigr)^{\frac{1}{2}}.
		\]
		Now, it is well known that $\Ent( f^2 ) \leq 2c \E |\ddiff f|^2$ implies the Poincare {inequality} $\mathrm{Var}( f ) \leq c \E |\ddiff f|^2$. Therefore, we have
		\[
		\|\E \overline{\sigma}\ \overline{\sigma}^\T\| = \sup\limits_{u \in S^{n - 1}}\mathrm{Var}(u^T\overline{\sigma}) \le (c(\alpha, \rho)/2)\sup\limits_{u \in S^{n - 1}}4\|u\|^2 = 2c(\alpha, \rho).
		\]
		This implies,
		\[
		\E \| A \overline{\sigma}\|^2  = \E\tr(A^2\overline{\sigma}~\overline{\sigma}^\T) \le \|A\|^2_{HS}\|\E \overline{\sigma}~\overline{\sigma}^T\| \le 2c(\rho, \alpha) \|A\|^2_{HS},
		\]
		where we used that $\tr(BD) \le \tr(B)\|D\|$ which holds for any pair of symmetric and nonnegative matrices $B, D$. Finally, we have
		\[
		{\E}\| A \sigma\| \le C(\rho, \alpha)\|A\|_{\text{HS}} + \left(\sum_{i = 1}^n\left(\sum_{j = 1}^nA_{i, j}\E\sigma_j\right)^2\right)^{\frac{1}{2}}.
		\]
		The right-hand side term appears instead of ${\E}\| A \sigma\|$ in aforementioned Example 2.5.
	\end{remark}
	
	\subsubsection*{Replacing \( \| \max_{i} |X_i| \|_{\psi_{2}} \) with \( \max_{i} \| X_{i} \|_{\psi_2} \) in Theorem~\ref{mainthm} {and Lemma \ref{convexunbound}}}
	\label{logarithm}
	
	Here we show that it is essentially not possible to substitute \( \| \max_{i} |X_i| \|_{\psi_{2}} \) with \( \max_{i} \| X_{i} \|_{\psi_2} \) in Theorem~\ref{mainthm} by presenting a concrete counterexample which was kindly suggested by Rados{\l}aw Adamczak. 
	Suppose the opposite: there is an absolute constant \( C > 0 \) such that for any set of matrices \( \mathcal{A} \) and any subgaussian random variables \( X_1, \dots, X_n \) it holds with probability at least \( 1 - e^{-t} \) that
	\begin{equation}\label{counter_assume}
	Z_{\mathcal{A}}(X) \leq C \left(\E Z_{\mathcal{A}}(X) + \max_{i} \| X_{i} \|_{\psi_2} \sqrt{t} \E \sup_{A\in\AA} \| A X \| + \max_{i} \| X_{i} \|_{\psi_2}^2 \sup_{A\in\AA}\| A \| t   \right),
	\end{equation}
	which implies that for some other constant \( C' > 0 \) we have
	\[
	\E^{1/2} Z_{\mathcal{A}}(X)^{2} \leq C' \left(\E Z_{\mathcal{A}}(X) + \max_{i} \| X_{i} \|_{\psi_2} \E \sup_{A\in\AA} \| A X \| + \max_{i} \| X_{i} \|_{\psi_2}^2 \sup_{A\in\AA}\| A \| \right).
	\]
	Notice that here we allow a multiplicative constant not equal to $1$ in front of the expectation. Let us take \( \mathcal{A} = \{ A^{(1)}, \dots, A^{(n)} \} \) with \( A^{(i)} \) having only one nonzero element \( A^{(i)}_{ii} = 1 \). For the sake of simplicity we take i.i.d. \( X_1, \dots, X_n \) with \( \E X_{i}^{2} = 1 \). This implies
	\[
	Z_{\mathcal{A}}(X)  
	= \max_{i \leq n} (X_{i}^{2} - 1),
	\qquad
	\sup_{A \in \AA} \| A X \| = \max_{i \leq n} |X_i| ,
	\qquad
	\sup_{A \in \AA} \| A \| = 1.
	\]
	Assuming that \( \| X_1 \|_{\psi_2} \leq 4 \) we have
	\[
	\bigl\| \max_{i \leq n} X_{i}^{2} - 1 \bigr\|_{L_2} \leq C' \left(\E \max_{i \leq n} (X_{i}^{2} -1) + 4 \E \max_{i \leq n} |X_i| + 16 \right),
	\]
	which, since \( \| \max_{i \leq n} X_{i}^{2} \|_{L_1} \geq \max_{i \leq n} \| X_i \|_{L_2} = 1  \), implies
	\[
	\| \max_{i \leq n} X_{i}^{2} \|_{L_2} \leq 1 + C' (\| \max_{i \leq n} X_{i}^{2} \|_{L_1} + 4 \E \max_{i \leq n} |X_i| + 15 )
	\leq (1 + 20 C') \| \max_{i \leq n} X_i^2 \|_{L_1}.
	\]
	Note that this inequality also holds if we rescale \( X_i' = \alpha X_i \) for an arbitrary \( \alpha > 0 \). Therefore, if \( \| X_1 \|_{\psi_2} \leq 4 \| X_1 \|_{L_2}  \), we can always rescale our random variables to have \(  \| X_1 \|_{L_2}= 1 \) and \( \| X_1 \|_{\psi_2} \leq 4  \), so that the above inequality still holds.
	
	Taking the latter into account we conclude that there is a constant \( D > 0 \), such that if a centered random \( X_1 \) satisfies \( \| X_1 \|_{\psi_2} \leq 4 \| X_{1} \|_{L_2} \), then for any \( n \geq 1 \) the following inequality holds
	\begin{equation} \label{hypercontr}
	\| \max_{i \leq n} X_{i}^{2} \|_{L_2} \leq D \| \max_{i \leq n} X_{i}^2\|_{L_1} .
	\end{equation}
	
	It is known that such hypercontractivity of maxima implies certain regularity of tails of \( X_1^2 \). In this case by Theorem~4.6 in \cite{Hitczenko98} for any \( \rho, \varepsilon > 0 \) there is another constant \( A = A(D, \rho, \varepsilon) > 1 \) such that for every \( t \geq t_{0} = \rho \| X_1 \|_{L_1} \) we have
	\[
	A \P(X_1^2 > A t ) \leq  \varepsilon \P(X_1^2 > t),
	\]
	{so that} taking \( \rho = \varepsilon = 1 \), there is \( A = A(D) > 1 \) such that for all \( t \geq \| X_1 \|_{L_1} \) we have
	\begin{equation}\label{tail_regu}
	\P(X_1^2 > At) \leq \frac{1}{A} \P (X_1^2 > t).
	\end{equation}
	The latter does not have to hold for {every} subgaussian random variable \( X_1 \). For instance, taking a symmetric random variable \( X_1 \) with \( \P(|X_1| = 1) = 1 - e^{-r} \) and  \( \P(|X_1| = \sqrt{r}) = e^{-r} \)  for \( r \geq 4 > 4 \log 2 \) we have
	$
	\E \exp \left(\frac{|X_1|^2}{2} \right) = e^{\frac{1}{2}} (1 - e^{-r}) + e^{-r + \frac{r}{2}} \leq e^{\frac{1}{2}} + e^{-\frac{r}{2}} \leq 2,
	$
	which implies \( \| X_1 \|_{\psi_2} \le \sqrt{2} \). Moreover, for \( r \geq 4 \) we also have
	$
	\E X_1^2 \geq 1 - e^{-\frac{r}{2}} \geq \frac{1}{2},	
	$
	thus \( \| X_1 \|_{L_2} \geq 1/ \sqrt{2} \) and the conditions of \eqref{hypercontr} are satisfied. But for large enough \( r > At \) and for \( t = t_{0}\), we have
	\[
	\P\left( X_1^2 > At \right) = \P(X_1^2 > t) = e^{-r},
	\]
	therefore breaking the tail regularity \eqref{tail_regu}.
	Therefore, it is impossible to establish {an} inequality of the form \eqref{counter_assume}. We note that it is also possible to prove that \eqref{hypercontr} may not hold for $X_1$ defined above via some direct {calculations}.
	
	For the same reason it is not possible to replace \( \| \max_{i \leq n} |X_i| \|_{\psi_2} \) with \( \max_{i \leq n} \| X_i \|_{\psi_2} \) in Lemma~\ref{convex_subgauss:lemmamain}. Indeed, suppose that for any convex \( L \)-Lipschitz function \( f \) we have
	\[
	\P\left( f(X) \leq C (\E f(X) + L \max_{i \leq n} \| X_i \|_{\psi_2} \sqrt{t}  ) \right) \leq e^{-t} .
	\]
	Taking \( f(X) = \max_{i \leq n} |X_i| \), which is convex and \(1\)-Lipschitz, we get
	\[
	\bigl\| \max_{i \leq n} X_i^2 \bigr\|_{L_2} = \bigl\| \max_{i \leq n} |X_i| \bigr\|_{L_4} \leq C' \left( \E \max_{i\leq n} |X_i| + \max_{i\leq n} \| X_i \|_{\psi_2}  \right).
	\]
	The same choice of $X_1$ implies \eqref{hypercontr} and leads to a contradiction.
	
	\section{Proof of Theorem \ref{mainthm}}
	\label{expef}
	
	\def\at{\tilde{a}}
	\def\gammavt{\tilde{\gammav}}
	\def\At{\widetilde{A}}
	
	In this section we assume that the components of $X$ are independent. We recall that $X_i'$ denotes an independent copy of the component $X_i$. The main tool of the proof is the modified logarithmic Sobolev inequality (see Theorem 2 in \cite{Boucheron03} or Theorem~{6.15} in \cite{boucheron2013concentration}). {For the sake of brevity we denote \( Z = Z_{\AA}(X) \) in this section. Let us set}
	\[
	Z_i' = Z_{\AA}(X^{(i)}), \qquad X^{(i)} = (X_1, \dots, X_{i-1}, X_{i}', X_{i}, \dots, X_n).
	\]
	Then by the symmetrized version of the inequality we have that for any \( \lambda \),
	\[
	\Ent(e^{\lambda Z}) \leq  \sum_{i = 1}^{n} \E e^{\lambda Z} \tau( -\lambda (Z - Z_{i}'  )_{+} ),
	\]
	where \( \tau(x) = x(e^{x}-1) \). Since \( \tau(x) \leq x^{2} \) for \( x \leq 0 \), we have for all \( \lambda \geq 0 \),
	\begin{equation}
	\label{mod_log-sob}
	\Ent(e^{\lambda Z}) \leq \lambda^{2} \E V_{+} e^{\lambda Z},
	\qquad V_{+} := \E' \sum_{i = 1}^{n} (Z - Z_i')_{+}^{2}.
	\end{equation}
	The right-hand side of the inequality can be ``decoupled'' by the variational entropy formula \eqref{varformula}, as it is done in the proof of Lemma \ref{grad_trunc_lem} {which} was presented in the introduction.
	
	\begin{proof}[Proof of Lemma \ref{grad_trunc_lem}]
		We have
		\[
		\Ent(e^{\lambda Z}) \leq \lambda^{2} L {\E} e^{\lambda Z} + \lambda^{2} \E (W - L)_{+} e^{\lambda Z}.
		\]
		Due to the deviation bound for \( W \) it holds for some absolute constant {\( C > 0 \) that}
		\[
		\E \exp\left( \frac{(W - L)_{+}}{C \theta}  \right) \leq e.
		\]
		Therefore, by \eqref{varformula} we have
		\[
		\E (W - L)_{+}/ (C \theta) e^{\lambda Z} \leq \E e^{\lambda Z} + \Ent(e^{\lambda Z}),
		\]
		which implies
		\[
		(1 - C\theta \lambda^{2}) \Ent(e^{\lambda Z}) \leq \lambda^{2} (L + C \theta) \E e^{\lambda Z}.
		\]
		By the standard Herbst argument (see e.g., Proposition~{6.1} in \cite{boucheron2013concentration}) we have for any \( 0 < \lambda \leq ({2}C \theta)^{-1/2} \),
		\[
		\log \E \exp( \lambda (Z - \E Z)  ) \leq 2(L + C \theta) \lambda^{2}.
		\]
		{This moment generating function bound is known to immediately imply the right-tail concentration bound (see the properties of subgamma random variables in \cite{boucheron2013concentration}).} Finally, if \eqref{mod_log-sob_1} holds for all \( \lambda \in \R \), the two sided inequality can be derived in the same way.
	\end{proof}
	
	\begin{remark}
		Note, there is as well a moment version of the modified logarithmic Sobolev inequality, see e.g., Theorem 2 in \cite{Boucheron05}. By this theorem it holds for all $q \ge 2$ that
		\[
		\| (Z - \E Z)_{+} \|_{L_q} \leq \sqrt{2 \kappa q} \| \sqrt{V_{+}} \|_{L_q},
		\]
		where \( \kappa < 2  \) is an absolute constant. Then if we have 
		\begin{equation}
		\label{subexpvplus}
		\| \sqrt{V_{+}} \|_{L_q} \leq \sqrt{L} + \sqrt{\theta q},
		\qquad
		\forall q \geq 2,
		\end{equation}
		which is equivalent to the second inequality in \eqref{mod_log-sob_1} up to absolute constant factors, then it holds for any \( q \geq 2 \)
		\[
		\| (Z - \E Z)_{+} \|_{L_q} \leq \sqrt{4Lq} + \sqrt{4 \theta} q.
		\]
		The last inequality implies \eqref{decoupled_bound} up to absolute constant factors. We note that similar moment computations were used in \cite{Boucheron05} to analyze the Rademacher chaos. Similarly, one can introduce the moment analog of the logarithmic Sobolev inequality (see equation 3 in \cite{Adam15a}):
		\[
		\|Z(X) - \E Z(X)\|_{L_q} \le K\sqrt{q}\||\nabla Z(X)|\|_{L_q},
		\]
		where $K > 0$ is a constant, $|\cdot |$ stands for the standard Euclidean norm and $q \ge 2$.
		Now, if it holds (which in some cases may be derived by the second application of the moment analog of the logarithmic Sobolev inequality)
		\[
		\||\nabla Z(X)|\|_{L_q} \le \E|\nabla Z(X)| + \||\nabla Z(X)| - \E|\nabla Z(X)|\|_{L_q} \leq \sqrt{L} + K\sqrt{\theta q},
		\qquad
		\forall q \geq 2
		\]
		then
		\[
		\|Z - \E Z\|_{L_q} \leq K(\sqrt{Lq} + K\sqrt{\theta}q),
		\]
		which implies the result similar to \eqref{decoupled_bound}.
	\end{remark}
	
	Finally, we establish a version of our result that requires neither that \(X_i \) is centered nor that $X_i$ has variance one. It can happen that \( \E X^{\T} A X \neq \tr(A) \), but in fact, the value we subtract does not really affect the concentration properties. In general we can consider the random variable
	\begin{equation}
	\label{z_arbitrary_var}
	Z = \sup_{A \in \AA} ( X^{\T} A X - g(A) ),
	\end{equation}
	where \( g : \R^{n \times n} \rightarrow \R \) is an arbitrary function.
	
	\begin{lemma}
		\label{bounded_with_diag:thm}
		Suppose that the components \(X_i\) are independent but not necessar{il}y centered, {and \( | X_i| \leq K \) almost surely.} Then for $Z$ defined by \eqref{z_arbitrary_var} and for any \( t\geq 1 \) it holds with probability at least $1 - e^{-t}$ that
		\[
		Z - \E Z \leq C \left(K ( \E\sup_{A\in\AA} \| A X \| + \E \sup_{A\in\AA} \| \Diag(A) X\|  ) \sqrt{t} + K^{2} \sup_{A\in\AA} \| A \| t \right)
		,
		\]
		where \( C \) is an absolute constant.
	\end{lemma}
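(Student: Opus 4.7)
The natural approach is to combine the modified logarithmic Sobolev inequality \eqref{mod_log-sob} with Lemma~\ref{grad_trunc_lem}: from
\[
\Ent(e^{\lambda Z}) \leq \lambda^{2} \E V_{+} e^{\lambda Z}, \qquad V_{+} = \E' \sum_{i = 1}^{n} (Z - Z_{i}')_{+}^{2},
\]
it suffices to exhibit $L, \theta \geq 0$ with $\P(V_+ > L + \theta t) \leq e^{-t}$ for $t \geq 1$, where $L \lesssim K^{2}[(\E \sup_{A \in \AA} \|AX\|)^{2} + (\E \sup_{A \in \AA} \|\Diag(A) X\|)^{2}]$ and $\theta \lesssim K^{4} \sup_{A \in \AA} \|A\|^{2}$.

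For the pointwise bound on $V_+$, pick a maximizer $\widetilde{A} = \widetilde{A}(X) \in \AA$ of $X^{\T} A X - g(A)$. Using $Z_i' \geq (X^{(i)})^{\T} \widetilde{A} X^{(i)} - g(\widetilde{A})$ and the symmetry of $\widetilde{A}$, a direct expansion gives
\[
Z - Z_i' \leq (X_i - X_i')\bigl[ 2(\widetilde A X)_i - \widetilde A_{ii}(X_i - X_i') \bigr].
\]
Squaring, summing over $i$, and applying $(a-b)^2 \leq 2a^2 + 2b^2$ yields
\[
\sum_i (Z - Z_i')_+^2 \leq 8 \sum_i (X_i - X_i')^2 (\widetilde A X)_i^2 + 2 \sum_i \widetilde A_{ii}^2 (X_i - X_i')^4.
\]
For the first sum I would use $\E'(X_i - X_i')^2 \leq 4K^{2}$ to obtain the term $32 K^{2} \|\widetilde A X\|^{2}$. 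For the second sum, the key inequality is $(X_i - X_i')^4 \leq 8(X_i^4 + X_i'^4) \leq 8 K^{2}(X_i^{2} + X_i'^{2})$, which combined with $\sum_i \widetilde A_{ii}^{2} \E X_i^{2} \leq \sup_{A \in \AA} \E \|\Diag(A)X\|^{2} \leq \E \sup_{A \in \AA} \|\Diag(A)X\|^{2}$ (by Jensen) produces an upper bound of the form $K^{2}\|\Diag(\widetilde A)X\|^{2} + K^{2}\E \sup_{A \in \AA} \|\Diag(A)X\|^{2}$. Bounding $\|\widetilde A X\|^2 \leq \sup_{A \in \AA} \|AX\|^2$ and $\|\Diag(\widetilde A) X\|^{2} \leq \sup_{A \in \AA} \|\Diag(A) X\|^{2}$, we arrive at
\[
V_+ \lesssim K^{2} \sup_{A \in \AA} \|AX\|^{2} + K^{2} \sup_{A \in \AA} \|\Diag(A) X\|^{2} + K^{2} \E \sup_{A \in \AA} \|\Diag(A) X\|^{2}.
\]

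The two random terms in this bound are convex and $\sup_{A \in \AA} \|A\|$-Lipschitz functions of $X$. Since $|X_i| \leq K$ almost surely, Talagrand's classical convex concentration inequality (equivalently, the bounded case of Lemma~\ref{convex_subgauss:lemmamain}) yields sub-Gaussian tails
\[
\P\bigl(\sup_{A \in \AA} \|AX\| \geq \E \sup_{A \in \AA} \|AX\| + CK \sup_{A \in \AA} \|A\| \sqrt{t}\bigr) \leq e^{-t},
\]
and an analogous tail for $\sup_{A \in \AA} \|\Diag(A) X\|$. Squaring, combining by a union bound, and using $\E \sup_{A \in \AA} \|\Diag(A)X\|^{2} \lesssim (\E \sup_{A \in \AA} \|\Diag(A)X\|)^{2} + K^{2} \sup_{A \in \AA} \|A\|^{2}$, we obtain $V_+ \leq L + \theta t$ with the desired $L$ and $\theta$ (after adjusting absolute constants to absorb the union-bound factor). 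Plugging into Lemma~\ref{grad_trunc_lem} and noting that for $t \geq 1$ the auxiliary $\sqrt{t}$-contribution of size $K^{2} \sup_{A \in \AA} \|A\| \sqrt{t}$ is dominated by $K^{2} \sup_{A \in \AA} \|A\| t$, we recover the claimed inequality.

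The delicate point is the treatment of $\sum_i \widetilde A_{ii}^2 (X_i - X_i')^4$: the crude bound $|X_i - X_i'| \leq 2K$ would give $K^{4} \|\Diag(\widetilde A)\|_{\text{HS}}^{2}$, which is not controlled by the operator norm (it carries an uncontrolled dimension factor) and would break the desired form of the inequality. The sharpening $(X_i - X_i')^4 \leq 8 K^{2}(X_i^{2} + X_i'^{2})$ is what reduces this contribution to $K^{2} \sup_{A \in \AA} \|\Diag(A) X\|^{2}$ plus a deterministic piece bounded by $\E \sup_{A \in \AA} \|\Diag(A) X\|^{2}$, thereby producing the additional $\E \sup_{A \in \AA} \|\Diag(A) X\|$ term in the final bound.
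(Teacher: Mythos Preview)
Your proof is correct and follows essentially the same route as the paper: modified log-Sobolev, a pointwise bound on $V_+$ in terms of $\sup_{A}\|AX\|^2$, $\sup_{A}\|\Diag(A)X\|^2$, and $\E\sup_{A}\|\Diag(A)X\|^2$, then convex concentration for the random terms, the Poincar\'e inequality for the deterministic one, and Lemma~\ref{grad_trunc_lem} to conclude. The only cosmetic difference is in the algebra leading to the $V_+$ bound: the paper factors out $(X_i - X_i')^2 \leq (2K)^2$ directly and applies the triangle inequality to the vector $2\widetilde A X + \Diag(\widetilde A)(X'-X)$, reaching the same three-term estimate without your intermediate split and the $(X_i - X_i')^4 \leq 8K^2(X_i^2 + X_i'^2)$ step.
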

	\begin{proof}
		Let \( \At \) be the matrix that maximizes $Z(X)$ given $X$. We have
		\begin{align*}
		\sum_{i \leq n} ( Z - Z_i')^{2}_{+} &\leq  \sum_{i \leq n} \left( X^{\T} \At X - [X^{(i)}]^{\T} \At X^{(i)} \right)^{2} \\
		& = \sum_{i \leq n} \left( 2 (X_i - X_{i}') \sum_{j \neq i} \at_{ij} X_j + \at_{ii} (X_i^{2} - X_{i}'^{2}) \right)^{2} \\
		& = 
		\sum_{i \leq n}  (X_i - X_{i}')^2\left(2 \sum_{j \neq i} \at_{ij} X_j + \at_{ii} (X_i + X_{i}') \right)^{2}\\
		& \leq 
		(2K)^{2} \sum_{i \leq n} \left( 2 \sum_{j} \at_{ij} X_j + {\at_{ii}} (X_i' - X_{i}) \right)^{2},
		\end{align*}
		where the last line follows from $|X_i - X_{i}'| \le 2K$. {The factor $2$ appears in the second line because \( \At\) is symmetric and thus \( X_{i}' \) is counted twice.} Applying the triangle inequality we get
		\[
		V_+ = \E' \sum_{i \leq n} ( Z - Z_i')^{2}_{+} \leq (2K)^{2} \E' \sup_{A \in \AA} (2 \| A X \| + \| \Diag(A) X \| + \| \Diag(A) X'\|)^{2},
		\]
		{where \( \E'[ \cdot ] = \E[ \cdot \vert\; X]\) denotes the expectation with respect to the variables \( X_1', \dots, X_n'\) only.}
		Thus,
		\[
		V_{+} \leq 12 K^{2} \left( {4}\sup_{A \in \AA} \| A X\|^{2} + \sup_{A \in \AA} \| \Diag(A) X \|^{2} + \E \sup_{A \in \AA} \| \Diag(A) X \|^{2} \right),
		\]
		{where we used $(a + b + c)^2 \le 3(a^2 + b^2 + c^2)$}.
		Since \( |X_i| \leq K \), we have by convex concentration for Lipschitz functions (see e.g. Theorem~{6.10} in \cite{boucheron2013concentration})
		\begin{equation}
		\label{sup_norm_A_X:eq}
		\P\left(  \sup_{A\in\AA} \| A X \| > \E \sup_{A\in\AA} \| A X \|  + 2 \sqrt{2} K \sup_{A\in \AA} \| A \| \sqrt{t}  \right) \leq e^{-t}.
		\end{equation}
		{Using \( (a + b)^{2} \leq 2a^2 + 2b^2 \) we have
		\begin{equation}
		\P\left(  \sup_{A \in \AA} \| A X \|^2 > 2 \left(\E  \sup_{A\in\AA} \| A X \|\right)^2  + 16 K^2 \sup_{A\in\AA} \| A \|^2 t  \right) \leq e^{-t}.
		\end{equation}
		Similar inequality holds for the term \( \sup_{A \in \AA} \| \Diag(A) X \|^2 \). 
		Moreover, by the Poincare inequality (Theorem~3.17 in \cite{boucheron2013concentration}) we have
		\begin{align*}
		    \E \sup_{A \in \AA} \| \Diag(A) X \|^{2} &= \left( \E \sup_{A \in \AA} \| \Diag(A) X \| \right)^{2} + \mathrm{Var}\left( \E \sup_{A \in \AA} \| \Diag(A) X \| \right) \\
		    & \leq \left( \E \sup_{A \in \AA} \| \Diag(A) X \| \right)^{2} + (2K)^2 \sup_{A \in \AA} \|\Diag(A) \|^{2} .
		\end{align*}
		Since \( \| \Diag(A) \| \leq \| A \| \), we get that for \( L \sim K^2 \left(\E \sup_{A \in \AA} \| A X \| + \E \sup_{A \in \AA} \| \Diag(A) X \|\right)^2 \) and \( \theta \sim K^4 \left(\sup_{A \in \AA} \| A \| \right)^2 \) we have}
		\[
		\P\left( V_{+} > L + \theta t \right) \leq e^{-t}.
		\]
		Therefore, due to the modified logarithmic Sobolev inequality \eqref{mod_log-sob} we can use Lemma~\ref{grad_trunc_lem}. This provides us with the inequality
		\[
		\P(Z - \E Z > C(\sqrt{L + \theta} \sqrt{t} + \sqrt{\theta} t)) \leq e^{-t},
		\]
		where we can neglect the \( \theta \) in front of \( \sqrt{t} \) when \( t \geq 1 \).
	\end{proof}
	
	Note that our bound has the term \( \E \sup_{A \in \AA} \| \Diag(A) X \| \) which can be avoided in the case of centered variables \( X_i \). Therefore, we obtain the bound matching the previous results \eqref{adam} and \eqref{Talagrand}.
	
	\begin{corollary}
		\label{bounded:corollary}
		Suppose that \( | X_i| \leq K \) almost surely and \( \E X_i = 0 \). Then for any \( t \geq 1 \) it holds with probability at least $1 - e^{-t}$ that
		\[
		    Z - \E Z {\leq C} \left(K \E\sup_{A\in\AA} \| A X \|  \sqrt{t} + K^{2} \sup_{A\in\AA} \| A \| t \right)
		,
		\]
		where \( C > 0 \) is an absolute constant. 
	\end{corollary}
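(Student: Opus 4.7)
The plan is to refine the computation of $V_+ = \E' \sum_i (Z - Z_i')_+^2$ from the proof of Lemma~\ref{bounded_with_diag:thm} by exploiting the centering hypothesis $\E X_i = 0$ at the precise step where the $\E \sup_{A \in \AA} \|\Diag(A) X\|$ contribution arose through the triangle inequality.

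First I would keep the pointwise estimate
\[
\sum_i (Z - Z_i')_+^2 \leq \sum_i (X_i - X_i')^2 \bigl( 2(\At X)_i + \at_{ii}(X_i' - X_i) \bigr)^2,
\]
where $\At$ maximizes $Z$, but instead of coarsely bounding $\|\Diag(\At)(X' - X)\| \leq \|\Diag(\At) X\| + \|\Diag(\At) X'\|$ as in the lemma's proof, I would apply $\E'$ directly to the square. Writing $\sigma_i^2 = \E X_i^2$, centering gives $\E'(X_i' - X_i) = -X_i$ and $\E'(X_i' - X_i)^2 = X_i^2 + \sigma_i^2$; expanding, bounding $(X_i - X_i')^2 \leq 4K^2$, and absorbing the cross term via Young's inequality then produces a schematic estimate
\[
V_+ \leq C K^2 \Bigl( \| \At X \|^2 + \| \Diag(\At) X \|^2 + \textstyle\sum_i \at_{ii}^2 \sigma_i^2 \Bigr) .
\]
The identity unlocked by centering is $\sum_i A_{ii}^2 \sigma_i^2 = \E \|\Diag(A) X\|^2 \leq \E \|AX\|^2$ for every $A$, a consequence of $\E\|AX\|^2 = \sum_{i,j} A_{ij}^2 \sigma_j^2 \geq \sum_j A_{jj}^2 \sigma_j^2$. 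Taking the supremum over $A \in \AA$ yields a deterministic upper bound on $\sum_i \at_{ii}^2 \sigma_i^2$ that does not involve $\E \sup_{A \in \AA} \|\Diag(A) X\|$.

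Next, I would control the remaining random quantities by convex concentration. Theorem~6.10 in \cite{boucheron2013concentration} applied to the convex $\sup_{A \in \AA} \|A\|$-Lipschitz functions $X \mapsto \sup_{A \in \AA} \|AX\|$ and $X \mapsto \sup_{A \in \AA} \|\Diag(A) X\|$ (using $\|\Diag(A)\| \leq \|A\|$), together with the Poincar\'e inequality, give $\sup_{A \in \AA} \E \|AX\|^2 \leq \E \sup_{A \in \AA} \|AX\|^2 \leq 2 (\E \sup_{A \in \AA} \|AX\|)^2 + C K^2 \sup_{A \in \AA} \|A\|^2$, and an analogous concentration tail for $\sup_{A \in \AA} \|\Diag(A) X\|^2$. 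Combining this with the previous step yields $V_+ \leq L + \theta t$ with probability $1 - e^{-t}$, where $L \sim K^2 (\E \sup_{A \in \AA} \|AX\|)^2$ and $\theta \sim K^4 \sup_{A \in \AA} \|A\|^2$, and no $\E \sup_{A \in \AA} \|\Diag(A) X\|$ survives. Finally, Lemma~\ref{grad_trunc_lem} applied to the modified logarithmic Sobolev inequality \eqref{mod_log-sob} with these $L, \theta$ delivers the claimed one-sided concentration.

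The main obstacle is the treatment of the pointwise random term $\| \Diag(\At) X \|^2$: the centered-expectation inequality $\E \|\Diag(A) X\|^2 \leq \E \|AX\|^2$ is clean, but the corresponding pointwise inequality $\| \Diag(A) X \|^2 \leq \| A X \|^2$ fails in general, so the random diagonal contribution cannot be absorbed into $\| \At X \|^2$ term by term. The workaround is to split this random quantity into its mean $\sum_i \at_{ii}^2 \sigma_i^2$, deterministically controlled through the centered identity above, and a fluctuation $\sum_i \at_{ii}^2 (X_i^2 - \sigma_i^2)$ controlled by convex concentration with Lipschitz constant $\sup_{A \in \AA} \|A\|$. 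Organizing these concentration arguments so that every expectation appearing is ultimately dominated by $\E \sup_{A \in \AA} \|AX\|$ and the deterministic operator norm $\sup_{A \in \AA} \|A\|$, without reintroducing an irreducible $\E \sup_{A \in \AA} \|\Diag(A) X\|$, is the subtle part of the argument.
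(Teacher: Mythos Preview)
Your approach diverges from the paper's and contains a genuine gap at exactly the point you flag as ``the subtle part of the argument.''

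The paper does not refine the $V_{+}$ computation at all. Instead it applies Lemma~\ref{bounded_with_diag:thm} as is, obtaining the bound with the extra term $\E\sup_{A\in\AA}\|\Diag(A)X\|$, and then proves the comparison
\[
\E\sup_{A\in\AA}\|\Diag(A)X\| \;\lesssim\; \E\sup_{A\in\AA}\|AX\|
\]
directly. This is Lemma~\ref{diag_compare:lem}, established through a symmetrization step (passing to $X-X'$, which equals $\diag(\epsv)(X-X')$ in law for Rademacher $\epsv$), then Lemma~\ref{diag_quad_comp} (Jensen's inequality exploiting convexity of $x\mapsto\sup_{B}x^{\T}Bx$ over positive semidefinite $B=A^{\T}A$), together with Khinchin's inequality and the convex Poincar\'e inequality to pass between first and second moments of $\sup_{B}\|B\epsv\|$. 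This comparison of expectations is the substantive content of the corollary.

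In your route the problematic step is the claim that the fluctuation $\sum_{i}\at_{ii}^{2}(X_i^{2}-\sigma_i^{2})$ can be handled by convex concentration with Lipschitz constant $\sup_{A\in\AA}\|A\|$. The function $g(X)=\sup_{A\in\AA}\sum_{i}A_{ii}^{2}(X_i^{2}-\sigma_i^{2})$ is convex, but on $[-K,K]^{n}$ its gradient has Euclidean norm $2\bigl(\sum_{i}\at_{ii}^{4}X_i^{2}\bigr)^{1/2}$, which in general is only bounded by $2K\|\Diag(\At)\|_{\mathrm{HS}}$, not by a multiple of $K\sup_{A\in\AA}\|A\|$. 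For instance with $\AA=\{I_n\}$ the Lipschitz constant is of order $K\sqrt{n}$ while $\sup_{A}\|A\|=1$. If instead you apply convex concentration to the genuinely $\sup_{A}\|A\|$-Lipschitz function $\sup_{A\in\AA}\|\Diag(A)X\|$, the centering term that appears is precisely $\E\sup_{A\in\AA}\|\Diag(A)X\|$, and you are back to needing a comparison of this with $\E\sup_{A\in\AA}\|AX\|$ --- i.e.\ back to the paper's Lemma~\ref{diag_compare:lem}. Your centered pointwise identity $\sum_{i}A_{ii}^{2}\sigma_i^{2}=\E\|\Diag(A)X\|^{2}\le\E\|AX\|^{2}$ handles the deterministic piece nicely, but it does not supply the missing uniform control of the random diagonal term.
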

	
	In the next two lemmas we show how to get rid of the diagonal term. This finishes the proof of the corollary above.
	
	\begin{lemma}
		\label{diag_quad_comp}
		Suppose that \( Y \in \R^{n} \) has the i.i.d. components with symmetric distribution and let \( \BB \) be a {finite} set of \( n \times n \) positive-semidefinite symmetric matrices. Then we have
		\[
		\E \sup_{B \in \BB} Y^{\T} \Diag(B) Y \leq  \E \sup_{B \in \BB} Y^{\T} B Y.
		\]
	\end{lemma}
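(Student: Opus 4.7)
The plan is a short symmetrization argument combined with Jensen's inequality, and the positive-semidefiniteness plays no role in the proof itself (it is presumably relevant only to how the lemma is applied later).

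First, I would introduce independent Rademacher signs \( \varepsilon_1, \dots, \varepsilon_n \), independent of \( Y \). Since the components of \( Y \) are independent with symmetric distributions, the random vector \( (\varepsilon_1 Y_1, \dots, \varepsilon_n Y_n) \) has the same distribution as \( Y \). Consequently,
\[
\E \sup_{B \in \BB} Y^{\T} B Y \;=\; \E \sup_{B \in \BB} \sum_{i,j = 1}^{n} B_{ij}\, \varepsilon_{i} \varepsilon_{j}\, Y_{i} Y_{j}.
\]

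Next, I would condition on \( Y \) and pull the expectation over \( \varepsilon \) inside the supremum using the trivial bound \( \sup_{B' \in \BB} f_{B'} \geq f_{B} \) for each fixed \( B \in \BB \), where \( f_{B}(\varepsilon, Y) = \sum_{i,j} B_{ij}\, \varepsilon_{i} \varepsilon_{j}\, Y_{i} Y_{j} \). Taking \( \E_{\varepsilon}[\,\cdot\,] = \E[\,\cdot\,|Y] \) of this inequality and then taking \( \sup_{B \in \BB} \) on the right yields
\[
\E_{\varepsilon}\sup_{B \in \BB}\sum_{i,j} B_{ij}\, \varepsilon_{i} \varepsilon_{j}\, Y_{i} Y_{j} \;\geq\; \sup_{B \in \BB}\, \E_{\varepsilon}\sum_{i,j} B_{ij}\, \varepsilon_{i} \varepsilon_{j}\, Y_{i} Y_{j}.
\]
Because \( \E \varepsilon_{i} \varepsilon_{j} = \mathbf{1}\{ i = j\} \), the conditional expectation on the right collapses to \( \sum_{i} B_{ii} Y_{i}^{2} = Y^{\T}\Diag(B) Y \).

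Finally, taking the outer expectation over \( Y \) and chaining the two displays gives
\[
\E \sup_{B \in \BB} Y^{\T} B Y \;\geq\; \E \sup_{B \in \BB} Y^{\T} \Diag(B) Y,
\]
which is the claim. There is no real obstacle here; the only subtlety is the correct use of the symmetry of the \( Y_{i} \) to introduce the Rademacher signs without changing the distribution, after which Jensen (equivalently, the inequality \( \E \sup \geq \sup \E \)) handles everything.
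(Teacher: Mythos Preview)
Your argument is correct and is essentially the paper's own proof: introduce independent Rademacher signs, use the distributional identity \(Y \overset{d}{=} \diag(\varepsilon)Y\), and then swap \(\E_\varepsilon\) and \(\sup_{B}\) via \(\E\sup \ge \sup\E\), which collapses the off-diagonal terms. The one noteworthy difference is your side remark: the paper prefaces the swap by observing that positive-semidefiniteness makes \(x\mapsto \sup_{B\in\BB} x^{\T}Bx\) convex and then appeals to ``Jensen's inequality,'' whereas you correctly point out that the step \(\E_\varepsilon \sup_B f_B(\varepsilon) \ge \sup_B \E_\varepsilon f_B(\varepsilon)\) holds for any family \(\{f_B\}\) and requires no convexity at all---so the PSD hypothesis is indeed unused in the proof itself.
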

	\begin{proof}
		{Since any \(B \in \BB \) is positive-semidefinite, \( \sup_{B \in \BB} x^{\T} B x \) is a convex function of \( x \in \R^{n} \).}
		Moreover, \( Y \overset{d}= \diag(\epsv) Y \) for an independent Rademacher vector \( \epsv \in \{ 1, -1 \}^n \). Therefore, by Jensen's inequality
		\begin{align*}
		\E \sup_{B \in \BB} Y^{\T} B Y &= \E \E_{\epsv} \sup_{B \in \BB} Y^{\T} \diag(\epsv)B \diag(\epsv) Y \\
		&\geq  \E \sup_{B \in \BB} \E_{\epsv} Y^{\T}  \diag(\epsv)B \diag(\epsv) Y \\
		&= \E \sup_{B \in \BB} Y^{\T} \Diag(B) Y,
		\end{align*}
		where \( \E_{\epsv} \) denotes the expectation with respect to $\epsv$ given $Y$.
	\end{proof}
	
	\begin{lemma}\label{diag_compare:lem}
		For \( X \) with the components that are independent and mean zero we have
		\[
		\E \sup_{A \in \AA} \| \Diag(A) X \| \leq C \E \sup_{A \in \AA} \| A X \|,
		\]
		where \( C >0 \) is an absolute constant.
	\end{lemma}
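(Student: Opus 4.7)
The plan is to first reduce to a setting where the entries of $X$ are symmetric by introducing an independent copy, and then combine a pointwise $L^2$-type domination with a Banach space version of Kahane's inequality. Let $X'$ be an independent copy of $X$ and set $Y = X - X'$, so that $Y$ has independent symmetric components. The triangle inequality immediately gives $\E\sup_{A\in\AA} \|AY\| \le 2 \E\sup_{A\in\AA} \|AX\|$, while Jensen's inequality applied to the convex map $v \mapsto \|v\|$ together with the identity $X = X - \E_{X'} X'$ yields $\E\sup_{A\in\AA} \|\Diag(A) X\| \le \E\sup_{A\in\AA} \|\Diag(A) Y\|$. So it suffices to prove $\E\sup_{A\in\AA} \|\Diag(A) Y\| \lesssim \E\sup_{A\in\AA} \|AY\|$.

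I would then carry out an $L^2$ comparison. Since $\|\Diag(A)Y\|^2 = Y^T \Diag(A)^2 Y$ with diagonal entries $A_{ii}^2$, while $(A^2)_{ii} = \sum_j A_{ij}^2 \ge A_{ii}^2$ for symmetric $A$, one has $\Diag(A)^2 \preceq \Diag(A^2)$, and therefore $\sup_{A\in\AA} \|\Diag(A) Y\|^2 \le \sup_{A\in\AA} Y^T \Diag(A^2) Y$. The proof of Lemma~\ref{diag_quad_comp} uses only the symmetrization invariance $Y \stackrel{d}{=} \diag(\epsv) Y$, and so extends verbatim to vectors with independent (not necessarily identically distributed) symmetric components. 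Applied to the family $\{A^2 : A \in \AA\}$ of positive semidefinite matrices, this extension gives $\E\sup_{A\in\AA} Y^T \Diag(A^2) Y \le \E\sup_{A\in\AA} Y^T A^2 Y = \E\sup_{A\in\AA} \|AY\|^2$, and hence $\E\sup_{A\in\AA} \|\Diag(A) Y\|^2 \le \E\sup_{A\in\AA} \|AY\|^2$.

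The last step is to pass from $L^2$ to $L^1$. Writing $AY = \sum_i Y_i (Ae_i)$ and viewing $A \mapsto AY$ as a random element of the finite-dimensional Banach space $E$ of maps $\AA \to \R^n$ with the norm $\|F\|_E = \sup_{A\in\AA} \|F(A)\|$, each summand $Y_i\,(A \mapsto Ae_i)$ is a symmetric $E$-valued random variable. The Kahane--Khintchine inequality for independent symmetric Banach space-valued summands then gives $\bigl(\E\sup_{A\in\AA} \|AY\|^2\bigr)^{1/2} \le C\, \E\sup_{A\in\AA} \|AY\|$. Combining this with the previous displays and the elementary bound $\E Z \le (\E Z^2)^{1/2}$, I conclude
\[
\E\sup_{A\in\AA} \|\Diag(A)X\| \le \E\sup_{A\in\AA} \|\Diag(A) Y\| \le \bigl(\E\sup_{A\in\AA} \|AY\|^2\bigr)^{1/2} \le C\, \E\sup_{A\in\AA} \|AY\| \le 2C\, \E\sup_{A\in\AA} \|AX\|,
\]
as required. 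The main obstacle is the $L^2 \to L^1$ step: the initial reduction to the symmetric $Y$ is essential, since for general mean-zero $X$ the quantity $\sup_{A\in\AA} \|AX\|$ need not satisfy any moment equivalence of Kahane type.
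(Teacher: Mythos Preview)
Your overall strategy---symmetrize, compare in $L^2$ via $\Diag(A)^2 \preceq \Diag(A^2)$ and Lemma~\ref{diag_quad_comp}, then pass to $L^1$---is the same as the paper's, and your steps through $\E\sup_{A\in\AA}\|\Diag(A)Y\|^2 \le \E\sup_{A\in\AA}\|AY\|^2$ are correct. The gap is in the final step: there is no Kahane--Khintchine inequality for sums $\sum_i Y_i v_i$ with \emph{general} independent symmetric real scalars $Y_i$ and fixed Banach-space vectors $v_i$. Kahane's inequality is a statement about Rademacher (or Gaussian) coefficients; for arbitrary symmetric $Y_i$ the bound $(\E\|\sum Y_i v_i\|^2)^{1/2}\le C\,\E\|\sum Y_i v_i\|$ with an absolute constant is simply false. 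Take $n=2$, $\AA=\{I_2\}$, $Y_2\equiv 0$, and $Y_1$ symmetric with $\E Y_1^2/(\E|Y_1|)^2$ arbitrarily large: then $\sup_{A}\|AY\|=|Y_1|$ and your claimed inequality becomes $(\E Y_1^2)^{1/2}\le C\,\E|Y_1|$, which cannot hold uniformly. Attempting to condition on $|Y|$ and apply Kahane to the Rademacher part gives only $\E\|\cdot\|^2 \le C^2\,\E_{|Y|}(\E_\epsv\|\cdot\|)^2$, and Jensen goes the wrong way to recover $(\E\|\cdot\|)^2$.

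The paper sidesteps this by performing the \emph{entire} comparison conditionally on $X-X'$. Writing $X-X'\stackrel{d}{=}\diag(X-X')\,\epsv$ and setting $\AA_{X,X'}=\{A\,\diag(X-X'):A\in\AA\}$, one works with Rademacher sums $\sup_{B\in\AA_{X,X'}}\|B\epsv\|$. The $L^2$ comparison (your step~4) is done conditionally, and the $L^2\to L^1$ step uses Khintchine's inequality together with the convex Poincar\'e inequality for the \emph{bounded} variables $\epsv$, yielding $\E_\epsv\sup_B\|B\epsv\|^2 \le 9\,(\E_\epsv\sup_B\|B\epsv\|)^2$ with an absolute constant. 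This produces the $L^1$ inequality $\E_\epsv\sup_B\|\Diag(B)\epsv\|\le 3\,\E_\epsv\sup_B\|B\epsv\|$ \emph{before} integrating over $X,X'$, and then one simply takes $\E_{X,X'}$ on both sides. Your argument can be repaired by inserting exactly this conditioning; without it the $L^2\to L^1$ passage does not go through.
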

	\begin{proof}
		Let \( X' \) be an independent copy of \( X \). By the standard symmetrization argument together with Jensen' inequality and the triangle inequality we have
		\begin{equation}
		\label{symm_arg}
		\E \sup_{A \in \AA} \| A X \| \leq  \E \sup_{A \in \AA} \| A (X - X') \| \leq 2 \E \sup_{A \in \AA} \| A X \|.
		\end{equation}
		Observe that \( X - X' \overset{d}= (X- X') \diag(\epsv) = \diag(X - X')\epsv\) where \( \epsv \in \{ 1, -1 \}^n \) is an independent Rademacher vector. Therefore, we have
		\[
		\E \sup_{A \in \AA} \| A(X - X') \| = \E \E_{\epsv} \sup_{A \in \AA} \| A \, \diag(X - X') \epsv \|,
		\]
		where \( \E_{\epsv} \) denotes the expectation with respect to $\epsv$. 
		Conditionally on \( (X - X') \) set \( \AA_{X, X'} = \{ A \, \diag(X- X') \, : \, A \in \AA \} \). Let \( \av_1, \dots, \av_n \) be the columns of \( A \). Notice that for any matrix \(A\) we have \( \Diag(A^{\T}A) = \diag( \| \av_1 \|^2, \dots, \| \av_n\|^{2} ) \succeq \diag(A_{11}^{2}, \dots, A_{nn}^{2}) = \Diag(A)^{2} \). Therefore, by Lemma~\ref{diag_quad_comp} we have
		\begin{equation}
		\label{square_diag_all:eq}
		\E_{\epsv} \sup_{A \in \AA_{X, X'}} \| \Diag(A) \epsv \|^{2} \leq \E_{\epsv} \sup_{A \in \AA_{X, X'}} \| A \epsv \|^{2}.
		\end{equation}
		
		We now want to get rid of the squares in \eqref{square_diag_all:eq}. Let \( \mathcal{B} \) be an arbitrary set of symmetric \(n \times n \) matrices and let us fix some \( B \in \mathcal{B} \). We have \( \E \| B \epsv \|^{2} = \| B \|_{HS}^2 \) and by Khinchin's inequality we have
		\[
		\E \| B \epsv \| \geq \frac{1}{\sqrt{2}} \| B \|_{HS},
		\]
		with the optimal constant due to \cite{szarek1976best}. Thus, we have
		\[
		\E \sup_{B \in \mathcal{B}} \| B \epsv \| \geq \sup_{B \in \mathcal{B}} \E \| B \epsv \| \geq \frac{1}{\sqrt{2}} \sup_{B \in \mathcal{B}} \| B \|.
		\]
		Furthermore, by the convex Poincare inequality (Theorem 3.17, \cite{boucheron2013concentration}) we have,
		\[
		\mathrm{Var}( \sup_{B \in \mathcal{B}} \| B \epsv \| ) = \E \sup_{B \in \mathcal{B}} \| B \epsv \|^2 - \left( \E \sup_{B \in \mathcal{B}} \| B \epsv \| \right)^{2} \leq 4 \sup_{B \in \mathcal{B}} \| B \|^{2}.
		\]
		Therefore,
		$
		\E \sup_{B \in \mathcal{B}} \| B \epsv\|^{2} \leq \left( \E \sup_{B \in \mathcal{B}} \| B \epsv \| \right)^{2} + 4 \sup_{B \in \mathcal{B}} \| B \|^{2} \leq 9 \left( \E \sup_{B \in \mathcal{B}} \| B \epsv \| \right)^{2}
		$
		and we get
		\[
		(\E \sup_{B \in \mathcal{B}} \| B \epsv \|)^{2} \leq \E \sup_{B \in \mathcal{B}} \| B \epsv \|^{2} \leq 9 (\E \sup_{B \in \mathcal{B}} \| B \epsv \|)^{2}.
		\]
		The last inequality combined with \eqref{square_diag_all:eq} implies
		\[
		\E_{\epsv} \sup_{A \in \AA_{X, X'}} \| \Diag(A) \epsv \| \leq \left(\E_{\epsv} \sup_{A \in \AA_{X, X'}} \| \Diag(A) \epsv \|^{2}\right)^{\frac{1}{2}}
		\leq 3 \E_{\epsv} \sup_{A \in \AA_{X, X'}} \| A \epsv \|.
		\]
		Now, taking the expectation with respect to \( X, X' \) and applying \eqref{symm_arg} again we finish the proof.
	\end{proof}
	
	\subsection{Truncation for unbounded variables} 
	In this section we finish the proof of Theorem~\ref{mainthm}. In order to apply the bounded version of our inequality, we want to truncate each variable \( X_i \), which can be done by the approach from \cite{Adam08} (see references therein for more details on various applications of this method), where it was used in the context of Talagrand's concentration inequality. Suppose that \( \| \max_i |X_i| \|_{\psi_2} < \infty \) and set
	\begin{equation}\label{Y_W_truncation}
	Y_{i} = X_{i} \Ind( |X_i| \leq M ),
	\qquad
	W_{i} = X_{i} - Y_{i},
	\end{equation}
	with \( M = 8 \E \max |X_i| \). We have,
	\begin{align}
	Z_{\AA}(X) & =  \sup_{A\in\AA} ( Y^{\T} A Y - \E X^\T A X + W^{\T} A X + W^{\T} A Y )\nonumber \\ 
	& \leq  \sup_{A\in\AA} ( Y^{\T} A Y - \E X^\T A X) + \sup_{A\in\AA} | W^{\T} A X | + \sup_{A\in\AA} | W^{\T} A Y | \nonumber \\ 
	& \leq 
	\sup_{A\in\AA} ( Y^{\T} A Y - \E X^\T A X) + \| W \|  \sup_{A\in\AA} \| A X \| + \| W \| \sup_{A\in\AA} \| A Y \|. \label{ZY_ZX_comp}
	\end{align}
	The variables \( Y_i \) are now bounded by the value \( M \). Therefore, the first term of the last line can be analyzed by Lemma~\ref{bounded_with_diag:thm}. 
	
	To bound the rest we need to control the deviations of \( \| W \| \). We have
	$\| W \|^{2} = W_1^{2} + \dots + W_n^{2}$ is a sum of independent random variables with bounded \( \psi_1 \)-norm. Thus, we can control it's expectation via the Hoffman-J\o{}rgensen inequality. Due to the choice of the truncation level we have by Markov's inequality
	\[
	\P\left( \max_{i\leq n} W_i^{2} > 0  \right) = %
	\P\left( \max_{i\leq n} |X_i| > M \right) \leq \frac{\E \max\limits_{i\leq n} |X_i|}{M} \leq \frac{1}{8}.
	\]
	Denoting $S_{k} = W_{1}^{2} + \dots + W_{k}^{2}$ we have \( \| W \|^{2} = S_{n} \). Then,
	\[
	P \left( \max_{k \leq n} | S_{k} | > 0 \right) \leq P \left( \max_{i \leq n} W_i^2 > 0 \right) \leq \frac{1}{8}.
	\]
	Therefore, by Proposition~{6.8} in \cite{ledoux2013probability} we have
	\[
	\E \| W \|^{2} = \E S_n \leq 8 \E \max_{i \leq n} W_{i}^{2} \lesssim \| \max_{i \le n} |X_i| \|_{\psi_{2}}^{2},
	\]
	where the latter holds since \( \|\max\limits_{i \leq n} W_{i}^{2} \|_{\psi_{1}} \leq \| \max\limits_{i\leq n} |X_i| \|_{\psi_{2}}^{2} \).
	Furthermore, by Theorem~{6.21} in \cite{ledoux2013probability} we have
	\begin{align*}
	\left\| \sum_{i= 1}^n W_{i}^{2} - \E W_{i}^{2} \right\|_{\psi_1} & \leq  K_{1} \left( \E \bigl| \| W \|^{2} - \E \| W\|^{2} \bigr| + \bigl\| \max_{i \leq n} |W_{i}^2 - \E W_i^{2}| \bigr\|_{\psi_{1}}  \right) \\
	& \leq 
	2 K_{1} \left( \E \| W \|^{2}  + \bigl\| \max_{i\leq n} W_{i}^2 \bigr\|_{\psi_{1}}  \right) \\
	& \lesssim  \| \max_{i\leq n} |X_{i}| \|_{\psi_{2}}^{2},
	\end{align*}
	where $K_1$ is an absolute constant. Given the bound on the expectation of \(\| W \|^{2}  \) we have
	\[
	\bigl\|  \| W \| \bigr\|_{\psi_2} \lesssim \| \max_{i\leq n} |X_i| \|_{\psi_2}.
	\]
	Finally, we obtain the deviation bound: for every \( t > 0  \) we have
	\begin{equation}
	\label{smart_bound_for_Z}
	\P\left( \| W \| \geq C \sqrt{t} \| \max_{i\leq n} |X_{i}| \|_{\psi_{2}}   \right) \leq 2 e^{-t}.
	\end{equation}
	
	Now we apply Lemma~\ref{bounded_with_diag:thm} to the bounded variables \( Y\). Notice that our theorem does not require the variables to be centered. This assumption is only used in Corollary~{\ref{bounded:corollary}}. Taking this into account, Lemma \ref{bounded_with_diag:thm} can be applied to the variables \( Y \) as follows. Set \( g(A) = \E X^\T A X \) and \( Z_{\AA}(Y) = \sup_{A\in\AA} ( Y^{\T} A Y - g(A) ) \). By Lemma~\ref{bounded_with_diag:thm} we have
	\begin{equation}
	\label{ZY_conc}
	Z_{\AA}(Y) - \E Z_{\AA}(Y) \lesssim M \sqrt{t} \left(\E \sup_{A\in\AA} \| A Y \| + \E \sup_{A\in\AA} \| \Diag(A) Y \| \right) + M^{2} t \sup_{A\in\AA} \| A \|
	\end{equation}
	with probability at least \( 1 - e^{-t} \). Finally, we have to replace the expectations \( \E Z_{\AA}(Y) \), \( \E \sup\limits_{A\in\AA} \| A Y \| \) and \( \E \sup\limits_{A\in\AA} \| \Diag(A) Y \| \) in \eqref{ZY_conc} by their counterparts, taken with respect to \( X \), as in the original formulation of the result. 
	
	First, we want to provide a concentration bound for the convex function \( \sup\limits_{A\in\AA} \| A X \| \) that accounts for unbounded variables. As a matter of fact, we prove the following Lemma which is even slightly stronger than Lemma \ref{convex_subgauss:lemmamain}.
	
	\begin{lemma}
		\label{convex_subgauss:lem}
		Let \( f: \mathbb{R}^n \to \mathbb{R} \) be separately convex\footnote{This means that  for every $i=1,...,n$ it is a convex function of $i$-th variable if the rest of the variables are fixed. Any convex function is separately convex.} \( L \)-Lipschitz with respect to the Euclidean norm in \( \R^{n} \) and $X = (X_1, \ldots, X_n)$ be a random vector with the independent components. Then it holds for \( t \geq 1 \) that
		\[
		\P\left( f(X) > \E f(X) + C \bigl\| \max_{i\leq n} |X_i| \bigr\|_{\psi_2} L \sqrt{t}  \right) \leq e^{-t},
		\]
		where \( C > 0 \) is an absolute constant.
		Additionally, if $f$ is convex and \( L \)-Lipschitz, then for any \( t > 0 \),
		\[
		\P\left(  |f(X) - \E f(X)| > C \bigl\| \max_{i \leq n} |X_i| \bigr\|_{\psi_2} L \sqrt{t} \right) \leq 4 e^{-t}.
		\]
	\end{lemma}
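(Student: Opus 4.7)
The plan is to reuse the truncation device from Section~4.1, split $f(X)$ into the contribution of a bounded vector and an unbounded remainder, and then apply the known bounded convex concentration inequality to the former while controlling the latter via the $\psi_2$-estimate on $\|W\|$ that was already established in \eqref{smart_bound_for_Z}. Set $M = 8\,\E\max_{i\le n}|X_i|$, and define $Y_i = X_i\Ind(|X_i|\le M)$ and $W_i = X_i - Y_i$, exactly as in \eqref{Y_W_truncation}. By independence, the components of $Y$ are independent and almost surely bounded by $M$, which is the regime where Talagrand's convex concentration inequality applies.

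For the first (one-sided) statement, I would apply Theorem~6.10 of \cite{boucheron2013concentration} to $f(Y)$: since $f$ is separately convex and $L$-Lipschitz with respect to the Euclidean norm, and $|Y_i|\le M$, it gives
\[
\P\bigl( f(Y) - \E f(Y) > C L M \sqrt{t}\bigr) \le e^{-t}.
\]
Using the Lipschitz property, $|f(X) - f(Y)| \le L\|X-Y\| = L\|W\|$, and therefore
\[
f(X) - \E f(X) \le \bigl(f(Y) - \E f(Y)\bigr) + L\|W\| + L\,\E\|W\|.
\]
The term $L\,\E\|W\|$ is a deterministic quantity bounded by $L \bigl\|\max_i|X_i|\bigr\|_{\psi_2}$ (up to absolute constants), from the same computations that preceded \eqref{smart_bound_for_Z}. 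The random term $L\|W\|$ is controlled directly by \eqref{smart_bound_for_Z}. Since $M \le 8\,\E\max_i|X_i| \lesssim \bigl\|\max_i|X_i|\bigr\|_{\psi_2}$, all three contributions have the same form $L\bigl\|\max_i|X_i|\bigr\|_{\psi_2}\sqrt{t}$ (assuming $t\ge 1$), and a union bound with an absorption of constants yields the claim.

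For the second statement, the only additional ingredient is full convexity of $f$, which allows the two-sided version of Talagrand's convex concentration, giving
\[
\P\bigl( |f(Y) - \E f(Y)| > C L M \sqrt{t}\bigr) \le 2 e^{-t}.
\]
Combining this with the same Lipschitz inequality $|f(X) - f(Y)| \le L\|W\|$ and \eqref{smart_bound_for_Z} by a union bound yields the two-sided estimate with the factor $4e^{-t}$ coming from adding the two-sided Talagrand bound ($2e^{-t}$) and the two-sided control of $\|W\|$ ($2e^{-t}$).

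The main obstacle to be careful about is not the concentration step itself but the reduction to $\E f(X)$ rather than $\E f(Y)$: one must verify that $|\E f(X) - \E f(Y)| \le L\,\E\|W\|$ is absorbed into the target bound, which relies crucially on the fact that $\E\|W\|$ is of order $\bigl\|\max_i|X_i|\bigr\|_{\psi_2}$ and not larger. This was exactly the content of the Hoffman-J\o{}rgensen argument leading to \eqref{smart_bound_for_Z}, so once this estimate is invoked the remaining combinations are routine.
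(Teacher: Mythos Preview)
Your proposal is correct and follows essentially the same route as the paper: truncate at level $M=8\,\E\max_i|X_i|$, apply Theorem~6.10 of \cite{boucheron2013concentration} to the bounded vector $Y$, control $|f(X)-f(Y)|\le L\|W\|$ via \eqref{smart_bound_for_Z}, and absorb $|\E f(X)-\E f(Y)|\le L\,\E\|W\|$ using the Hoffman--J\o{}rgensen estimate. The only cosmetic difference is in the lower tail: the paper explicitly cites Theorem~7.12 of \cite{boucheron2013concentration} (the convex distance inequality, giving concentration around the median) and then passes from median to mean, which is exactly the content behind your ``two-sided version of Talagrand's convex concentration''.
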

	\begin{proof}
		By convex concentration (Theorem 6.10 in \cite{boucheron2013concentration}) for bounded \( Y_i \) defined by \eqref{Y_W_truncation} we have that for any \(  t > 0 \),
		\[
		\P\left(f(Y) > \E f(Y) + C \| \max_{i\leq n} |X_i| \|_{\psi_{2}} L \sqrt{t} \right) \leq e^{-t}.
		\]
		Moreover, due to the Lipschitz assumption and \eqref{smart_bound_for_Z} we have
		\[
		|f(X) - f(Y)| \leq L \| W \| \lesssim L \| \max_{i\leq n} |X_i| \|_{\psi_2} \sqrt{1 + t},
		\]
		where the latter holds with probability at least \( 1 - e^{-t} \).
		Integrating these two bounds we also get
		\begin{equation}\label{EfX_EfY}
		| \E f(X) - \E f(Y) | \lesssim L \| \max_{i\leq n} |X_i| \|_{\psi_2},
		\end{equation}
		which together implies that with probability at least \( 1 - e^{-t} \) we have
		\begin{align*}
		f(X) - \E f(X) & \leq  f(Y) - \E f(Y) + |f(X) - f(Y)| + |\E f(X) - \E f(Y)| \\
		& \lesssim 
		L \| \max_{i\leq n} |X_i| \|_{\psi_2} \sqrt{t}.
		\end{align*}
		The proof of the lower tail bound follows from Theorem 7.12 in \cite{boucheron2013concentration} and the standard relation between the median and the expectation which holds in our case.
	\end{proof}
	From Lemma \ref{convex_subgauss:lem} due to the fact that \( \sup\limits_{A\in\AA} \| A X \| \) is \( \sup\limits_{A\in\AA} \| A \| \)-Lipschitz we have
	\begin{equation}\label{norm_AX_conc_subgauss}
	\P\left(  \sup_{A \in \AA} \| A X \| > \E \sup_{A\in\AA} \| A X \| + C \| \max_{i\leq n} |X_i| \|_{\psi_2} \sup_{A\in\AA} \| A \| \sqrt{t} \right) \leq 2 e^{-t}.
	\end{equation}
	Moreover, similar to \eqref{EfX_EfY} we have
	\begin{equation}\label{EAYEAX_comp}
	\left|\E \sup_{A\in\AA} \| A Y \| - \E \sup_{A\in\AA} \| A X \| \right| \lesssim \| \max_{i\leq n} |X_{i}| \|_{\psi_{2}} \sup_{A\in\AA} \| A \|.
	\end{equation}
	Next, we bound the difference between \( \E Z_{\AA}(X) \) and \( \E Z_{\AA}(Y) \).
	
	\begin{lemma}\label{EZX_EZY_comp}
		We have
		\[
		|\E Z_{\AA}(Y) - \E Z_{\AA}(X)| \lesssim \| \max_{i\leq n} |X_{i}| \|_{\psi_{2}} \E \sup_{A\in\AA} \|A X \| + \bigl\| \max_{i\leq n} |X_{i}| \bigr\|_{\psi_{2}}^{2} \sup_{A\in\AA} \| A \| .
		\]
	\end{lemma}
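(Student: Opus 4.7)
The plan is to lift the pointwise decomposition already used in \eqref{ZY_ZX_comp} to an expectation-level bound via Cauchy--Schwarz, using the moment estimate $\E\|W\|^{2}\lesssim \|\max_{i\le n}|X_i|\|_{\psi_2}^{2}$ established just above and the convex concentration bound \eqref{norm_AX_conc_subgauss} for $\sup_{A\in\AA}\|AX\|$.

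First, for any symmetric $A$ the identity $X^{\T}AX - Y^{\T}AY = W^{\T}AX + W^{\T}AY$ (which uses $Y^{\T}AW=W^{\T}AY$) gives, after taking suprema in both directions,
\[
\bigl|Z_{\AA}(X) - Z_{\AA}(Y)\bigr| \le \|W\|\sup_{A\in\AA}\|AX\| + \|W\|\sup_{A\in\AA}\|AY\|.
\]
Taking expectations and applying Cauchy--Schwarz, the task reduces to controlling $(\E\|W\|^{2})^{1/2}$ and $(\E\sup_{A\in\AA}\|AX\|^{2})^{1/2}$, together with the analogous $Y$-quantity. The $Y$-quantity I would immediately reduce to the $X$-quantity through $\|AY\|\le\|AX\|+\|A\|\|W\|$; this costs at worst an additional $\E\|W\|^{2}\sup_{A\in\AA}\|A\|\lesssim \|\max_{i\le n}|X_i|\|_{\psi_2}^{2}\sup_{A\in\AA}\|A\|$ term, which is exactly of the shape appearing on the right-hand side of the lemma.

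The first factor $\E\|W\|^{2}\lesssim \|\max_{i\le n}|X_i|\|_{\psi_2}^{2}$ is already in hand from the Hoffman--J\o{}rgensen computation above. For the second, squaring \eqref{norm_AX_conc_subgauss} and integrating the tail (or equivalently using $(a+b)^{2}\le 2a^{2}+2b^{2}$) yields
\[
\E\sup_{A\in\AA}\|AX\|^{2} \lesssim \Bigl(\E\sup_{A\in\AA}\|AX\|\Bigr)^{2} + \|\max_{i\le n}|X_i|\|_{\psi_2}^{2}\sup_{A\in\AA}\|A\|^{2},
\]
so the square root is at most $\E\sup_{A\in\AA}\|AX\| + \|\max_{i\le n}|X_i|\|_{\psi_2}\sup_{A\in\AA}\|A\|$. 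Multiplying this by the $(\E\|W\|^{2})^{1/2}$ factor and collecting terms produces exactly the right-hand side claimed in the lemma.

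The only mildly delicate point I expect is this second-moment bookkeeping: one must pass from $(\E\sup_{A\in\AA}\|AX\|^{2})^{1/2}$ back to $\E\sup_{A\in\AA}\|AX\|$ via the convex concentration inequality without inflating the leading dependence, so that it remains linear in $\E\sup_{A\in\AA}\|AX\|$ and the squared piece is absorbed into the $\|\max_{i\le n}|X_i|\|_{\psi_2}^{2}\sup_{A\in\AA}\|A\|$ term. Everything else is a routine application of the triangle inequality and Cauchy--Schwarz.
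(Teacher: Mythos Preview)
Your proposal is correct and follows essentially the same route as the paper: the pointwise bound from \eqref{ZY_ZX_comp}, Cauchy--Schwarz, the Hoffman--J\o{}rgensen estimate for $\E\|W\|^{2}$, and the second-moment control of $\sup_{A\in\AA}\|AX\|$ via \eqref{norm_AX_conc_subgauss}. The only cosmetic difference is in handling the $Y$-term: you reduce $\sup_{A\in\AA}\|AY\|$ to $\sup_{A\in\AA}\|AX\|+\sup_{A\in\AA}\|A\|\,\|W\|$ pointwise before taking expectations, whereas the paper first applies convex concentration to $\sup_{A\in\AA}\|AY\|$ and then invokes \eqref{EAYEAX_comp} to compare $\E\sup_{A\in\AA}\|AY\|$ with $\E\sup_{A\in\AA}\|AX\|$; both yield the same bound.
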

	\begin{proof}
		Similarly to \eqref{ZY_ZX_comp} we have
		\begin{align}
		|\E Z_{\AA}(Y) - \E Z_{\AA}(X)| & \leq  \E \| W \|  \sup_{A\in\AA} \| A X \| + \E \| W \| \sup_{A\in\AA} \| A Y \|  \nonumber \\
		& \leq 
		\E^{1/2} \| W \|^{2} ( \E^{1/2} \sup_{A\in\AA} \| A X \|^{2} + \E^{1/2}  \sup_{A\in\AA} \| A Y \|^{2}),\label{ZY_ZX_diff}
		\end{align}
		where by \eqref{smart_bound_for_Z} \(\E^{1/2} \| W \|^{2} \lesssim \| \max_{i\leq n} |X_{i}| \|_{\psi_{2}}\) and by \eqref{norm_AX_conc_subgauss}, 
		\[
		\E \sup_{A\in\AA} \| A X \|^{2}  \lesssim  \left(\E \sup_{A\in\AA} \| A X \|\right)^{2} + \| \max_{i\leq n} |X_{i}| \|_{\psi_{2}}^{2} \sup_{A\in\AA} \| A \|^{2}.
		\]
		Taking the square root we get
		\[
		\E^{1/2} \sup_{A\in\AA} \| A X \|^{2}  \lesssim  \E \sup_{A\in\AA} \| A X \| + \| \max_{i\leq n} |X_{i}| \|_{\psi_{2}} \sup_{A\in\AA} \| A \|.
		\]
		Similarly and using {\eqref{EAYEAX_comp}} we have,
		\begin{align*}
		\E^{1/2} \sup_{A\in\AA} \| A Y \|^{2} & \lesssim  \E \sup_{A\in\AA} \| A Y \| + \| \max_{i\leq n} |X_{i}| \|_{\psi_{2}} \sup_{A \in \AA} \| A \| \\
		& \lesssim 
		\E \sup_{A\in\AA} \| A X \| + \| \max_{i\leq n} |X_{i}| \|_{\psi_{2}} \sup_{A\in\AA} \| A \|.
		\end{align*}
		Plugging it in \eqref{ZY_ZX_diff} we get the required inequality.
	\end{proof}
	
	Therefore, in \eqref{ZY_conc} we can use Lemma \ref{EZX_EZY_comp} to get
	\begin{equation}
	\label{EZY_EZX_via_AX}
	\E Z_{\AA}(Y)  \leq   \E Z_{\AA}(X) + C \left(\| \max_{i\leq n} |X_{i}| \|_{\psi_{2}} \E \sup_{A \in \AA} \|A X \| + \| \max_{i\leq n} |X_{i}| \|_{\psi_{2}}^{2} \sup_{A \in \AA} \| A \|\right),
	\end{equation}
	and by Lemma~\ref{EAYEAX_comp} (neglecting the diagonal term for centered \( X \) due to Lemma~\ref{diag_compare:lem})
	\begin{equation}\label{EAY_EAX}
	\E \sup_{A\in \AA} \| A Y \| + \E \sup_{A\in\AA} \| \Diag(A) Y \|  \leq 
	C\left(\E \sup_{A\in\AA} \| A X \| + \| \max_{i\leq n} |X_{i}| \|_{\psi_{2}} \sup_{A\in\AA} \| A \| \right).
	\end{equation}
	Finally, with probability at least $1 - e^{-t}$ for \( t \geq 1 \) we have from \eqref{ZY_ZX_comp}, {\eqref{EAYEAX_comp}} and \eqref{norm_AX_conc_subgauss}
	\begin{align*}
	|Z_{\AA}(X) - Z_{\AA}(Y)| & \leq  \| W \| \sup_{A\in\AA} \| A Y \| + \| W \| \sup_{A\in\AA} \| A X \| \\
	& \lesssim 
	\| W \| \E \sup_{A\in\AA} \| A X \| + \| W \| \| \max_{i\leq n} |X_i| \|_{\psi_2} \sup_{A\in\AA} \| A \| \sqrt{t},
	\end{align*}
	which using \eqref{smart_bound_for_Z} turns into
	\[
	|Z_{\AA}(X) - Z_{\AA}(Y)| \lesssim \| \max_{i\leq n} |X_i| \|_{\psi_2} \E \sup_{A\in\AA} \| A X \| \sqrt{t} + \| \max_{i\leq n} |X_i| \|_{\psi_2}^{2} \sup_{A\in\AA} \| A \| t.
	\]
	Combining the last inequality together with \eqref{EZY_EZX_via_AX} and \eqref{EAY_EAX} we finish the proof of Theorem~\ref{mainthm}.
	
	\subsection{Proof of Proposition \ref{mainthm2}}
	
	The proof is essentially based on the application of the next standard deviation bound instead of the concentration bound of \eqref{norm_AX_conc_subgauss} in the proof of Theorem \ref{mainthm}. Since we did not find an exact reference, we derive this inequality here.
	
	\begin{lemma}\label{norm_AX_by_Gauss}
		Suppose that \( X_1, \dots, X_n \) are independent centered random variables and \( \AA \) is a finite set of symmetric matrices. Let \( \gv  \) be a standard normal vector in \( \R^n\). 
		Then it holds with probability at least $1 - Ce^{-t}$ that
		\[
		\sup_{A \in \mathcal{A}}\|AX\| \lesssim \max_{i\leq n}\bigl\| X_i \bigr\|_{\psi_2}\left(\E \sup_{A \in \mathcal{A}} \| A \gv \| + \sup_{A\in\AA}\| A \| \sqrt{t}\right),
		\]
		where $C > 0$ is an absolute constant.
	\end{lemma}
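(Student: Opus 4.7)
The plan is to recognize $\sup_{A \in \mathcal{A}} \|AX\|$ as the supremum of a subgaussian process and then apply chaining together with a Gaussian comparison. Write
$$
\sup_{A \in \mathcal{A}} \|AX\| = \sup_{v \in T} \langle v, X \rangle, \qquad T = \{A^{\T} u : A \in \mathcal{A},\ u \in S^{n-1}\},
$$
so that $\mathrm{diam}(T) \le 2\sup_{A \in \mathcal{A}} \|A\|$. Since the components $X_i$ are independent, centered, and $\|X_i\|_{\psi_2} \le K$ with $K = \max_i \|X_i\|_{\psi_2}$, Hoeffding's inequality for subgaussians gives
$$
\bigl\| \langle v - w, X \rangle \bigr\|_{\psi_2} \lesssim K \|v - w\|,
$$
so $v \mapsto \langle v, X\rangle$ is a subgaussian process on $T$ with respect to $CK$ times the Euclidean metric.

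The next step is to invoke Dirksen's deviation bound for generic chaining (see, e.g., Theorem~3.2 in Dirksen, \emph{Tail bounds via generic chaining}, or Theorem~2.2.27 in Talagrand's book), which yields, for any fixed $v_0 \in T$ and every $t \ge 1$,
$$
\P\left( \sup_{v \in T} \langle v - v_0, X\rangle \ge C K \bigl( \gamma_2(T, \|\cdot\|) + \mathrm{diam}(T) \sqrt{t} \bigr) \right) \le e^{-t}.
$$
To deal with the reference point, use that $\langle v_0, X\rangle$ is itself centered and $CK\|v_0\|$-subgaussian, so by Hoeffding $|\langle v_0, X\rangle| \lesssim K \sup_{A \in \mathcal{A}}\|A\|\sqrt{t}$ with probability $1-2e^{-t}$. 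Combining these, $\sup_{A\in\mathcal{A}}\|AX\| \lesssim K(\gamma_2(T, \|\cdot\|) + \sup_{A\in\mathcal{A}}\|A\|\sqrt{t})$ with the required probability.

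Finally, the $\gamma_2$-functional must be replaced by the Gaussian expectation. This is the lower bound in Talagrand's majorizing measure theorem applied to the canonical Gaussian process $v \mapsto \langle v, g\rangle$ on $T$: since the subgaussian metric of this Gaussian process is exactly the Euclidean metric, one has
$$
\gamma_2(T, \|\cdot\|) \lesssim \E \sup_{v \in T} \langle v, g \rangle = \E \sup_{A \in \mathcal{A}} \|A g\|.
$$
Plugging this in yields the claim. The main technical obstacle is the correct invocation of the generic chaining tail bound together with the Talagrand majorizing measure comparison; once these black boxes are in place, the subgaussian estimate of the increments and the identification of $T$ are routine, and the diameter of $T$ matches the desired $\sup_{A \in \mathcal{A}}\|A\|$ term.
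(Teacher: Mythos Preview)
Your proof is correct and follows essentially the same route as the paper: both recognize $\sup_{A\in\mathcal{A}}\|AX\|$ as the supremum of a subgaussian linear process indexed by $\mathcal{A}S^{n-1}$, invoke a generic chaining tail bound (the paper cites Theorem~2.2.26 in Talagrand, you cite Dirksen/Theorem~2.2.27), and then replace $\gamma_2$ by $\E\sup_{A\in\mathcal{A}}\|AG\|$ via the majorizing measure theorem. The only cosmetic difference is that you split off and bound $\langle v_0,X\rangle$ separately, whereas the paper's cited theorem absorbs this step.
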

	\begin{proof}
		At first, we observe that $
		\sup_{A \in \mathcal{A}}\|AX\| = \sup\limits_{A \in \mathcal{A}, \gamma \in S^{n - 1}}\gamma^TAX$. Consider the metric $\rho$ defined by $\rho(a, b) = \|a - b\|\max\limits_{i\leq n}\|X_i\|_{\psi_2}$ for any $a, b \in \mathbb{R}^n$. By Theorem 2.2.26 in \cite{Talagrand2014} it holds for $t \ge 0$ and an absolute constant $C > 0$ that with probability at least $1-C\exp(-t)$,
		\[
		\sup\limits_{A \in \mathcal{A}, \gamma \in S^{n - 1}}\gamma^TAX  \lesssim \text{diam}(\mathcal{A}S^{n - 1}, \rho)\sqrt{t} + \gamma_{2}(\mathcal{A}S^{n - 1}, \rho),
		\]
		where $\text{diam}(\mathcal{A}S^{n - 1}) = \sup\limits_{x, y \in \mathcal{A}S^{n - 1}}\|x - y\|\max\limits_{i\leq n}\|X_i\|_{\psi_2} \le 2\sup\limits_{A \in \mathcal{A}}\|A\|\max\limits_{i\leq n}\bigl\| X_i \bigr\|_{\psi_2}$ and the functional $ \gamma_{2}$ is also defined in \cite{Talagrand2014}. For the sake of brevity, we will not introduce its definition here. Finally, applying Talagrand's majorizing measure theorem (Theorem 2.4.1 in \cite{Talagrand2014}) we have
		\[
		\gamma_{2}(\mathcal{A}S^{n - 1}, \rho) \lesssim \max_{i\leq n}\bigl\| X_i \bigr\|_{\psi_2}\E\sup\limits_{x \in \mathcal{A}S^{n - 1}}x^TG = \max_{i\leq n}\bigl\| X_i \bigr\|_{\psi_2}\E\sup\limits_{A \in \mathcal{A}}\|AG\|.
		\]
		The claim follows.
	\end{proof}
	
	Setting $M = 8 \E \max_{i} |X_i| $ and $K = \max_{i}\| X_i \|_{\psi_2}$ consider the truncation scheme that is used in \eqref{Y_W_truncation}. Due to the assumption that all \( X_{i} \) are symmetrically distributed, we have \( \E Y_{i} = 0 \). Therefore, Lemma \ref{norm_AX_by_Gauss} implies
	\[
	\P\left(\sup_{A \in \mathcal{A}}\|A Y\| > C K(\E \sup_{A \in \mathcal{A}} \| A \gv \| + \sup_{A\in\AA}\| A \| \sqrt{t})  \right) \leq e^{-t},
	\]
	which can be used instead of the convex concentration inequality \eqref{sup_norm_A_X:eq} when dealing with the modified logarithmic Sobolev inequality. Following this proof and using the fact that \( \max_i|Y_i| \leq M \) almost surely, we end up with the concentration bound
	\[
	Z_{\AA}(Y) - \E Z_{\AA}(Y) \lesssim M K \left(\E \sup_{A \in \mathcal{A}} \| A \gv \| \sqrt{t} + \sup_{A\in\AA}\| A \| t \right),
	\]
	which holds with probability at least \( 1- e^{-t} \) for any \( t > 1 \). Furthermore, we slightly modify the derivations of the previous section by using Lemma~\ref{norm_AX_by_Gauss} instead of \eqref{norm_AX_conc_subgauss}. In particular, we get with probability at least \( 1- e^{-t} \) for any \( t > 1 \),  
	\[
	|Z_{\AA}(X) - Z_{\AA}(Y)|  \lesssim  M K (\E \sup\limits_{A\in\AA} \| A G \| \sqrt{t} +  \sup\limits_{A\in\AA} \| A \| t ),
	\]
	and taking expectation we also get
	\(
	|\E Z_{\AA}(X) - \E_{\AA} Z(Y)| \lesssim  M K \E \sup\limits_{A\in\AA} \| A G \|
	\).
	The claim follows from \eqref{ZY_ZX_comp}.

	\section{The matrix Bernstein inequality in the subexponential case}
	\label{matrixbern} 
	As we mentioned above, one of the prominent applications of the uniform Hanson-Wright inequalities is {a} recent concentration result in the Gaussian covariance estimation problem. It is known that covariance estimation may be alternatively approached by the matrix Bernstein inequality, {see e.g. \cite{wei2017estimation, Lounici14}}. Following the truncation approach, which was taken above, we provide a version of matrix Bernstein inequality that does not require uniformly bounded matrices. The standard version of the inequality (see \cite{Tropp12} and reference therein) 
	may be formulated as follows: consider random independent matrices \( X_{1}, \dots, X_{N} \in \R^{n \times n}\), such that almost surely $\max_i\|X_i\| \le L$. It holds
	\[
	\P\left( \left\| \sum_{i = 1}^{N} X_{i} - \E X_{i} \right\| > u \right) \leq n\exp\left( -c  \left(\frac{u^{2}}{\sigma^2} \bigwedge \frac{u}{L} \right)\right),
	\]
	where $c$ is an absolute constant and $\sigma^2 = \bigl\|\E\sum_{i = 1}^N (X_{i} - \E X_i)^2 \bigr\|$. The first problem with this result is that it does not hold in general cases when $\max_i\|X_i\|_{\psi_1}$ or $\max_i\|X_i\|_{\psi_2}$ are bounded. The second problem is the bound depends on the dimension $n$. This does not allow to apply this result to operators in infinite-dimensional Hilbert spaces. 
	
	For a positive-definite real square matrix $A$ we define the \emph{effective rank} as $\tilde{\mathbf{r}}(A) = \frac{\tr(A)}{\|A\|}$. We show the following bound. 
	\begin{proposition}
		\label{bernbound}
		Consider the set of random independent {Hermitian} matrices \( X_{1}, \dots, X_{N} \in {\C^{n \times n}} \) such that 
		$
		\bigl\| \| X_{i} \| \bigr\|_{\psi_{1}} < \infty.
		$
		Set $M = \bigl\| \max_{i \leq N} \| X_i \| \bigr\|_{\psi_1}$ and let {the positive definite Hermitian} matrix $ R $ be such that $\E\sum_{i = 1}^N X_{i}^2 \preceq R $. Finally, set $\sigma^2 = \|R\|$. There are absolute constants $c, C, c_1 > 0$ such that for any 
		$u \ge c_1\max\{M,\sigma\}$ we have
		\[
		\P\left( \left\| \sum_{i = 1}^{N} X_{i} - \E X_{i} \right\| > u \right) \leq C\tilde{\mathbf{r}}\left(R\right) \exp\left( -c  \left(\frac{u^{2}}{\sigma^2} \bigwedge \frac{u}{M} \right)\right).
		\]
	\end{proposition}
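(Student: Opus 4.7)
The plan is to follow the truncation scheme used in Theorem~\ref{mainthm} combined with an intrinsic-dimension matrix Bernstein inequality (in the spirit of Minsker's theorem) for bounded Hermitian matrices, which replaces the ambient dimension factor in Tropp's matrix Bernstein with the effective rank $\tilde{\mathbf{r}}(R)$.

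First I would fix a truncation level $\tau$ (essentially $\tau \asymp M$, but possibly depending on $u$ in the subgaussian regime), set $Y_i = X_i \Ind(\|X_i\| \leq \tau)$ and $W_i = X_i - Y_i$, and decompose
\[
\sum_{i=1}^{N}(X_i - \E X_i) \;=\; \sum_{i=1}^{N}(Y_i - \E Y_i) \;+\; \sum_{i=1}^{N}(W_i - \E W_i).
\]
Since $\|Y_i\|\leq \tau$ almost surely and $\E Y_i^{2}\preceq \E X_i^{2}$ (so $\E\sum Y_i^{2}\preceq R$), the intrinsic-dimension matrix Bernstein inequality applies to the first sum, giving
\[
\P\Bigl(\Bigl\|\sum_{i=1}^{N}(Y_i - \E Y_i)\Bigr\|>u/2\Bigr)\;\leq\; C\tilde{\mathbf{r}}(R)\exp\Bigl(-c\min\bigl(u^{2}/\sigma^{2},\,u/\tau\bigr)\Bigr).
\]

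Next I would handle the remainder $\sum(W_i - \E W_i)$ by splitting it into $\sum W_i$ and $\sum \E W_i$. On the event $\{\max_i\|X_i\|\leq\tau\}$ every $W_i$ vanishes, and the complementary event has probability $\P(\max_i\|X_i\|>\tau)\leq 2e^{-\tau/M}$ by the definition of $M=\|\max_i\|X_i\|\|_{\psi_{1}}$. The centering term $\|\sum \E W_i\|$ is controlled via the inequality $\E\|X_i\|\Ind(\|X_i\|>\tau)\leq \E\|X_i\|^{2}/\tau$ together with the Frobenius--to--operator domination $\E\|X_i\|^{2}\leq \E\|X_i\|_{\mathrm{HS}}^{2}$, yielding $\|\sum \E W_i\|\leq \tr(R)/\tau = \tilde{\mathbf{r}}(R)\sigma^{2}/\tau$. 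If needed, a scalar subexponential Bernstein inequality applied to $\sum\|W_i\|$ will give the concentration of this sum around its small mean.

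Finally I would choose $\tau$ so that all three contributions, namely the Bernstein decay, the tail of $\max_i\|X_i\|$, and the centering correction, fit inside the target bound $C\tilde{\mathbf{r}}(R)\exp(-c\min(u^{2}/\sigma^{2},u/M))$. The condition $u\geq c_{1}\max\{M,\sigma\}$ is exactly what is needed to absorb the centering correction and the threshold factors coming from Minsker's theorem. The main obstacle I anticipate is this last balancing step: a naive choice $\tau\asymp M$ produces the correct Bernstein rate but only a constant bound on $\P(\max_i\|X_i\|>\tau)$, while $\tau$ proportional to $u$ destroys the subexponential decay in Bernstein; reconciling the two requires exploiting the freedom in the intrinsic-dimension factor together with the trace bound $\sum\E\|X_i\|^{2}\leq\tilde{\mathbf{r}}(R)\sigma^{2}$, which is the place where most care is needed.
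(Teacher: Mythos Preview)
Your overall architecture matches the paper's: truncate at some level $\tau$, apply Minsker's intrinsic-dimension Bernstein to the bounded part $Y_i$ (with the observation $\E Y_i^2\preceq \E X_i^2\preceq R$), and control the remainder. The bounded part is fine.

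The gap is in the remainder, and it is exactly the ``balancing step'' you flag. Your two proposed tools --- the crude tail bound $\P(\max_i\|X_i\|>\tau)\le 2e^{-\tau/M}$ for $\sum W_i$, and the Chebyshev-type estimate $\|\sum\E W_i\|\le \sum \E\|X_i\|^2/\tau\le \tr(R)/\tau=\tilde{\mathbf r}(R)\sigma^2/\tau$ for the centering --- cannot be reconciled. To get the required subexponential decay $e^{-cu/M}$ from the first you need $\tau\gtrsim u$, which kills the $u/\tau$ term in Bernstein; with $\tau\asymp M$ the first bound is only a constant. And the centering bound $\tilde{\mathbf r}(R)\sigma^2/\tau$ with $\tau\asymp M$ is \emph{not} $\le u/4$ in the relevant range (even after using that the target bound is trivial when $\tilde{\mathbf r}(R)e^{-c\min(\cdot)}\ge 1$). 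The ``scalar Bernstein on $\sum\|W_i\|$'' does not help either, because you have no better bound on $\E\sum\|W_i\|^2$ than $\tr(R)$, which again brings $\tilde{\mathbf r}(R)$ into the exponent. The freedom in the $\tilde{\mathbf r}(R)$ prefactor does not fix an exponent that is off by a factor $\tilde{\mathbf r}(R)$.

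What the paper does instead is to take $\tau=U:=8\,\E\max_i\|X_i\|\lesssim M$ and use the Hoffmann--J\o{}rgensen inequality in the Banach space $(\C^{n\times n},\|\cdot\|)$. With this choice $\P(\max_i\|Z_i\|>0)\le 1/8$, so Hoffmann--J\o{}rgensen (Propositions~6.8 and Theorem~6.21 in \cite{ledoux2013probability}) gives directly
\[
\Bigl\|\,\Bigl\|\sum_{i=1}^N Z_i-\E Z_i\Bigr\|\,\Bigr\|_{\psi_1}\ \lesssim\ \E\Bigl\|\sum_i Z_i-\E Z_i\Bigr\|+\bigl\|\max_i\|Z_i\|\bigr\|_{\psi_1}\ \lesssim\ M,
\]
hence $\P(\|\sum Z_i-\E Z_i\|>u)\le e^{-cu/M}$ for all $u\gtrsim M$. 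This single step simultaneously controls the random part and the centering of the tail piece, and eliminates the balancing problem: with $\tau\asymp M$ fixed, the bounded part gives exactly $\exp(-c\min(u^2/\sigma^2,u/M))$ and the remainder gives $\exp(-cu/M)$. So the missing idea in your plan is precisely Hoffmann--J\o{}rgensen; once you invoke it, the proof goes through with no case analysis on $\tau$.
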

	\begin{remark}
		Using the well known bound for the maximum of subexponential random variables (see \cite{ledoux2013probability}) we have 
		\[
		\bigl\| \max_{i \leq N} \| X_i \| \bigr\|_{\psi_1} \lesssim \log N \  \max_{i \leq N}\bigl\| \| X_i \| \bigr\|_{\psi_1}.
		\]
		Therefore, up to absolute constant factors we may state the bound for $M = \log N\  \max_{i \leq N}\bigl\| \| X_i \| \bigr\|_{\psi_1}$.
		When $n = 1$ the effective rank plays no role and our bound recovers the version of classical Bernstein inequality which is due to \cite{Adam08}. In this paper, it is also shown that the $\log N$ factor cannot be removed in general. This means that $M = \bigl\| \max_{i \leq N} \| X_i \| \bigr\|_{\psi_1}$ can not be replaced by $\max_{i \leq N}\bigl\| \| X_i \| \bigr\|_{\psi_1}$ in general. 
	\end{remark}
	
	\begin{proof}
		Fix \( U > 0 \) and consider the decomposition 
		\[
		X_{i} = Y_{i} + Z_{i},
		\qquad
		Y_{i} = X_{i} \Ind( \| X_i\| \leq U ),
		\qquad
		Z_{i} = X_{i} \Ind( \| X_i\| > U ),
		\]
		so that the matrices \( Y_{i} \) are uniformly bounded by \( U \) in the operator norm. By the triangle inequality and the union bound we have
		\[
		\P\left( \left\| \sum_{i = 1}^{N} X_i - \E X_i \right\| > 2u \right) \leq
		\P\left( \left\| \sum_{i = 1}^{N} Y_i - \E Y_i \right\| > u \right)
		+
		\P\left( \left\| \sum_{i = 1}^{N} Z_i - \E Z_i \right\| > u \right).
		\]
		Therefore, two parts can be treated separately. Throughout this proof $c > 0$ is an absolute constant which may change from line to line. It is known that uniformly bounded random matrices satisfy the Bernstein-type inequality (see Theorem 3.1 in \cite{Minsk17}) for $u \ge \frac{1}{6}(U + \sqrt{U^2 + 36\sigma^2})$
		\[
		\P\left( \left\| \sum_{i = 1}^{N} Y_i - \E Y_i \right\| > u  \right) \leq 14\tilde{\mathbf{r}}\left(\E\sum_{i = 1}^N (Y_{i} - \E Y_i)^2\right)\exp\left( -\frac{c u^{2}}{\left\|\sum\limits_{i = 1}^N {\E}(Y_i - \E Y_i)^2\right\| + Uu}  \right),
		\]
		where we used $ \| Y_i\| \leq U $. However, since we want to present this bound in terms of $X_i$ and not $Y_i$, we need the following modification of the proof of Minsker's theorem. Using the notation of his proof, it follows from Lemma 3.1 in \cite{Minsk17}:
		\[
		\log\E\exp(\theta(Y_i - \E Y_i))  \preceq  \frac{\phi(\theta U)}{U^2}\E (Y_i - \E Y_i)^2 \preceq \frac{\phi(\theta U)}{U^2} 2 \E Y_i^2 \preceq \frac{\phi(\theta U)}{U^2} 2\E X_i^2 ,
		\]
		{where \( \phi(t) = e^t - t - 1\).} Now, using the same lines of the proof, instead of formula (3.4) we have
		\[
		\E \tr\phi\left(\theta\sum\limits_{i = 1}^N(Y_i - \E Y_i)\right) \le \tr\left(\exp\left(\frac{\phi(\theta U)}{U^2} 2 \sum_{i = 1}^{N} \E X_i^2 \right)- I_d\right),
		\]
		and lines (3.5) with the condition \( \sum_{i = 1}^{n} \E X_i^2 \preceq R \) imply
		\[
		\exp\left(\frac{\phi(\theta U)}{U^2} 2 \sum_{i = 1}^{N} \E X_i^2 \right) - I_d \preceq \exp\left(\frac{2 \phi(\theta U)}{U^2} R \right) - I_d \preceq \frac{R}{\sigma^2}\exp\left(\frac{2\phi(\theta U)}{U^2}\sigma^2\right),
		\]
		where $\sigma^2 = \left\|R\right\|$. Following the last lines of the proof of Theorem 3.1, we finally have
		\begin{equation}
		\label{bernst}
		\P\left( \left\| \sum_{i = 1}^{N} Y_i - \E Y_i \right\| > u  \right) \leq 14\tilde{\mathbf{r}}\left(R\right)\exp\left( -\frac{c u^{2}}{\sigma^2 + Uu}  \right), 
		\end{equation}
		for $u \ge C\max\{U,\sigma\}$. 
		
		We proceed with the analysis of \( Z_i \). Set \( U = 8\E\max\limits_{i \leq N} \| X_i \| \), then we have by Markov's inequality
		\[
		\P\left(  \max_{k \leq N}\left\| \sum_{i = 1}^{k} Z_{i} \right\| > 0 \right)  \leq  \P\left( \max_{i \leq N} \| Z_{i} \| > 0 \right)	
		=
		\P\left( \max_{i\leq N} \| X_{i} \| > U \right) \leq 1/8.
		\]
		Thus, we can apply Proposition 6.8 from \cite{ledoux2013probability} to \( Z_i \) taking its values {in} the Banach space \( ({\C^{n \times n}}, \| \cdot \|) \) equipped with the spectral norm. We have
		\[
		\E \left\| \sum_{i = 1}^{N} Z_{i} \right\| \leq 8 \E \max_{i \leq N} \| Z_{i} \| ,
		\]
		which implies that for some absolute constant \( K > 0 \),
		\[
		\E \left\| \sum_{i = 1}^{N} Z_{i} - \E Z_i \right\| \leq 2 \E \left\| \sum_{i = 1}^{N} Z_{i} \right\| \leq 16 \E \max_{i \leq N} \| Z_{i} \|
		\leq K \bigl\| \max_{i \leq N} \| Z_{i} \| \bigr\|_{\psi_1}.
		\]
		Using Theorem 6.21 from \cite{ledoux2013probability} in \( ({\C^{n \times n}}, \| \cdot \|) \) we have,
		\begin{align*}
		\left\| \left\| \sum_{i = 1}^{N} Z_{i} - \E Z_i \right\| \right\|_{\psi_1} & \leq  K_1 \left( \E \left\| \sum_{i = 1}^{N} Z_{i} - \E Z_i \right\| + \bigl\| \max_{i \leq N} \| Z_i \| \bigr\|_{\psi_1} \right) \\
		& \leq 
		K_2 \bigl\| \max_{i \leq N} \| Z_i \| \bigr\|_{\psi_1},
		\end{align*}
		where \( K_1, K_2 > 0 \) are absolute constants.
		This implies that for any $u \ge \bigl\| \max_{i \leq N} \| Z_i \| \bigr\|_{\psi_1}$ we have
		\[
		\P\left( \left\| \sum_{i = 1}^{N} Z_{i} - \E Z_i \right\| > u  \right) \leq \exp \left(  - \frac{c u}{\bigl\| \max_{i \leq N} \| Z_i \| \bigr\|_{\psi_1}} \right),
		\]
		where \( c > 0 \) is an absolute constant. Combining it with \eqref{bernst} and that for some absolute $C > 0$ we have \( U \le C\bigl\| \max_{i \leq N} \| X_i \| \bigr\|_{\psi_1} \) and $\bigl\| \max_{i \leq N} \| Z_i \| \bigr\|_{\psi_1} \le \bigl\| \max_{i \leq N} \| X_i \| \bigr\|_{\psi_1}$, we prove the claim.
	\end{proof}
	
	To the best of our knowledge, the Proposition \ref{bernbound} is the first to combine two important properties: it simultaneously captures the effective rank instead of the dimension $n$ and is valid for matrices with subexponential operator norm ({the} matrix Bernstein inequality in the unbounded case was {previously} granted under the so-called Bernstein moment condition; we refer to \cite{Tropp12} and the references therein). We should also compare our results with Proposition 2 of \cite{Koltch11}. His inequality has the same form as our bound, but instead of the effective rank, the original dimension $n$ is used and $M = \bigl\| \max_{i \leq N} \| X_i \| \bigr\|_{\psi_1}$ is replaced by $\max_{i \leq N}\bigl\|\| X_i \| \bigr\|_{\psi_1}\log\left(N\left(\max_{i \leq N}\bigl\|\| X_i \| \bigr\|_{\psi_1}\right)^2/\sigma^2\right)$.

	\subsubsection*{An application to covariance estimation with missing observations}
	
	Now we turn to the problem studied in \cite{Koltch17} and \cite{Lounici14}. Suppose, we want to estimate the covariance structure of a random subgaussian vector $X \in \mathbb{R}^n$ (which will be assumed centered) based on $N$ i.i.d. observations $X_1, \ldots, X_N$. For the sake of brevity, we work with the finite-dimensional case, while as in \cite{Koltch17} our results do not depend explicitly on the dimension $n$. Recall that a centered random vector $X \in \mathbb{R}^n$ is \emph{subgaussian} if for all $u \in \mathbb{R}^n$ we have
	\begin{equation}
	\label{subg}
	\|\langle X, u\rangle\|_{\psi_2} \lesssim (\E\langle X, u\rangle^2)^{\frac{1}{2}}.
	\end{equation}
	Observe that this definition does not require any independence of {the} components of $X$.
	
	In what follows we discuss a more general framework suggested by \cite{Lounici14}. Let $\delta_{i, j}$, $i \le N, j \le n$ be independent Bernoulli random variables with the same mean $\delta$. We assume that instead of observing $X_{1}, \ldots, X_N$ we observe vectors $Y_{1}, \ldots, Y_N$ which are defined as $Y_{i}^{j} = \delta_{i,j}X_i^{j}$. This means that some components of vectors $X_{1}, \ldots, X_N$ are missing (replaced by zero), each with probability $1 - \delta$. Since $\delta$ can be easily estimated, we assume it to be known. Following \cite{Lounici14}, denote 
	\[
	\hat{\Sigma}^{(\delta)} = \frac{1}{N}\sum_{i = 1}^{N} Y_iY_i^\T. 
	\]
	It can be easily shown that the estimator 
	\[
	\hat{\Sigma} = (\delta^{-1} - \delta^{-2})\Diag(\hat{\Sigma}^{(\delta)}) + \delta^{-2} \hat{\Sigma}^{(\delta)}
	\]
	is an unbiased estimator of $\Sigma = \E X_i X_i^\T$. In particular,
	\begin{equation}
	\label{Sigma}
	\Sigma = (\delta^{-1} - \delta^{-2})\Diag(\E Y_iY_i^\T) + \delta^{-2}\E Y_iY_i^\T .
	\end{equation}
	\begin{theorem}\label{missing:prop}
		Under the assumptions defined above, it holds with probability at least $1 - e^{-t}$ for \( t \geq 1 \) that
		\[
		\| \hat{\Sigma} - \Sigma \| \lesssim \| \Sigma \| \max \left( \sqrt{\frac{\tilde{\mathbf{r}}(\Sigma) \log \tilde{\mathbf{r}}(\Sigma)}{N\delta^{2}}}, \sqrt{\frac{t}{N\delta^{2}}} , \frac{\tilde{\mathbf{r}}(\Sigma)(\log\tilde{\mathbf{r}}(\Sigma) + t)\log N }{N \delta^2}   \right).
		\]
	\end{theorem}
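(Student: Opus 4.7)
The plan is to apply the subexponential matrix Bernstein inequality of Proposition \ref{bernbound} to the i.i.d. centered Hermitian matrices \( Z_{i} = A_{i} - \E A_{i} \), where \( A_{i} = (\delta^{-1} - \delta^{-2})\Diag(Y_{i} Y_{i}^{\T}) + \delta^{-2} Y_{i} Y_{i}^{\T} \), so that by \eqref{Sigma} one has \( \hat{\Sigma} - \Sigma = \frac{1}{N}\sum_{i=1}^{N} Z_{i} \). It then suffices to identify two quantities: the \( \psi_{1} \)-norm \( M \) of \( \max_{i} \| Z_{i}/N \| \) and a positive-semidefinite majorant \( R \) of \( \sum_{i} \E (Z_{i}/N)^{2} \) whose operator norm and effective rank are controlled.

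For \( M \), since masking only shrinks coordinates I would use \( \| A_{i} \| \le 2 \delta^{-2} \| Y_{i} \|^{2} \le 2 \delta^{-2} \| X_{i} \|^{2} \). The standard Hanson-Wright inequality applied to \( A = I \) shows that \( \bigl\| \| X_{i} \|^{2} \bigr\|_{\psi_{1}} \lesssim \tr(\Sigma) = \tilde{\mathbf{r}}(\Sigma) \| \Sigma \| \) for a subgaussian vector \( X_{i} \). Passing to the maximum over \( N \) i.i.d. copies contributes at most a factor \( \log N \), and dividing by \( N \) produces \( M \lesssim \tilde{\mathbf{r}}(\Sigma) \| \Sigma \| \log N / ( N \delta^{2}) \).

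For \( R \) I would expand \( \E Z_{i}^{2} \preceq 2 \delta^{-4} \E (Y_{i} Y_{i}^{\T})^{2} + 2 \delta^{-2} \E \Diag(Y_{i} Y_{i}^{\T})^{2} \), exploit the identity \( (Y_{i} Y_{i}^{\T})^{2} = \| Y_{i} \|^{2} Y_{i} Y_{i}^{\T} \), and then condition on the Bernoulli masks. Combined with the subgaussian fourth-moment bound \( \E [ \| X_{i} \|^{2} X_{i} X_{i}^{\T} ] \preceq C \tr(\Sigma) \Sigma \) (obtained via Cauchy-Schwarz together with Hanson-Wright control of \( \E \| X_{i} \|^{4} \) and of \( \E \langle u, X_{i} \rangle^{4} \)) and an analogous estimate for the diagonal contribution, the mask combinatorics yield \( \sum_{i} \E Z_{i}^{2} \preceq C N \tr(\Sigma) \Sigma / \delta^{2} \). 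Hence one may take \( R = C \tilde{\mathbf{r}}(\Sigma) \Sigma / ( N \delta^{2} ) \), which gives both \( \sigma^{2} = \| R \| \lesssim \tilde{\mathbf{r}}(\Sigma) \| \Sigma \| / ( N \delta^{2} ) \) and, crucially, \( \tilde{\mathbf{r}}(R) \le \tilde{\mathbf{r}}(\Sigma) \) rather than the ambient dimension.

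Finally, I would feed these \( M \) and \( R \) into Proposition \ref{bernbound} and select the free tail parameter as \( s = t + \log \tilde{\mathbf{r}}(R) \), absorbing the \( \tilde{\mathbf{r}}(R) \) prefactor into the exponent and giving probability at least \( 1 - e^{-t} \). The resulting two-regime estimate \( \| \hat{\Sigma} - \Sigma \| \lesssim \sqrt{\sigma^{2} s} + M s \) unfolds to the maximum of the three announced terms after separating the \( t \) and the \( \log \tilde{\mathbf{r}}(\Sigma) \) contributions inside and outside the square root. The main obstacle is the second-moment step: obtaining the dimension-free shape \( R \propto \Sigma \) (and not merely \( R \propto \| R \| I \), which would inflate \( \tilde{\mathbf{r}}(R) \) to the ambient dimension and destroy the estimate) requires carefully coupling the subgaussian fourth-moment structure of \( X_{i} \) with the \( \delta, \delta^{2}, \delta^{3} \) combinatorics produced by the missing-observation masks.
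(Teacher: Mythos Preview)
Your one-shot application of Proposition~\ref{bernbound} does not deliver the theorem as stated. With your variance proxy \(\sigma^{2}=\|R\|\sim \tilde{\mathbf r}(\Sigma)\|\Sigma\|^{2}/(N\delta^{2})\) (note: your \(R=C\tilde{\mathbf r}(\Sigma)\Sigma/(N\delta^2)\) should really be \(R=C\tr(\Sigma)\Sigma/(N\delta^2)\), so \(\|R\|\) carries a \(\|\Sigma\|^{2}\), not \(\|\Sigma\|\)) and \(s=t+\log\tilde{\mathbf r}(\Sigma)\), the subgaussian branch \(\sqrt{\sigma^{2}s}\) produces
\[
\|\Sigma\|\sqrt{\frac{\tilde{\mathbf r}(\Sigma)\,(t+\log\tilde{\mathbf r}(\Sigma))}{N\delta^{2}}}
\;\asymp\;
\|\Sigma\|\max\!\left(\sqrt{\frac{\tilde{\mathbf r}(\Sigma)\log\tilde{\mathbf r}(\Sigma)}{N\delta^{2}}},\ \sqrt{\frac{\tilde{\mathbf r}(\Sigma)\,t}{N\delta^{2}}}\right),
\]
which matches the first announced term but \emph{not} the second: you obtain \(\sqrt{\tilde{\mathbf r}(\Sigma)\,t/(N\delta^{2})}\) rather than \(\sqrt{t/(N\delta^{2})}\). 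This extra \(\tilde{\mathbf r}(\Sigma)\) under the square root is exactly the ``unnecessary factor'' the paper singles out after \eqref{oldresult} as one of the improvements over Lounici's bound, so the discrepancy is not cosmetic.

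The paper removes it by a second ingredient that your plan omits. Bernstein (Proposition~\ref{bernbound}) is used only to control the \emph{expectation} (yielding \eqref{missing_off_bound} and \eqref{missing_diag_bound} and their integrated versions). The fluctuation around the expectation is then handled by the unbounded Talagrand inequality (Theorem~\ref{Adam08:thm}), whose variance parameter is the \emph{weak} variance \(\sup_{\|u\|=1}\sum_i \E f_u^{2}(X_i)\) rather than the matrix-variance norm \(\|R\|\). Lemma~\ref{norm} shows \(\E(u^{\T}\Off(YY^{\T})u)^{2}\lesssim \delta^{2}\|\Sigma\|^{2}\) and \(\E(u^{\T}\Diag(YY^{\T})u)^{2}\lesssim \delta\|\Sigma\|^{2}\), with \emph{no} \(\tilde{\mathbf r}(\Sigma)\). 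It is this that yields the clean \(\|\Sigma\|\sqrt{t/(N\delta^{2})}\). Your plan, which never invokes a Talagrand-type step or the weak-variance bounds, cannot reproduce it. (A smaller point: the coefficient \((\delta^{-1}-\delta^{-2})^{2}\) in your expansion of \(\E Z_i^{2}\) is \(\sim\delta^{-4}\) for small \(\delta\), not \(\delta^{-2}\); this is why the paper splits into \(\delta^{-2}\Off\) and \(\delta^{-1}\Diag\) parts and treats them separately using the \(\delta\)-exponents from Lemma~\ref{trace}.)
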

	\begin{remark}
		The {upper bound} above provides some important improvements upon Proposition 3 in \cite{Lounici14}, which is
		\begin{equation}
		\label{oldresult}
		\| \hat{\Sigma} - \Sigma \| \lesssim \| \Sigma \| \max \left( \sqrt{\frac{\tilde{\mathbf{r}}(\Sigma) \log n}{N\delta^{2}}}, \sqrt{\frac{\tilde{\mathbf{r}}(\Sigma)t}{N\delta^{2}}} , \frac{\tilde{\mathbf{r}}(\Sigma)(\log n + t)(\log N + t)}{N \delta^2}   \right)
		\end{equation}
		The bound \eqref{oldresult} depends on $n$ and therefore is not applicable in the infinite dimensional scenarios. It also contains a term proportional to \( t^{2} \), which appeared due to a straightforward truncation of each observation. Moreover, this result has an unnecessary factor $\tilde{\mathbf{r}}(\Sigma)$ in the term $\sqrt{\frac{\tilde{\mathbf{r}}(\Sigma)t}{N\delta^{2}}}$. Finally, when $\delta = 1$ tighter results may be obtained using high probability generic chaining bounds for quadratic processes. In particular, Theorem 9 in \cite{Koltch17} implies {with probability at least \( 1 - e^{-t} \),}
		\begin{equation}
		\label{Kotcheqnew}
		\|\hat{\Sigma} - \Sigma\| \lesssim \|\Sigma\|\max\left(\sqrt{\frac{\tilde{\mathbf{r}}(\Sigma) }{N}}, \sqrt{\frac{t}{N}}, \frac{\tilde{\mathbf{r}}(\Sigma)}{N},  \frac{t}{N} \right) \, .
		\end{equation}
		Unfortunately, this analysis may not be implied for $\delta < 1$ in general, since the assumption \eqref{subg} does not hold for the vector $Y$, defined by $Y_{i}^{j} = \delta_{i,j}X_i^{j}$. Therefore, our technique is a reasonable alternative {that} works for general $\delta$ and is almost as tight as \eqref{Kotcheqnew} when $\delta = 1$. {We also remark that for $\delta = 1$ even sharper versions of \eqref{Kotcheqnew} were obtained in \cite{Mendelson18}. However, their statistical procedure differs from the sample covariance matrix $\hat{\Sigma}$.}
	\end{remark}
	
	To prove Theorem \ref{missing:prop} we need the following technical lemma, parts of which may as well be found in \cite{Lounici14}.
	\begin{lemma}
		\label{trace}
		Let $X \in \mathbb{R}^n$ satisfy \eqref{subg} with covariance matrix $\Sigma${, and} $Y = (\delta_{1}X^1, \ldots, \delta_n X^n)$ where $\delta_i$, $i \le n$ are independent Bernoulli random variables with the same mean $\delta$. We have
		\[
		\bigl\|\|\Diag(YY^\T)  \|\bigr\|_{\psi_1}\lesssim \tilde{\mathbf{r}}(\Sigma)\|\Sigma\|, \qquad
		\bigl\|\|\Off(YY^\T)  \|\bigr\|_{\psi_1}  \lesssim \tilde{\mathbf{r}}(\Sigma)\|\Sigma\|.
		\]
		Additionally, it holds for some absolute constant $C > 0$ that
		\begin{equation}
		\label{Lounici}
		\E \Off(Y Y^{\T})^2 \preceq C\delta^2 \tr(\Sigma) (\Sigma + \Diag(\Sigma)), \quad \text{and} \quad \E \Diag(YY^\T)^2 \lesssim C\delta \tr(\Sigma) \Diag(\Sigma).
		\end{equation}
	\end{lemma}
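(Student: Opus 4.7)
The plan is to handle the four claims in order, with the first three following from standard subgaussian estimates and the fourth requiring a careful matrix expansion.

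For the two $\psi_1$-norm bounds, I would use $|Y_i| \le |X_i|$ to get $\|\Diag(YY^T)\|, \|\Off(YY^T)\| \le 2\|Y\|^2 \le 2\|X\|^2$, reducing matters to $\bigl\|\,\|X\|^2\bigr\|_{\psi_1} \lesssim \tr(\Sigma)$. Applying \eqref{subg} with $u = e_i$ gives $\|X_i\|_{\psi_2}^2 \lesssim \Sigma_{ii}$, so $\|X_i^2\|_{\psi_1} \lesssim \Sigma_{ii}$, and the triangle inequality for the Orlicz norm yields $\bigl\|\,\|X\|^2\bigr\|_{\psi_1} \le \sum_i \|X_i^2\|_{\psi_1} \lesssim \tr(\Sigma) = \tilde{\mathbf{r}}(\Sigma)\|\Sigma\|$. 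The diagonal second moment is direct: $\E\Diag(YY^T)^2 = \diag(\delta\,\E X_i^4) \preceq C\delta\,\tr(\Sigma)\Diag(\Sigma)$ by the subgaussian fourth-moment bound $\E X_i^4 \lesssim \Sigma_{ii}^2 \le \|\Sigma\|\Sigma_{ii}$.

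For $\E\Off(YY^T)^2$, writing $R = \diag(\delta_1,\ldots,\delta_n)$ and using $R^2 = R$ gives $\Off(YY^T) = R\,\Off(XX^T)\,R$, so computing $\Off(YY^T)^2 = R\,\Off(XX^T)\,R\,\Off(XX^T)\,R$ entrywise yields
\[
[\E\Off(YY^T)^2]_{ii} = \delta^2\, \E X_i^2 \!\sum_{k \ne i}\! X_k^2, \qquad [\E\Off(YY^T)^2]_{ij} = \delta^3\, \E X_i X_j \!\sum_{k \ne i,j}\! X_k^2 \ \ (i \ne j).
\]
Setting $Q := \E\|X\|^2 XX^T$ and $T_{ij} := \E X_i^3 X_j$, the identities $\E X_i^2\sum_{k\ne i}X_k^2 = Q_{ii} - \E X_i^4$ and $\E X_iX_j\sum_{k\ne i,j}X_k^2 = Q_{ij} - T_{ij} - T_{ji}$ let me rewrite
\[
\E\Off(YY^T)^2 = \delta^3 Q + (\delta^2 - \delta^3)\Diag(Q) - \delta^3\Off(T + T^T) - \delta^2 \diag(\E X_i^4),
\]
and the last (negative semidefinite) term can be dropped. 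The key ingredient is the inequality $Q \preceq C\,\tr(\Sigma)\,\Sigma$, which I would prove by Cauchy--Schwarz: $u^T Q u = \E\|X\|^2\langle X,u\rangle^2 \le \sqrt{\E\|X\|^4}\sqrt{\E\langle X,u\rangle^4} \lesssim \tr(\Sigma)\,u^T\Sigma u$, using $\E\|X\|^4 \le \bigl\|\,\|X\|^2\bigr\|_{\psi_1}^2 \lesssim \tr(\Sigma)^2$ from the first step and $\E\langle X,u\rangle^4 \lesssim (u^T\Sigma u)^2$ by \eqref{subg}. This controls $\delta^3 Q \preceq C\delta^2\tr(\Sigma)\Sigma$ and $\delta^2\Diag(Q) \preceq C\delta^2\tr(\Sigma)\Diag(\Sigma)$.

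The main obstacle is the indefinite cross term $-\delta^3\Off(T + T^T)$, whose sign is unclear a priori. My plan is to bound it via a second Cauchy--Schwarz: with $\tilde X_i := X_i^3$, one has $|u^T(T + T^T)u| \le 2\sqrt{u^T\Sigma u}\,\sqrt{\E\langle\tilde X,u\rangle^2}$, and the subgaussian sixth-moment bound $\E X_i^6 \lesssim \Sigma_{ii}^3$ together with a further Cauchy--Schwarz step gives $\E\langle\tilde X,u\rangle^2 \lesssim \tr(\Sigma)^2\,u^T\Diag(\Sigma)u$. An AM--GM step then yields $|u^T(T + T^T)u| \lesssim \tr(\Sigma)\,u^T(\Sigma + \Diag(\Sigma))u$, and the same inequality passes to $\Off(T + T^T)$ since $\Diag(T + T^T) = 2\diag(\E X_i^4)$ is absorbed into $\|\Sigma\|\Diag(\Sigma) \preceq \tr(\Sigma)\Diag(\Sigma)$. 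Combining all four contributions gives $\E\Off(YY^T)^2 \preceq C\delta^2\,\tr(\Sigma)(\Sigma + \Diag(\Sigma))$, as claimed.
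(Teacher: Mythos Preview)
Your proof is correct. The $\psi_1$ bounds and the diagonal second-moment bound are handled exactly as in the paper. For the off-diagonal second moment your argument is sound: the entrywise computation, the decomposition into $Q$, $\Diag(Q)$, $\Off(T+T^\top)$ and $\diag(\E X_i^4)$, and the Cauchy--Schwarz/AM--GM treatment of the third-moment matrix $T$ all check out.

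The route for $\E\Off(YY^\top)^2$, however, differs from the paper's. The paper separates the two sources of randomness: it first fixes $X$ and computes $\E_{\boldsymbol\delta}(\boldsymbol\delta\boldsymbol\delta^\top\odot A)^2$ for a \emph{general} symmetric $A$, obtaining
\[
\E_{\boldsymbol\delta}(\boldsymbol\delta\boldsymbol\delta^\top\odot A)^2 \preceq \delta^2\bigl(A^2+\Diag(A^2)\bigr)+\delta\,\Diag(A)^2,
\]
where the indefinite cross-term $\Off(A)\Diag(A)+\Diag(A)\Off(A)$ is absorbed algebraically via $\Off(A)\Diag(A)+\Diag(A)\Off(A)\preceq\tfrac12 A^2$. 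Specializing to $A=\Off(XX^\top)$ (zero diagonal kills the $\delta$-term) and only then averaging over $X$ reduces everything to $\E(XX^\top)^2\preceq C\tr(\Sigma)\Sigma$, which the paper cites from \cite{Lounici14}. Your approach instead expands both averages simultaneously; this forces you to confront the third-moment matrix $T_{ij}=\E X_i^3X_j$ explicitly and to control it via sixth moments, whereas in the paper's factorized route this object never appears. On the other hand, your argument is fully self-contained (you reprove $Q=\E\|X\|^2XX^\top\preceq C\tr(\Sigma)\Sigma$ rather than citing it), while the paper's two-step decomposition is more modular and reusable for other Hadamard-masked matrices.
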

	
	\begin{proof}
		Observe that \( \| \Diag(Y Y^{\T}) \| \leq \| Y \|^{2}  \) and \( \| \Off(YY^\T) \| \leq \| Y Y^{\T} \| + \| \Diag(Y Y^{\T}) \| \leq 2 \| Y \|^{2} \). Therefore,
		\[
		\left\| \| \Off(YY^\T) \| \right\|_{\psi_1} \leq 2 \| \| Y \| \|_{\psi_2}^{2} \leq 2 \| \| X \| \|_{\psi_2}^{2} \lesssim \tr(\Sigma), 
		\]
		and the same bound holds for \(  \left\| \| \Diag(YY^\T) \| \right\|_{\psi_1}  \).
		
		Let \( A \) be an arbitrary symmetric matrix and let us calculate \( \E (A \odot \deltav \deltav^{\T})^2 \) where \( \odot \) denotes the Hadamard product and \( \deltav = (\delta_1, \dots, \delta_n) \) is a vector with independent components having Bernoulli distribution with the same mean $\delta$. We have,
		\[
		\left[\E (A \odot \deltav \deltav^{\T})^2\right]_{ii} = \E \sum_{k} A_{ik} \delta_i \delta_k  A_{ki} \delta_i \delta_k = \sum_{k} A_{ik} A_{ik} \E \delta_i^2 \delta_k^2 = \delta^2 [A^2]_{ii} + (\delta - \delta^2) A_{ii}^2.
		\]
		If $i \neq j$ we have for the \( i,j \)-th element 
		\begin{align*}
		\left[\E (A \odot \deltav \deltav^{\T})^2\right]_{ij} &= \E \sum_{k} A_{ik} \delta_i \delta_k A_{kj} \delta_j \delta_k = \sum_{k} A_{ik} A_{kj} \E \delta_i \delta_j \delta_k^2 \\ &= \delta^3 [A^2]_{ij} + (\delta^2 -\delta^3)(A_{ii} A_{ij} + A_{ij} A_{jj}).
		\end{align*}
		This can be put together in the following expression,
		\begin{align*}
		\E (\deltav \deltav^{\T} \odot A)^{2} &=  \delta^3 A^2 + (\delta^2 - \delta^3) \left[ \Diag(A^2) + \Off(A) \Diag(A) + \Diag(A) \Off(A)\right] \\
		& + (\delta - \delta^2) \Diag(A)^2.
		\end{align*}
		Note that all of these matrices are positive semi-definite, apart from the term \( \Off(A) \Diag(A) + \Diag(A) \Off(A) \), which we can obviously bound by \( \frac{1}{2} ( \Off(A) + \Diag(A)  )^2 = A^2 / 2 \). Taking into account \( \delta \leq 1 \), we have the following
		\begin{align*}
		\E (\deltav \deltav^{\T} \odot A)^{2} & \preceq  \frac{1}{2}(\delta^3 + \delta^2) A^2 + (\delta^2 - \delta^3) \Diag(A^2) + (\delta - \delta^2) \Diag(A)^2 \\
		&\preceq \delta^2( A^2 + \Diag(A^2)) + \delta \Diag(A)^2.
		\end{align*}
		Recall that \( Y = \diag(\deltav) X \). Therefore, we have \( \Off(YY^\T) = \deltav \deltav^{\T} \odot \Off(X X^\T) \). Since the latter has zero diagonal, the term with \( \delta \) in the formula above disappears. Therefore,
		\begin{equation}\label{YYT_to_XXT}
		\E \Off(Y Y^\T)^2 \preceq \delta^2 \left[ \E \Off(XX^\T)^2 + \Diag\left(  \E \Off(XX^\T)^2  \right) \right] .
		\end{equation}
		It holds \( \E \Off(XX^{{\T}})^2 \preceq 2 \E (XX^\T)^2 + 2 \E \Diag(XX^{{\T}})^2 \), and we also have from \cite{Lounici14} that \( \E (XX^\T)^2 \preceq C\tr(\Sigma) \Sigma \). Additionally, due to \eqref{subg} we immediately have \( \E X_i^4 \lesssim \Sigma_{ii}^2 \). Finally, the following bound holds
		\[
		\E \Diag(XX^\T)^2  \preceq C\Diag(\Sigma)^2 \preceq C\tr(\Sigma) \Diag(\Sigma).
		\]
		Plugging {these} bounds into \eqref{YYT_to_XXT} we get the second inequality. As for the diagonal case we have for \( A = \Diag(XX^\T) \),
		\[
		{\E \Diag(YY^\T)^2} \preceq {C} \delta \E \Diag(XX^\T)^2 \preceq C \delta \tr(\Sigma) \Diag(\Sigma).
		\]
	\end{proof}

	\begin{lemma}\label{norm}
		For \( Y \) as in Lemma~\ref{trace} and any unit {vector} \( u \in \R^{n} \) we have
		{\[
		\E ( u^{\T} \Off(YY^\T) u )^{{2}} \lesssim \delta^{2} \| \Sigma \|^2,
		\qquad
		\E ( u^{\T} \Diag(YY^{\T}) u )^{{2}} \lesssim \delta \| \Sigma \|^2.
		\]}
	\end{lemma}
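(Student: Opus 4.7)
The plan is to compute each second moment directly by expansion, using independence of the Bernoulli mask \( \deltav \) from \( X \), together with the subgaussian fourth-moment consequence \( \E \langle X, v\rangle^{4} \lesssim (v^{\T}\Sigma v)^{2} \) of \eqref{subg} and the Cauchy--Schwarz bound \( \E X_i^{2} X_j^{2} \le (\E X_i^{4}\,\E X_j^{4})^{1/2} \lesssim \Sigma_{ii}\Sigma_{jj} \).

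For the diagonal quantity, \( u^{\T}\Diag(YY^{\T}) u = \sum_{i} u_i^{2}\delta_i X_i^{2} \), and expanding the square while splitting the sum into \( i=j \) and \( i\ne j \) gives
\[
\E (u^{\T}\Diag(YY^{\T}) u)^{2} = \delta\sum_{i} u_i^{4}\,\E X_i^{4} + \delta^{2}\sum_{i\ne j}u_i^{2}u_j^{2}\,\E X_i^{2}X_j^{2} \lesssim \delta\|\Sigma\|^{2},
\]
since \( \sum_{i} u_i^{4}\Sigma_{ii}^{2} \le \|\Sigma\|^{2} \) (as \( \sum_{i} u_i^{4} \le 1 \)) and \( (u^{\T}\Diag(\Sigma) u)^{2} \le \|\Sigma\|^{2} \).

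The off-diagonal bound is the delicate case: naive expansion gives only \( \delta\|\Sigma\|^{2} \), because the ``all four indices equal'' configuration contributes at order \( \delta \). To extract the missing factor of \( \delta \), I set \( \tilde\delta_i = \delta_i - \delta \) and use the identity \( \delta_i\delta_j = \delta^{2} + \delta(\tilde\delta_i + \tilde\delta_j) + \tilde\delta_i \tilde\delta_j \), which yields \( g := u^{\T}\Off(YY^{\T}) u = A + B + C \) with
\begin{align*}
A &= \delta^{2} u^{\T}\Off(XX^{\T}) u, \\
B &= 2\delta \sum_{i\ne j} u_i u_j X_i X_j \,\tilde\delta_i, \\
C &= \sum_{i\ne j} u_i u_j X_i X_j \,\tilde\delta_i \tilde\delta_j.
\end{align*}
Since \( \E[B\mid X] = \E[C\mid X] = 0 \), the tower property eliminates the cross terms \( \E(AB) \) and \( \E(AC) \), so \( \E g^{2} = \E A^{2} + \E B^{2} + \E C^{2} + 2\E(BC) \). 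For \( \E A^{2} \), use \( (u^{\T}\Off(XX^{\T}) u)^{2} \le 2(u^{\T}X)^{4} + 2(\sum_i u_i^{2} X_i^{2})^{2} \) and the moment bounds above to obtain \( \E A^{2} \lesssim \delta^{4}\|\Sigma\|^{2} \). For \( \E B^{2} \), note that conditional on \( X \) we have \( B = 2\delta \sum_{i}\tilde\delta_i u_i X_i (u^{\T}X - u_iX_i) \), so \( \mathrm{Var}[B\mid X] \le 4\delta^{3}\sum_{i} u_i^{2}X_i^{2}((u^{\T}X)^{2} + u_i^{2}X_i^{2}) \), whose expectation is \( \lesssim \delta^{3}\|\Sigma\|^{2} \). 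For \( \E C^{2} \), the only nonzero contributions to \( \mathrm{Var}[C\mid X] \) come from the configurations \( \{i,j\} = \{k,l\} \) (the other configurations vanish by independence and centering of the \( \tilde\delta \)'s), giving \( \mathrm{Var}[C\mid X] \le 2\delta^{2}\sum_{i\ne j}u_i^{2}u_j^{2}X_i^{2}X_j^{2} \) and hence \( \E C^{2} \lesssim \delta^{2}\|\Sigma\|^{2} \). Finally \( |\E(BC)| \le (\E B^{2}\,\E C^{2})^{1/2} \lesssim \delta^{5/2}\|\Sigma\|^{2} \le \delta^{2}\|\Sigma\|^{2} \). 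Summing the four contributions delivers \( \E g^{2} \lesssim \delta^{2}\|\Sigma\|^{2} \).

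The main obstacle is the off-diagonal estimate: one must see that the true leading-order contribution is \( \delta^{2} \) rather than the \( \delta \) produced by a brute-force expansion, and the right way to expose this is the centering decomposition \( \delta_i\delta_j = \delta^{2} + \delta(\tilde\delta_i + \tilde\delta_j) + \tilde\delta_i\tilde\delta_j \) combined with conditioning on \( X \); everything else reduces to repeated applications of the subgaussian fourth-moment bound and Cauchy--Schwarz.
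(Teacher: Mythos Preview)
Your proof is correct. The diagonal case is handled identically to the paper, and your off-diagonal argument is sound: the decomposition $\delta_i\delta_j = \delta^2 + \delta(\tilde\delta_i+\tilde\delta_j) + \tilde\delta_i\tilde\delta_j$, the vanishing of the cross terms with $A$ by conditioning on $X$, and the variance computations for $B$ and $C$ all go through as stated.

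The paper takes a closely related but slightly different path for the off-diagonal term. It first proves a general deterministic-in-$(\av,\bv)$ inequality
\[
\E\Bigl(\sum_{i\ne j}\delta_i\delta_j a_i b_j\Bigr)^2 \le 18\delta^2\|\av\|^2\|\bv\|^2 + 2\delta^4\Bigl(\sum_i a_i\Bigr)^2\Bigl(\sum_i b_i\Bigr)^2,
\]
by centering the $\delta_i$'s and then invoking a decoupling inequality (Theorem~6.1.1 in \cite{Vershynin2016HDP}) to replace one copy of $\tilde\delta$ by an independent one; it then applies this with $\av=\bv=\diag(u)X$ and finishes using $\E\|\diag(u)X\|^4 \lesssim \|\Sigma\|^2$ for the subgaussian vector $\diag(u)X$. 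So both arguments hinge on the same key idea---centering the Bernoulli mask to expose the $\delta^2$ scaling---but the paper packages the computation via decoupling and a single subgaussian norm estimate, whereas you carry out the variance of the centered bilinear form by hand and control the $X$-moments term by term via Cauchy--Schwarz. Your route is more elementary (no decoupling lemma needed); the paper's route is a bit more modular and yields the intermediate inequality above as a standalone fact.
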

	\begin{proof}
		Let \( v \in \R^{n} \) be an arbitrary unit vector. First, we want to check that
		\begin{equation}
		\label{uXXTv_L4}
		\| u^{\T} \Diag(XX^{\T}) v \|_{L_4} \lesssim \| \Sigma \|,
		\qquad
		\| u^{\T} \Off(XX^{\T}) v \|_{L_4} \lesssim \| \Sigma \|.
		\end{equation}
		Obviously, \( \| u^{\T} XX^\T v \|_{L_4} \leq \| u^\T X \|_{L_8} \| v^{\T} X \|_{L_8} \lesssim \| \Sigma \| \), so it is enough to check that only for the diagonal. Let us apply the symmetrization argument. Let \( \epsv= (\eps_1, \dots, \eps_{{n}})^{\T} \) denote independent Rademacher variables. Then,
		\[
		u^{\T} \Diag(XX^{\T}) v = \E_{\epsv} \epsv^{\T} \diag(u) X X^{\T} \diag(v) \epsv = \E_{\epsv} u_{\epsv} X X^{\T} v_{\epsv},
		\]
		where \( u_{\epsv} = (u_1 \eps_1, \dots, u_{{n}} \eps_{{n}})^{\T} \) {(and similarly for \(v_{\epsv}\))} and \( \E_{\epsv} \) denotes the conditional expectation with respect to \( \eps \) given $X$. Then, by Jensen's and H\"older's inequalities,
		\[
		\E \left(u^{\T} \Diag(XX^{\T}) {v} \right)^{4} \leq \E \left(u_{\epsv}^{\T} X X^{\T} {v_{\epsv}} \right)^{4} = \E_{\epsv} \E^{1/2}[(u_{\epsv}^{\T} X)^{8} \cond \epsv] \E^{1/2} [(v_{\epsv}^{\T} X)^{8} \cond \epsv] \lesssim \| \Sigma \|^{4},
		\]
		thus implying \eqref{uXXTv_L4}.
		
		\def\bv{\mathbf{b}}
		{
		Consider two vectors \( \av, \bv \in \R^{n} \). We show the following bound,
		\begin{equation}\label{missing_delta_square}
		    \E \left( \sum_{i \neq j} \delta_i \delta_j a_i b_j \right)^{2} \leq 18 \delta^{2} \| \av \|^2 \| \bv \|^2 + 2 \delta^{4} \left( \sum_{i} a_i \right)^{2} \left( \sum_{i} b_i \right)^{2} .
		\end{equation}
		First, using \( \E Z^{2} = \E (Z - \E Z)^{2} + (\E Z)^{2} \) and the fact that \( \E \delta_i = \delta \) we have,
		\[
		    \E \left( \sum_{i \neq j} \delta_i \delta_j a_i b_j \right)^{2} = \E \left( \sum_{i \neq j} (\delta_i - \delta) (\delta_j - \delta) a_i b_j \right)^{2} + \left( \sum_{i \neq j} \delta^{2} a_i b_j \right)^{2} \, .
		\]
		To the first term we apply the decoupling inequality (Theorem~6.1.1 in \cite{Vershynin2016HDP}). Namely, defining \( \delta_1', \dots, \delta_n' \) as independent copies of \( \delta_1, \dots, \delta_n\) we have,
		\begin{align*}
		    \E \left( \sum_{i \neq j} (\delta_i - \delta) (\delta_j - \delta) a_i b_j \right)^{2} &\leq 16 \E \left( \sum_{i \neq j} (\delta_i - \delta) (\delta_j' - \delta) a_i b_j \right)^{2} \\
		    & =
		    16 \sum_{i \neq j} \sum_{k \neq l} \E (\delta_i - \delta) (\delta_j' - \delta) (\delta_k - \delta) (\delta_l' - \delta) a_i b_j a_k b_l,
		\end{align*}
		where in the last sum only the terms with \( k = i \) and \( l = j \) do not vanish. Since \(\E (\delta_i - \delta)^{2} = \delta(1 - \delta) \), we have
		\[
		    \E \left( \sum_{i \neq j} (\delta_i - \delta) (\delta_j - \delta) a_i b_j \right)^{2} \leq 16 \sum_{i \neq j} a_i^{2} b_j^{2} \delta^{2} (1 - \delta)^2
		    \leq 16 \delta^{2} \| \av \|^{2} \| \bv \|^{2} .
		\]
		It remains to bound the second term. Using \( (x + y)^{2} \leq 2x^2 + 2y^2 \) together with the Cauchy-Schwarz inequality, we have 
		\begin{align*}
		    \left( \sum_{i \neq j} \delta^{2} a_i b_j \right)^{2} &\leq 2 \delta^{4} \left( \sum_{i} a_i b_i \right)^{2} + 2 \delta^{4} \left(\sum_{i, j} a_i b_j\right)^{2} \\
		    & \leq 2 \delta^{4} \| \av \|^{2} \| \bv\|^{2} + 2 \delta^{4} \left(\sum_{i} a_i \right)^{2} \left(\sum_{i} b_i\right)^{2}.
		\end{align*}
		Putting these two bounds together and using \( \delta \leq 1 \) we get the required inequality.}	{
		Since \( u^{\T} \Off(YY^\T) v = \deltav^{\T} \diag(u) \Off(XX^{\T}) \diag(v) \deltav \), we can apply \eqref{missing_delta_square} with \( \av = \diag(u) X \) and \( \bv = \diag(u) X \). This implies
		\begin{align*}
		    \E \left( u^{\T} \Off(YY^\T) v \right)^{2} &\lesssim \delta^{2} \E \| \diag(u) X \|^{2}  \| \diag(v) X \|^{2} + \delta^{4} \E (u^{\T} X)^{2} (v^{\T} X)^{2}  \\
		    & \lesssim \delta^{2} \E^{1/2} \| \diag(u) X \|^{4} \E^{1/2} \| \diag(v) X \|^{4} + \delta^{4} \E^{1/2} (u^{\T} X)^{4} \E^{1/2} (v^{\T} X)^{4} \, .
		\end{align*}
		Due to \eqref{subg} we have \( \E^{1/4} (u^{\T} X)^{4} \lesssim \| \Sigma \|^{1/2} \). Moreover, the vector \( \diag(u) X \) also satisfies the subgaussian assumption \eqref{subg} and has the covariance matrix \( \diag(u) \Sigma \diag(u) \). Therefore, we have
		\[
		    \E^{1/2} \| \diag(u) X \|^{4} \lesssim \tr(\diag(u) \Sigma \diag(u)) \lesssim \sum_{i} u_{i}^{2} \Sigma_{ii} \lesssim \max_{i} \Sigma_{ii} \lesssim \| \Sigma \| ,
		\]
		where we used that $\|u\| = 1$. Similar inequalities hold for the vector \(v \).
        Therefore, we conclude that
		\[
		\E \left( u^{\T} \Off(YY^{\T}) u \right)^{2} \lesssim \delta^{2} \| \Sigma \|^{2}.
		\]
		Finally, we have for the diagonal term
		\begin{align*}
		\E \left( u^{\T} \Diag(YY^{\T}) v \right)^{2} & =  \E \left( \sum_{i} \delta_i u_i v_i X_i^{2} \right)^{2} = \delta^2 \E \left( u^\T \Diag(XX^{\T}) v \right)^2 + (\delta - \delta^{2}) \sum_{i} u_{i}^2 v_i^2 \E X_{i}^{4} \\
		& \lesssim  \delta^2 \| \Sigma \|^{2} + (\delta - \delta^{2})  \sum_{i} u_{i}^{2} v_i^2 \| \Sigma \|^2
		\lesssim \delta \| \Sigma \|^{2}.
		\end{align*}
		}
	\end{proof}%
	
	Before we present the proof of the deviation bound, let us recall the following version of Talagrand's concentration inequality for empirical processes. Remarkably, the following result can be proven using very similar techniques: at first, one may use the modified logarithmic Sobolev inequality to prove a version of Talagrand's concentration inequality in the bounded case and then use the same truncation argument as in the proof of Theorem~\ref{mainthm} to get the result in the unbounded case.
	
	\begin{theorem}[Theorem 4 in \cite{Adam08}]\label{Adam08:thm}
		Let \( X_1, \dots, X_N \in \mathcal{X} \) be a sample of independent random variables and \( \mathcal{F} \) be a countable class of measurable functions \( \mathcal{X} \mapsto \R \) such that \( \sup_{f \in \FF} \| f(X_i) \|_{\psi_1} < \infty \). Define
		\begin{equation}\label{adam_talagrand_Z}
		Z_{\FF} = \sup_{f \in \FF} \left| \sum_{i = 1}^{N} f(X_i) - \E f(X_i) \right|
		\end{equation}
		and \( \sigma^{2} = \sup_{f \in \FF} \sum_{i = 1}^{N} \E f^{2}(X_i) \). There is an absolute constant \( C > 0 \) such that
		\[
		\P\left(  Z_{\FF} > 2 \E Z_{\FF} + t \right) \leq \exp\left( -\frac{t^2}{4 \sigma^{2}} \right) + 3 \exp\left(  -\frac{t}{C \| \max_i \sup_f |f(X_i)|\|_{\psi_1} } \right).
		\]
	\end{theorem}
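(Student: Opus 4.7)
The plan is to prove this result by following exactly the template suggested in the remark preceding the statement: first establish a Bousquet-type concentration inequality in the bounded case via the modified logarithmic Sobolev inequality, and then extend to subexponential summands by truncation, mirroring the argument used in the proof of Theorem~\ref{mainthm}.

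\textbf{Stage 1 (bounded case).} Suppose first that $|f(X_i)| \le U$ almost surely for all $f \in \FF$ and all $i \le N$; by a monotone-convergence approximation it suffices to treat finite $\FF$. Applying the symmetrized modified logarithmic Sobolev inequality \eqref{mod_log-sob} to $Z = Z_{\FF}$, one gets $\Ent(e^{\lambda Z}) \le \lambda^2 \E V_+ e^{\lambda Z}$ for every $\lambda \ge 0$, with $V_+ = \E'\sum_i (Z - Z_i')_+^2$. If $f^* \in \FF$ (depending on $X$) attains the supremum in $Z_{\FF}$, then $(Z - Z_i')_+^2 \le (f^*(X_i) - f^*(X_i'))^2$ by the same argument as in Lemma~\ref{bounded_with_diag:thm}, and using $|f^*(X_i) - f^*(X_i')| \le 2U$ one obtains $V_+ \lesssim \sigma^2 + U \sum_i |f^*(X_i)|$. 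The first term is deterministic; the second, being bounded pointwise by $U(Z + \sum_i \E f^*(X_i))$, is decoupled from $e^{\lambda Z}$ via the entropy variational formula \eqref{varformula} in exactly the manner of Lemma~\ref{grad_trunc_lem}. A Herbst integration then yields the Bousquet-type bound
\[
\P(Z_{\FF} \ge \E Z_{\FF} + s) \le \exp\bigl(-c \min\{s^2/(\sigma^2 + U \E Z_{\FF}),\ s/U\}\bigr).
\]

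\textbf{Stage 2 (truncation).} Set $M = 8\, \E \max_i \sup_{f \in \FF} |f(X_i)|$ and split $f(X_i) = f(X_i)\Ind_{A_i} + f(X_i)\Ind_{A_i^c}$ with $A_i = \{\sup_{f\in\FF} |f(X_i)| \le M\}$. The truncated sum has summands of magnitude at most $M$, so Stage 1 applies with $U = M$ and a variance proxy at most $\sigma^2$, producing the first subgaussian tail. For the remainder, view $\sum_i f(X_i)\Ind_{A_i^c}$ as a random element of $\ell_\infty(\FF)$; by Markov's inequality $\P(\max_i \Ind_{A_i^c} > 0) \le 1/8$, so the Hoffman-J\o{}rgensen inequality (Proposition~6.8 in \cite{ledoux2013probability}) applies and, combined with Theorem~6.21 of \cite{ledoux2013probability}, bounds the $\psi_1$-norm of the residual supremum by an absolute multiple of $\| \max_i \sup_f |f(X_i)|\|_{\psi_1}$. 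This yields the second tail term. Comparing $\E Z_{\FF}$ to the expectation of the truncated supremum (again via Hoffman-J\o{}rgensen in expectation) absorbs the constant into the factor $2$ in front of $\E Z_{\FF}$ in the final statement.

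\textbf{Expected main obstacle.} The delicate step is Stage 1: the wimpy variance naturally produces the \emph{random} linear term $U \sum_i |f^*(X_i)|$ rather than the deterministic $U \E Z_{\FF}$ appearing in the bound. Turning the former into the latter requires the decoupling in \eqref{varformula} to be performed so that the resulting differential inequality for $\log \E e^{\lambda(Z_{\FF} - \E Z_{\FF})}$ is integrable; this is exactly the step in Bousquet's argument that accounts for the factor $2$ in front of $\E Z_{\FF}$. Stage 2 is essentially mechanical once Stage 1 is in place, since every required moment and tail estimate for the tail-truncated part has already been used in the same form in the proof of Theorem~\ref{mainthm}.
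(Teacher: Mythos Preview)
The paper does not contain a proof of this theorem: it is quoted verbatim as Theorem~4 of \cite{Adam08} and used as a black box. The only thing the paper offers is the one-sentence remark immediately preceding the statement, namely that the result ``can be proven using very similar techniques: at first, one may use the modified logarithmic Sobolev inequality to prove a version of Talagrand's concentration inequality in the bounded case and then use the same truncation argument as in the proof of Theorem~\ref{mainthm} to get the result in the unbounded case.'' Your proposal is precisely an expansion of that remark, so at the level of strategy you are aligned with what the authors had in mind; there is simply no ``paper's own proof'' to compare line by line.

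On the content of your sketch: Stage~2 is indeed mechanical and matches the Hoffman--J\o{}rgensen\,/\,Theorem~6.21 argument the paper already carries out for $\|W\|$ in Section~\ref{expef} and for $\|\sum Z_i\|$ in Proposition~\ref{bernbound}. Stage~1 is where care is needed, and you correctly flag it. The step ``$V_+ \lesssim \sigma^2 + U\sum_i |f^*(X_i)|$, the second term being bounded pointwise by $U(Z + \sum_i \E f^*(X_i))$'' is not quite right as written: $f^*$ is random, so $\sum_i \E f^*(X_i)$ is not a well-defined deterministic quantity, and $\sum_i |f^*(X_i)|$ is not dominated by $Z_{\FF}$ (which involves the absolute value of the \emph{sum}, not the sum of absolute values). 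The correct route---which is what Bousquet and Massart do and what \cite{Adam08} follows---is to bound $V_+ \le \sum_i f^*(X_i)^2 + \sigma^2$ (after taking $\E'$), then use $\sum_i f^*(X_i)^2 \le 2U Z_{\FF} + \sigma^2$ when the class is symmetric and centered, which feeds a term $U Z$ (not $U\sum_i |f^*(X_i)|$) into the entropy inequality. That random $U Z$ term is what produces the factor $2$ in front of $\E Z_{\FF}$ after the Herbst argument. So your plan is right, but the bookkeeping in the ``expected main obstacle'' paragraph needs to be redone along these lines.
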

	
	\begin{proof}[Proof of Theorem~\ref{missing:prop}]
		At first, using \eqref{Sigma} we have
		\[
		\|\hat{\Sigma} - \Sigma\| \lesssim 
		\delta^{-1} \left\| \Diag(\hat{\Sigma}^{(\delta)}) - \E \Diag(\hat{\Sigma}^{(\delta)})\right\| + \delta^{-2} \left\| \Off(\hat{\Sigma}^{(\delta)}) - \E\Off(\hat{\Sigma}^{(\delta)})\right\|.
		\]
		Let us apply Proposition \ref{bernbound} to the term
	    $
		N\Off(\hat{\Sigma}^{(\delta)}) =  \sum_{i = 1}^{N} \Off(Y_{i} Y_{i}^{\T}),
		$
		where
		\[
		R = C N \delta^2 \tr(\Sigma) (\Sigma + \Diag(\Sigma)).
		\]
		We have \( \tilde{\mathbf{r}}(R) \leq 2 \tilde{\mathbf{r}}(\Sigma) \) and \( \| R \| \lesssim N \delta^2 \tr(\Sigma) \| \Sigma\|  \). Therefore, with probability at least $1 - e^{-t},$
		\begin{align}
		\| \Off(\hat{\Sigma}^{(\delta)}) - \E \Off(\hat{\Sigma}^{(\delta)}) \| & \lesssim 
		\max\left( \sqrt{\frac{ \delta^2 \tr(\Sigma) \| \Sigma\|(\log \rr(\Sigma) + t)}{N}}, \frac{\Tr(\Sigma)(\log \rr(\Sigma) + t)\log N}{N}  \right) \nonumber \\
		& = 
		\| \Sigma \| \max\left( \sqrt{\frac{\delta^2 \rr(\Sigma) (\log \rr(\Sigma) + t)}{N}}, \frac{\rr(\Sigma)(\log \rr(\Sigma) + t) \log N}{N}  \right).\label{missing_off_bound}
		\end{align}
		Integrating this bound (see e.g. Theorem~2.3 in \cite{boucheron2013concentration}) we easily get
		\[
		\E \| \Off(\hat{\Sigma}^{(\delta)}) - \E \Off(\hat{\Sigma}^{(\delta)}) \| \lesssim \| \Sigma \| \max\left( \sqrt{\frac{\delta^2 \rr(\Sigma) \log \rr(\Sigma)}{N}}, \frac{\rr(\Sigma) \log \rr(\Sigma) \log N}{N}  \right).
		\]
		Finally, we apply Theorem~\ref{Adam08:thm} to the set of functions $\mathcal F$ indexed by \( \gamma \in S^{n - 1} \) and defined by
		\[
		f_{\gamma}(X_i) = \gamma^{\T} \Off(Y_i Y_i^{\T}) \gamma,
		\]
		so that \( Z_{\mathcal F} = N \| \Off(\hat{\Sigma}^{(\delta)}) - \E \Off(\hat{\Sigma}^{(\delta)}) \|   \) in \eqref{adam_talagrand_Z}. Then, by Lemma~\ref{norm} we have \( \sigma^{2} \lesssim \delta^{2} N \| \Sigma \|^2 \) and by Lemma~\ref{trace} \( \| \max_{i} \sup_{f} |f(X_i)| \|_{\psi_1} = \| \max_{i} \| \Off(Y_i Y_i^{\T}) \| \|_{\psi_1} \lesssim \tilde{\mathbf{r}}(\Sigma) \| \Sigma \| \log N  \), so that with probability {at least} \( 1 - e^{-t} \) for \( t \geq 1, \)
		\begin{align*}
		\| \Off(\hat{\Sigma}^{(\delta)}) - \E \Off(\hat{\Sigma}^{(\delta)}) \| & \lesssim  \E \| \Off(\hat{\Sigma}^{(\delta)}) - \E \Off(\hat{\Sigma}^{(\delta)}) \| + \delta \| \Sigma \| \sqrt{ \frac{t}{N} } + \| \Sigma \| \frac{\tilde{\mathbf{r}}(\Sigma) t \log N}{N} \\
		& \lesssim 
		\| \Sigma \| \max\left( \sqrt{\frac{\delta^2\rr(\Sigma) \log \rr(\Sigma)}{N}}, \sqrt{\frac{\delta^{2} t}{N}}, \frac{\rr(\Sigma)(\log \rr(\Sigma) + t) \log N}{N}  \right).
		\end{align*}
		
		We proceed with the diagonal term. Applying Proposition~\ref{bernbound} to the sum \( N\Diag(\hat{\Sigma}^{(\delta)}) = \sum_{i = 1}^{N} \Diag(Y_{i} Y_{i}^{\T}) \) with \( R = CN \delta \tr(\Sigma) \Diag(\Sigma) \) we have \( \rr(R) \lesssim \rr(\Sigma) \) and \( \| R \| \lesssim N \delta \tr(\Sigma) \| \Sigma\| \). Therefore, with probability at least $1 - e^{-t}$ we have
		\begin{equation}
		\| \Diag(\hat{\Sigma}^{(\delta)}) - \E \Diag(\hat{\Sigma}^{(\delta)}) \|  \lesssim 
		\| \Sigma \| \max\left( \sqrt{\frac{ \delta \rr(\Sigma) (\log \rr(\Sigma) + t)}{N}}, \frac{\rr(\Sigma)(\log \rr(\Sigma) + t) \log N}{N}  \right). \label{missing_diag_bound}
		\end{equation}
		Again, integrating this inequality we get
		\[
		\E \| \Diag(\hat{\Sigma}^{(\delta)}) - \E \Diag(\hat{\Sigma}^{(\delta)}) \| \lesssim
		\| \Sigma \| \max\left( \sqrt{\frac{ \delta \rr(\Sigma) \log \rr(\Sigma) }{N}}, \frac{\rr(\Sigma) \log \rr(\Sigma) \log N}{N}  \right).
		\]
		We have  \( \E ( u^{\T} \Diag(Y_i Y_i^\T) u )^2 \lesssim \delta \| \Sigma \|^2 \) and  \(  \| \max_{i} \| \Off(Y_i Y_i^{\T}) \| \|_{\psi_1} \lesssim \tilde{\mathbf{r}}(\Sigma) \| \Sigma \| \log N \) by Lemma~\ref{norm} and Lemma~\ref{trace} respectively. By  Theorem~\ref{Adam08:thm} we have with probability at least \( 1 - e^{-t} \),
		\begin{align*}
		\| \Diag(\hat{\Sigma}^{(\delta)}) - \E \Diag(\hat{\Sigma}^{(\delta)}) \| & \lesssim  \E \| \Diag(\hat{\Sigma}^{(\delta)}) - \E \Diag(\hat{\Sigma}^{(\delta)}) \| +  \| \Sigma \| \sqrt{ \frac{\delta t}{N} } + \| \Sigma \| \frac{\tilde{\mathbf{r}}(\Sigma) t \log N}{N} \\
		& \lesssim 
		\| \Sigma \| \max\left( \sqrt{\frac{\delta\rr(\Sigma) \log \rr(\Sigma)}{N}}, \sqrt{\frac{\delta t}{N}}, \frac{\rr(\Sigma)(\log \rr(\Sigma) + t) \log N}{N}  \right).
		\end{align*}
		Finally, we combine the off-diagonal and diagonal bounds via the triangle  inequality and get
		\[
		\| \hat{\Sigma} - \Sigma \| \leq \delta^{-2} \| \Off(\hat{\Sigma}^{(\delta)}) - \E \Off(\hat{\Sigma}^{(\delta)}) \| + \delta^{-1} \| \Diag(\hat{\Sigma}^{(\delta)}) - \E \Diag(\hat{\Sigma}^{(\delta)}) \| \, .
		\]
	\end{proof}
	
	\section*{Acknowledgement}
	We are indebted to Rados\l{}aw Adamczak for his very useful feedback at several stages of this paper. We are especially grateful for his suggestion to study the question of Marton in the Ising model and for providing us with an important example in Section \ref{logarithm}.
	
	\bibliography{mybib}
	
	\appendix
	
	\section{An approximation argument for non-smooth functions}\label{smooth_approx:section}
	
	In order to apply the logarithmic Sobolev assumption \eqref{log-sobol_assume} rigorously we need to take smooth approximations of the function 
	\[ 
	Z_{\AA}(X) = \sup_{A \in \AA} (X^{\T} A X - \E X^{\T} A X ) .
	\] 
	Notice that we have,
	\[
	|Z_{\AA}(X) - Z_{\AA}(Y)| \leq \| X - Y \| \left( \sup_{A \in \AA} \| A X \| + \sup_{A \in \AA} \| A Y \|  \right).
	\]
	The following simple lemma shows how to apply the logarithmic Sobolev inequality to non-differentiable functions.
	
	\begin{lemma}
		Suppose that \( X \) satisfies Assumption~{\ref{logSobolev}}.  Let \( f :\R^{n} \rightarrow \R \) be such that 
		\[
		|f(x) - f(y)| \leq |x - y| \max(L(x), L(y)),
		\]
		for some continuous non-negative function \( L(x) \). Then for some absolute constant \( C > 0 \) and any \( \lambda \in \R \) it holds
		\[
		\Ent(e^{\lambda f}) \leq C K^{2} \lambda^{2} \E L(x)^{2} e^{\lambda f}
		\]
	\end{lemma}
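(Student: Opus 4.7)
The plan is to approximate $f$ by a smooth mollification and apply Assumption~\ref{logSobolev} to the approximation. Let $\varphi_{\varepsilon}$ be a standard mollifier (say, the density of a centered Gaussian with covariance $\varepsilon^{2} I_{n}$) and set $f_{\varepsilon} = f * \varphi_{\varepsilon}$, which is $C^{\infty}$. Defining $L_{\varepsilon} = L * \varphi_{\varepsilon}$, the first step is to show $\| \nabla f_{\varepsilon}(x) \| \leq 2 L_{\varepsilon}(x)$. Indeed, by the hypothesis and the triangle inequality,
\[
| f_{\varepsilon}(x + h) - f_{\varepsilon}(x) | \leq \int | f(x + h - y) - f(x - y) | \varphi_{\varepsilon}(y) \, dy \leq | h | \int \bigl(L(x + h - y) + L(x - y)\bigr) \varphi_{\varepsilon}(y) \, dy,
\]
and dividing by $|h|$, letting $h \to 0$, and using continuity of $L$ (together with dominated convergence) gives the claimed gradient bound.

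Next, I would apply Assumption~\ref{logSobolev} to $g = e^{\lambda f_{\varepsilon}/2}$, which is smooth. Since $\| \nabla g \|^{2} = (\lambda^{2}/4) e^{\lambda f_{\varepsilon}} \| \nabla f_{\varepsilon} \|^{2}$, we obtain
\[
\Ent(e^{\lambda f_{\varepsilon}(X)}) \leq \frac{K^{2} \lambda^{2}}{2} \E \| \nabla f_{\varepsilon}(X) \|^{2} e^{\lambda f_{\varepsilon}(X)} \leq 2 K^{2} \lambda^{2} \E L_{\varepsilon}(X)^{2} e^{\lambda f_{\varepsilon}(X)}.
\]

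To finish, one passes to the limit $\varepsilon \to 0$. Continuity of $L$ and $f$ gives pointwise convergence $L_{\varepsilon}(x) \to L(x)$ and $f_{\varepsilon}(x) \to f(x)$; under the standing finiteness convention of Assumption~\ref{logSobolev} (both sides of \eqref{log-sobol_assume} finite), dominated convergence applied to the right-hand side and Fatou's lemma combined with continuity of $y \mapsto y \log y$ on the left-hand side yield the claim with $C = 2$. The main obstacle is producing an $\varepsilon$-uniform integrable envelope for $L_{\varepsilon}(X)^{2} e^{\lambda f_{\varepsilon}(X)}$ and for $e^{\lambda f_{\varepsilon}(X)}\log^{+} e^{\lambda f_{\varepsilon}(X)}$. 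This is straightforward in the intended application to $f = Z_{\AA}$ and $L(x) = 2 \sup_{A \in \AA} \| A x \|$, since by Jensen's inequality $f_{\varepsilon}(x)$ and $L_{\varepsilon}(x)$ are bounded, uniformly in $\varepsilon \in (0, 1]$, by affine functions of $|f(x)|$ and $L(x)$, and $X$ has subgaussian concentration.
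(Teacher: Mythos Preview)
Your approach is correct in spirit and follows the same mollify--apply log-Sobolev--pass to the limit strategy as the paper, but the implementation differs in two technical choices. The paper mollifies $F = e^{\lambda f/2}$ directly with a \emph{compactly supported} radial kernel $\phi_m$, and bounds $\|\nabla F_m(x)\|$ via the symmetry $\nabla\phi(u) = -\nabla\phi(-u)$, obtaining an upper bound in terms of the local suprema $\widetilde F_m(x) = \sup_{\|y-x\|\le 1/m} e^{\lambda f(y)/2}$ and $L_m(x) = \sup_{\|y-x\|\le 1/m} L(y)$, which converge pointwise to $F$ and $L$ by continuity. You instead mollify $f$ itself with a Gaussian and control $\|\nabla f_\varepsilon\|$ by the convolution $L_\varepsilon = L*\varphi_\varepsilon$.

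Your route is arguably cleaner for the gradient bound, but the Gaussian kernel forces an implicit growth condition on $L$ (and hence on $f$) just to make $f*\varphi_\varepsilon$ and $L*\varphi_\varepsilon$ well-defined; with the lemma stated for arbitrary continuous $L$, this is a genuine restriction. The paper's compactly supported kernel sidesteps this entirely at the mollification stage. You correctly flag the remaining limit issue (uniform integrable envelopes) and note it is harmless for the intended application $L(x)=2\sup_{A\in\AA}\|Ax\|$; the paper glosses over the same limit step. If you want your argument to match the generality of the lemma as stated, switch to a compactly supported mollifier.
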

	\begin{proof}
		Set \( h(x) = x^{2}(1 - x)_{+}^{2} \) and consider the smoothing kernel supported on {the} unit ball defined by
		\[
		\phi(u) = \frac{1}{Z_h} h(\| u \|^{2}),
		\qquad
		Z_h = \int h(\| u \|^2) du = S_{n - 1} \int_{ 0}^{\infty} h(r^2) dr,
		\]
		where \( S_{n-1} \) is a surface area of the unit sphere in \( \R^{n} \).
		Note that since \( \phi \) is radial, \( \nabla \phi(u) = - \nabla \phi(-u) \) and also,
		\[
		\int \| u \| \| \nabla \phi(u) \| du = \frac{2 S_{n-1}}{Z_h} \int_{0}^{\infty} r^{2} |g'(r)| dr = \frac{ 2 \int_{0}^{\infty} r^{2} |h'(r)| dr }{ \int_{ 0}^{\infty} h(r^2) dr } = C_{h}.
		\]
		Setting \( \phi_{m}(u) = m^{-1} \phi(u / m) \) for $m \in \mathbb{N}$ we have \( \nabla \phi_{m}(u) = m^{-2} (\nabla \phi)( u /m ) \). Therefore, we have
		\[  
		\int \| u \| \| \nabla \phi_{m}(u) \| du = \int \left\| \frac{u}{m} \right\| \left\| (\nabla \phi)\left( \frac{u}{m}\right) \right\| d\frac{u}{m} = C_{h}.
		\] 
		
		Take \(F(x) = e^{\lambda f(x)/2} \) and let us consider a sequence of smooth approximations
		\(
		F_{m}(x) = \int \phi_{m}\left(x - u\right) F(u) du,
		\)
		so that \( F_{m}(x) \) tends to \( F \) pointwise. This is possible due to the fact that \( F \) is continuous. Moreover, we have due to the symmetry
		\begin{align*}
		\nabla F_{m}(x) &=  \int (\nabla \phi_{m})(x - u) F(u) du = 
		\int  (\nabla \phi_{m})(u) F(x - u) du \\
		& = 
		\frac{1}{2} \int (\nabla \phi_{m})(u)[F(x - u) - F(x + u)] du.
		\end{align*}
		Since \( \phi_{m}(u) \) vanishes for \( \| u \| \geq 1/m \), we have
		\begin{align*}
		\| \nabla F_{m}(x) \| &\leq \frac{1}{2} \sup_{\| u \| \leq m^{-1}} \frac{|F(x - u) - F(x + u)|}{\| u \|} \int \| u \| \| \nabla \phi_{m}(u) \| du 
		\\
		&= C_{h} \sup_{\| u \| \leq m^{-1}} \frac{|F(x - u) - F(x + u)|}{2 \| u \|}.
		\end{align*}
		It is easy to see that
		\[
		|F(x) - F(y)| = |e^{\lambda f(x)/2} - e^{\lambda f(y)/2}| \leq \frac{\lambda}{2} \| x - y\| \max(e^{\lambda f(x)/2}, e^{\lambda f(y)/2}) \max(L(x), L(y)),
		\]
		and therefore,
		\[
		\| \nabla F_{m}(x) \| \leq \frac{\lambda C_{h}}{2} \widetilde{F}_{m}(x) \times L_{m}(x),
		\]
		where we set \( L_{m}(x) = \sup_{y \, : \, \|x-y\| \leq m^{-1} } L(y) \) and \( \widetilde{F}_{m}(x) = \sup_{\| x - y \| \leq m^{-1}} e^{\lambda f(y) /2} \) that tend pointwise to \( L(x) \) and \( F(x) \), respectively, as \( m \rightarrow \infty\). Since each function \( f_{m} \) is smooth, we have by Assumption~\ref{logSobolev} that
		\[
		\Ent(F_{m}^{2}) \leq K^{2} \E \| \nabla F_{m}(x) \|^{2} \leq \frac{\lambda^{2} C_{h}^{2}}{4} K^{2} \E L_{m}^{2}(x) \widetilde{F}_{m}(x)^{2}.
		\]
		Taking the limit \( m \rightarrow \infty \) we prove the the required inequality.
	\end{proof}

\end{document}